\theoremstyle{definition}
\theoremstyle{plain}
\date{}
\newtheorem{Thm}{Theorem}[section]
\newtheorem{Prop}[Thm]{Proposition}
\newtheorem{Lemma}[Thm]{Lemma}
\newcommand{\p}{\partial}
\newcommand{\dis}{\displaystyle}
\newcommand{\norm}{\parallel}
\newcommand{\Q}{{\mathbb Q}}
\newcommand{\N}{{\mathbb N}}
\newcommand{\R}{{\mathbb R}}
\newcommand{\ep}{\varepsilon }
\newcommand{\2}{\frac{1}{2} }
\newcommand{\wto}{\rightharpoonup}
\newcommand{\weakto}{\rightharpoonup}
\def\text#1{\mbox{#1 }}
\title{\bf Existence of global weak solutions of inhomogeneous incompressible Navier-Stokes equations with mass diffusion}
\author{Eliott Kacedan
\footnote{Department of Mathematics, Faculty of Science and Technology, Keio University, 3-14-1 Hiyoshi, Kohoku-ku, Yokohama, 223-8522, Japan. E-mail:  eliott.kacedan@keio.jp}
  \, and  Kohei Soga
\footnote{Department of Mathematics, Faculty of Science and Technology, Keio University, 3-14-1 Hiyoshi, Kohoku-ku, Yokohama, 223-8522, Japan. E-mail:  soga@math.keio.ac.jp 
}}
\begin{document}
\maketitle
\begin{abstract} 
\noindent  This paper proves existence of a global weak solution  to the inhomogeneous (i.e., non-constant density) incompressible Navier-Stokes system with mass diffusion. The system is well-known as  the Kazhikhov-Smagulov model. The major novelty of the paper is to deal with the Kazhikhov-Smagulov model possessing the non-constant viscosity without any simplification of higher order nonlinearity.  Any global weak solution is shown to have a long time behavior that is consistent with  mixing phenomena of miscible fluids. The results also contain a new compactness method of Aubin-Lions-Simon type.    

\medskip

\noindent{\bf Keywords:}  inhomogeneous incompressible Navier-Stokes equations; Kazhikhov-Smagulov model; weak solution  \medskip

\noindent{\bf AMS subject classifications:}  
 35Q30; 35D30; 76D05
\end{abstract}
%%%%%%%%%%%%%%%%%%%
\setcounter{section}{0}
\setcounter{equation}{0}
\section{Introduction}
We consider the following inhomogeneous (i.e., non-constant density) incompressible Navier-Stokes system with mass diffusion, which is a version of the well-known Kazhikhov-Smagulov model: 
\begin{eqnarray}\label{NS1}
 &&\left \{
\begin{array}{lll}
&\p_t \rho+ v\cdot \nabla\rho = \theta \Delta \rho \mbox{\quad  in $(0,\infty)\times\Omega $,}
\medskip\\
&\dis \p_t(\rho v)+\sum_{j=1}^3\p_{x_j} (\rho v_j v)
 -\nabla\cdot \{ \mu(\rho)(\nabla v+{}^{\rm t}\!(\nabla v)) \}   \\
&\quad   -\theta v\Delta \rho-\theta(v\cdot\nabla)\nabla\rho-\theta(\nabla\rho\cdot\nabla)v+2\theta \nabla\cdot\{\mu(\rho) \nabla\nabla(\log\rho)\}\\
&\dis \quad +\theta^2\Big\{\frac{\Delta\rho\nabla\rho}{\rho}
+  \frac{(\nabla\rho\cdot\nabla)\nabla\rho}{\rho} - \frac{|\nabla\rho|^2 \nabla\rho}{\rho^2}  \Big\} =  -\nabla p+\rho f  \mbox{\quad in $(0,\infty)\times\Omega $,}
\medskip\\
&\nabla\cdot v=0\mbox{\quad in $(0,\infty)\times\Omega$,} 
\medskip\\
&\nabla\rho\cdot\nu=0,\quad  v=0\mbox{\quad on $(0,\infty)\times\partial \Omega$},
\medskip\\
&\rho(0,\cdot)=\eta,\quad v(0,\cdot)=u\mbox{\quad in $\Omega$},
\end{array}
\right.
\end{eqnarray}
where 
\begin{itemize}
\item $\Omega\subset\R^3$ is a bounded connected open set with the smooth boundary $\p\Omega$ and its unit outer normal $\nu=\nu(x)$, 
\item $\rho=\rho(t,x)$, $v=v(t,x)=(v_1(t,x),v_2(t,x),v_3(t,x))$, $p=p(t,x)$ are the unknown density, velocity, pressure, respectively,     
\item $\mu:[0,\infty)\to(0,\infty)$ is the viscosity (a given function) depending only on the density and $\theta>0$ is the constant mass diffusivity,
\item $f=f(t,x)=(f_1(t,x),f_2(t,x),f_3(t,x))$ is a given external force and $\eta,u$ are initial data, 
\item   $\nabla :=(\p_{x_1},\p_{x_2},\p_{x_3})$, $\Delta:=\p_{x_1}^2+\p_{x_2}^2+\p_{x_3}^3$,  $\nabla v$ is the Jacobian matrix of $v$ and $x\cdot y:=x_1y_1+x_2y_2+x_3y_3$ for $x=(x_1,x_2,x_3)$ and $y=(y_1,y_2,y_3)$. 
\end{itemize}
The system with $\theta=0$, i.e., the flow without mass diffusion, is the standard inhomogeneous incompressible Navier-Stokes equations  (in that case, the Neumann boundary condition of $\rho$ is not necessary) and there is large literature on this problem: see \cite{AK}, \cite{Kazhikhov}, \cite{JL}, \cite{Kim}, \cite{Simon}, \cite{AKM}, \cite{Lions} and \cite{DM}.  

We briefly explain the origin of \eqref{NS1}.  Consider the general compressible viscous Navier-Stokes system: 
\begin{eqnarray}\label{NSc}
&&\left \{
\begin{array}{lll}
&\p_t \rho+\nabla\cdot (\rho V)=0,\\
&\rho\{\p_t V+(V\cdot\nabla)V\}-\nabla \Big\{\Big(\chi-\frac{2}{3}\mu\Big)(\nabla\cdot V))   \Big\}
-\nabla\cdot \{ \mu(\nabla V+{}^{\rm t}\!(\nabla V)) \}\\
&\quad =-\nabla q+\rho f,
\end{array}
\right.
\end{eqnarray}
where $\chi$ and $\mu$ are not necessarily constant. As constitutive laws (its physical interpretation should be the central issue in modeling), we suppose that 
\begin{eqnarray}\label{con}
\nabla\cdot V=-\theta\Delta(\log \rho)
%=-\theta \nabla \cdot \Big(\frac{\nabla \rho}{\rho}\Big)
,\quad \theta>0\mbox{ is  constant},\quad 
 \chi,\,\,\mu\mbox{ are functions of $\rho$.}
\end{eqnarray}
Introducing the new variable $v:=V+\theta\nabla(\log\rho)$
and rewriting the compressible system \eqref{NSc} with $v$, we obtain \eqref{NS1}, where a term described as the gradient of a scalar function is included in the pressure.  

Kazhikhov-Smagulov \cite{KS} (see also \cite{AKM}) demonstrated such modeling for the case with the constant viscosity and obtained weak solutions by neglecting the $\theta^2$-nonlinearity, where the system with the constant viscosity (after the above transformation) is given as 
\begin{eqnarray*}
&&\left \{
\begin{array}{lll}
&\p_t \rho+v\cdot \nabla\rho =\theta\Delta\rho,\\
& \dis \rho(\p_t v+(v\cdot\nabla)v)-\mu \Delta v-\theta(v\cdot\nabla)\nabla\rho-\theta(\nabla\rho\cdot\nabla)v\\
&\dis  +\theta^2\Big\{\frac{\Delta\rho\nabla\rho}{\rho}
+  \frac{(\nabla\rho\cdot\nabla)\nabla\rho}{\rho} - \frac{|\nabla\rho|^2 \nabla\rho}{\rho^2}  \Big\}
=-\nabla p+\rho f , 
\end{array}
\right.
\end{eqnarray*}
or equivalently 
\begin{eqnarray}\label{NS0}
&&\left \{
\begin{array}{lll}
&\p_t \rho+v\cdot \nabla\rho =\theta\Delta\rho,\\
& \dis\dis \p_t(\rho v)+\sum_{j=1}^3\p_{x_j} (\rho v_j v) 
-\mu \Delta v -\theta v\Delta \rho-\theta(v\cdot\nabla)\nabla\rho-\theta(\nabla\rho\cdot\nabla)v\\
&\dis  +\theta^2\Big\{\frac{\Delta\rho\nabla\rho}{\rho}
+  \frac{(\nabla\rho\cdot\nabla)\nabla\rho}{\rho} - \frac{|\nabla\rho|^2 \nabla\rho}{\rho^2}  \Big\}
=-\nabla p+\rho f. 
\end{array}
\right.
\end{eqnarray}
Beir\~ao da Veiga \cite{B} proved existence of local in time strong solutions of \eqref{NS0} with the $\theta^2$-nonlinearity;  Secchi \cite{S} obtained global in time strong solutions of \eqref{NS0} in the $2$-dimensional space and demonstrated asymptotics of strong solutions as $\theta\to0$. Cabrales et. al. \cite{CGG} showed existence of weak solutions of \eqref{NS0} including the $\theta^2$-nonlinearity via a numerical scheme.  Bresch et. al.  \cite{BHM} and Calgaro et. al. \cite{CEZ}  derived versions of  \eqref{NS0} without the $\theta^2$-nonlinearity by specific choice of the constants $\mu$ and $\theta$.  
Cook-Dimotakis \cite{CD} investigated \eqref{NSc} with the constant viscosity and \eqref{con} to understand Rayleigh-Taylor instability between miscible fluids. Goudon-Vasseur \cite{GV} re-analyzed Kazhikhov-Smagulov type modeling of mixture flows from a more microscopic viewpoint to propose  a generalized model.  

In the literature of mathematical analysis of  weak solutions to Kazhikhov-Smagulov type system, $L^2$-energy estimates (i.e.,  $\int_\Omega$\eqref{NS0}$\,\cdot\,v$) play the central role. In this argument, one can estimate the $\theta$-nonlinearity from above by $\frac{\theta}{2}(M-m)\norm \nabla v\norm_{L^2(\Omega)}^2$ as long as $0<m\le\rho\le M$ (see Chapter  3, Section 4 of \cite{AKM}), which should not dominate  $\mu\norm \nabla v\norm_{L^2(\Omega)}^2$ coming from $\int_\Omega\mu\Delta v\cdot v$. Neglecting the $\theta^2$-nonlinearity in \eqref{NS0}, Kazhikhov-Smagulov \cite{KS} obtained weak solutions of \eqref{NS0} under the condition 
\begin{eqnarray}\label{con2}
\frac{\theta}{2}(M-m)<\mu.
\end{eqnarray}
If we keep the $\theta^2$-nonlinearity,  it is estimated from above by 
$$C_1\theta^2 \norm  \Delta\rho\norm_{L^2(\Omega)}\norm \nabla v\norm_{L^2(\Omega)}\mbox{ \quad ($C_1$ is some constant)},$$
while the estimate of $ \norm  \Delta\rho\norm_{L^2(\Omega)}$ contains  $C_2\theta^{-1} \norm \nabla v\norm_{L^2(\Omega)}$ ($C_2$ is some constant). Hence, if \eqref{con2} is tighten to be $\theta\ll\mu$, one can still obtain  weak solutions of \eqref{NS0} with the  $\theta^2$-nonlinearity.  
    
If $\mu$ is not constant, the situation changes:  the $\theta$-nonlinearity has one more term $2\theta \nabla\cdot\{\mu(\rho) \nabla\nabla(\log\rho)\}$. 
If $\mu$ is not smooth, the $L^2$-estimate for $\int_\Omega(2\theta \nabla\cdot\{\mu(\rho) \nabla\nabla(\log\rho)\})\,\cdot\, v$  yields 
$$\theta(\max\mu|_{[m,M]})\norm  \Delta\rho\norm_{L^2(\Omega)}\norm \nabla v\norm_{L^2(\Omega)},$$
 which leads to $C_3(\max\mu|_{[m,M]})\norm \nabla v\norm_{L^2(\Omega)}^2$ ($C_3$ is some constant) together with  the above mentioned  estimate of $ \norm  \Delta\rho\norm_{L^2(\Omega)}$. 
 Then, we see that this term would dominate $(\min\mu|_{[m,M]})\norm\nabla v\norm_{L^2(\Omega)}^2$ coming from Korn's inequality applied to $(\nabla\cdot \{ \mu(\rho)(\nabla v+{}^{\rm t}\!(\nabla v)) \})\,\cdot\,v$.  Hence,  \eqref{con2}-type condition cannot control the term $2\theta \nabla\cdot\{\mu(\rho) \nabla\nabla(\log\rho)\}$. We propose an idea and hypothesis to overcome this trouble (later, we will state it more precisely): {\it Suppose that $\mu$ is $C^1$-smooth. Then, noting that $v$ is divergence-free, we have    
\begin{eqnarray*}
\nabla\cdot\{\mu(\rho)\nabla\p_{x_i}\log\rho\} &=&   \p_{x_i}\Big[\nabla\cdot \{\mu(\rho)\nabla\log\rho\}\Big]-\nabla\cdot\{\p_{x_i}(\mu(\rho))\nabla\log\rho\},\\
\int_\Omega \{\nabla \cdot(\mu(\rho)\nabla\nabla\log\rho)\}\cdot vdx&=&\sum_{i,j=1}^3\int_\Omega  
 \frac{\mu'(\rho)}{\rho}(\p_{x_i}\rho)(\p_{x_j}\rho)(\p_{x_j}v_i)dx.
%&=&\sum_{i=1}^3\int_\Omega\nabla \cdot(\mu(\rho)\nabla\p_{x_i}\log\rho)v_idx\\
%&=& \sum_{i,j=1}^3\int_\Omega  \frac{\mu'(\rho)}{\rho}(\p_{x_i}\rho)(\p_{x_j}\rho)(\p_{x_j}v_i)dx.
 \end{eqnarray*}
 If $\mu(\rho)$ satisfies $\theta\ll\mu|_{[m,M]}$ and $\mu'|_{[m,M]}\ll\mu|_{[m,M]}$,  
%\begin{eqnarray*}
%\theta\ll\mu,\quad   \max_{m\le\rho\le M}\mu'(\rho)\ll\mu, 
%\end{eqnarray*}
the $\theta,\theta^2$-nonlinearities in \eqref{NS1} are controllable.}  

In the literature of inhomogeneous incompressible  Navier-Stokes system including Kazhikhov-Smagulov type models, Galerkin type approximation is very common to construct a weak solution. To prove convergence of the approximation, one needs $L^2$-strong convergence of the approximate velocity, which is often done by showing some equi-continuity of the approximate velocity with respect to the time variable (see, e.g., \cite{JL}).  This issue is complicated, as the time-dependency of the velocity appears from the Navier-Stokes part in the form of the time derivative of [density]$\times$[velocity].  

In this paper, we demonstrate a procedure with time-discretization and iterative treatment of linear elliptic problems, which are easily solved by the standard Lax-Milgram framework. We propose a new compactness argument for $L^2$-strong convergence of the approximate velocity, which is a version (with continuous variables) of the technique developed in the work Soga \cite{Soga2023} on mathematical analysis of a fully discrete finite difference method for inhomogeneous incompressible Navier-Stokes system without mass diffusion. The basic idea is to modify the standard Aubin-Lions lemma applied to the velocity so that the weak norm of velocity is replaced by that of [density]$\times$[velocity]; then, one can immediately evaluate the weak norm by inserting the Navier-Stokes part. More precisely, we will prove and apply the following technical lemmas:        
{\it 
Consider two sequences $\{\rho_k\}_{k\in\N}$ and $\{ v_k \}_{k\in\N}$ such that
\begin{itemize}
\item[(A1)] $\rho_k\in L^\infty([0,T];L^\infty(\Omega))$, $0<\alpha\le\rho_k(t,x)\le \beta$\,\,\, a.e. $(t,x)$ for all $k$. 
\item[(A2)] $\{\rho_k\}_{k\in\N}$, $\rho_k:[0,T]\to L^2(\Omega)$ is weakly equi-continuous in the sense that for each $\phi\in C^\infty_0(\Omega)$, $\{(\rho_k,\phi)_{L^2(\Omega)}\}_{k\in\N}$ is equi-continuous on $[0,T]$. 
\item[{(A3)}]  There exists $\rho\in L^\infty([0,T];L^\infty(\Omega))$ such that $\alpha\le\rho\le \beta$ a.e. and $\rho_k(t,\cdot)\weakto\rho(t,\cdot)\mbox{ in $L^2(\Omega)$ as $k\to\infty$ for each $t\in[0,T]$}$. 
\item[(A4)] $v_k\in L^2([0,T];H^1_{0,\sigma}(\Omega))$ and $v_k(t,\cdot)$ has a value  in  $H^1_{0,\sigma}(\Omega)$ for every $t\in[0,T]$.  
%In that case, using Ascoli-Arzela type arguments, one can find a subsequence of $\{\rho_k\}_{k\in\N}$ which has the pointwise weak convergence property. In our case with time-discretization, we use a kind of  weak equi-continuity for step functions. 
%\item $\{\rho_kv_k\}_{k\in\N}$ is such that $\rho_kv_k\neq\rho_lv_l$ if $k\neq l$. 
\end{itemize}
%If  $\{\rho_kv_k\}_{k\in\N}$ consists of infinitely many different elements, one can find its subsequence 
%$$\{\tilde{\rho}_k\tilde{v}_k\}_{k\in\N}:=\bigcup_{l\in\N}\{\rho_kv_k\}_{k\ge {l+1}}\setminus\{ \rho_lv_l\} .$$
%that satisfies the last condition; if $\{\rho_kv_k\}_{k\in\N}$ consists of a finite number of elements, it is out of our interest.    
 \begin{Lemma}\label{key-lemma1}
 If $\{\rho_k\}_{k\in\N}$ satisfies (A1) and (A2), then there exists a subsequence of $\{\rho_k\}_{k\in\N}$, still denoted by $\{\rho_k\}_{k\in\N}$, that satisfies (A3).
\end{Lemma}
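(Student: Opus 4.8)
The plan is to prove this as a diagonal-argument version of a Banach--Alaoglu / Helly-type selection theorem. The essential structure is that (A1) gives uniform boundedness in $L^2(\Omega)$ at every time $t$, while (A2) gives a weak-in-space, uniform-in-$k$ modulus of continuity in $t$; together these force precompactness in a suitable topology, and the limit inherits the pointwise-in-$t$ weak convergence claimed in (A3). First I would fix a countable set $D=\{\phi_n\}_{n\in\N}\subset C^\infty_0(\Omega)$ that is dense in $L^2(\Omega)$ (possible since $\Omega$ is bounded and open, so $C^\infty_0(\Omega)$ is dense in $L^2(\Omega)$ and separable in the $L^2$-norm). For each fixed $\phi_n$, the family of real-valued functions $t\mapsto (\rho_k,\phi_n)_{L^2(\Omega)}$ is, by (A2), equi-continuous on $[0,T]$, and by (A1) and Cauchy--Schwarz it is uniformly bounded by $\beta\,|\Omega|^{1/2}\,\|\phi_n\|_{L^2(\Omega)}$. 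Hence by Arzel\`a--Ascoli this family is precompact in $C([0,T])$.

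Next I would run a diagonal extraction: apply Arzel\`a--Ascoli to $\phi_1$ to get a subsequence along which $(\rho_k,\phi_1)_{L^2(\Omega)}$ converges in $C([0,T])$; pass to a further subsequence for $\phi_2$; continue, and take the diagonal subsequence, still denoted $\{\rho_k\}$. Along this subsequence, for every $n$ and every $t\in[0,T]$, the limit $g_n(t):=\lim_k (\rho_k(t,\cdot),\phi_n)_{L^2(\Omega)}$ exists. Now fix $t$. The sequence $\{\rho_k(t,\cdot)\}_{k\in\N}$ is bounded in $L^2(\Omega)$ by (A1), so by Banach--Alaoglu (reflexivity of $L^2$) every subsequence has a weakly convergent sub-subsequence; since all such weak limits must pair with each $\phi_n$ to give the common value $g_n(t)$, and $D$ is dense in $L^2(\Omega)$, the weak limit is unique. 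A standard subsequence argument then yields that the full sequence $\rho_k(t,\cdot)$ converges weakly in $L^2(\Omega)$ to some $\rho(t,\cdot)$, with $(\rho(t,\cdot),\phi_n)_{L^2(\Omega)}=g_n(t)$ for all $n$. This defines $\rho(t,\cdot)$ for each $t\in[0,T]$.

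It remains to check that $\rho$ has the regularity asserted in (A3), namely $\rho\in L^\infty([0,T];L^\infty(\Omega))$ with $\alpha\le\rho\le\beta$ a.e. For the pointwise bounds: for fixed $t$, $\rho_k(t,\cdot)\weakto\rho(t,\cdot)$ in $L^2(\Omega)$, and the constraint set $\{w\in L^2(\Omega): \alpha\le w\le\beta \text{ a.e.}\}$ is convex and closed in $L^2(\Omega)$, hence weakly closed; since every $\rho_k(t,\cdot)$ lies in it by (A1), so does $\rho(t,\cdot)$. This gives $\alpha\le\rho(t,x)\le\beta$ for a.e.\ $x$, for every $t$, which in particular gives the uniform-in-$t$ $L^\infty$ bound, so $\rho\in L^\infty([0,T];L^\infty(\Omega))$. (Measurability of $(t,x)\mapsto\rho(t,x)$ as a function on $[0,T]\times\Omega$ follows because $t\mapsto(\rho(t,\cdot),\phi_n)_{L^2(\Omega)}=g_n(t)$ is continuous — being a uniform limit of continuous functions — for each $n$ in a dense set, so $t\mapsto\rho(t,\cdot)$ is weakly measurable into the separable space $L^2(\Omega)$, hence strongly measurable by Pettis.)

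The main obstacle, and the only place needing care, is the passage from ``$(\rho_k(t,\cdot),\phi_n)\to g_n(t)$ for all $n$ in the countable dense set $D$'' to ``$\rho_k(t,\cdot)\weakto\rho(t,\cdot)$ in $L^2(\Omega)$'', i.e.\ testing against \emph{every} $\psi\in L^2(\Omega)$ rather than only against the $\phi_n$. This is where the uniform $L^2$-bound from (A1) is indispensable: given $\psi\in L^2(\Omega)$ and $\varepsilon>0$, choose $\phi_n$ with $\|\psi-\phi_n\|_{L^2(\Omega)}<\varepsilon$, and split $(\rho_k-\rho,\psi)=(\rho_k-\rho,\psi-\phi_n)+(\rho_k-\rho,\phi_n)$; the first term is bounded by $2\beta|\Omega|^{1/2}\varepsilon$ uniformly in $k$ (using the bound on both $\rho_k$ and $\rho$), and the second tends to $0$ as $k\to\infty$. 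Letting $k\to\infty$ then $\varepsilon\to0$ finishes it. Everything else is bookkeeping with the diagonal extraction and the elementary observation that a bounded sequence in $L^2$ with a unique weak subsequential limit converges weakly.
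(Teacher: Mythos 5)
Your proof is correct, but it organizes the diagonal argument dually to the paper's. The paper diagonalizes over a countable dense set of \emph{times} ($\Q\cap[0,T]$): it first extracts weak $L^2(\Omega)$-limits $\rho(s_j,\cdot)$ at each rational time $s_j$, and then uses the weak equi-continuity (A2) by hand, via a $\delta$-partition of $[0,T]$ and a three-term triangle inequality, to show $\{(\rho_k(t,\cdot),\phi)\}_k$ is Cauchy at \emph{every} $t$, before identifying the limit through boundedness and density of $C^\infty_0(\Omega)$. You instead diagonalize over a countable dense set of \emph{test functions} $\{\phi_n\}$, packaging (A1) and (A2) into the hypotheses of the Arzel\`a--Ascoli theorem applied to the scalar functions $t\mapsto(\rho_k(t,\cdot),\phi_n)$ in $C([0,T])$, which hands you convergence at all $t$ simultaneously (indeed uniformly in $t$) for each $\phi_n$; the identification of the weak limit at each fixed $t$ then proceeds exactly as in the paper (unique subsequential weak limit of a bounded sequence). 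The two routes are logically equivalent in difficulty; yours buys a cleaner statement of where equi-continuity enters (it is consumed once, inside Arzel\`a--Ascoli) and incidentally yields continuity of $t\mapsto(\rho(t,\cdot),\phi_n)$, which you exploit for the measurability remark --- a point the paper leaves implicit. Your verification of $\alpha\le\rho\le\beta$ via weak closedness of the convex set $\{\alpha\le w\le\beta\}$ is also slicker than the paper's explicit truncation argument with $\tilde\rho=\min\{\rho-\alpha,0\}$, though both are standard. No gaps.
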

%%%%%%%%%%%%5
\begin{Lemma}\label{key-lemma2}
Suppose that $\{\rho_k \}_{k\in\N}$  and $\{v_k \}_{k\in\N}$ satisfy (A1)--(A4). Then, for each $\lambda>0$, there exists a constant $A_\lambda\ge0$ such that  for all $ t\in[0,T]$ and all $   k,l\in\N$,
\begin{eqnarray}\label{key5555}
&& \norm v_k(t,\cdot)-v_l(t,\cdot) \norm_{L^2(\Omega)^3}\le \lambda(\norm v_k(t,\cdot)\norm_{H^1(\Omega)^3}+\norm v_l(t,\cdot)\norm_{H^1(\Omega)^3} +k^{-1}+l^{-1})\\\nonumber
&&\qquad  +A_\lambda\Big( \sup_{\varphi\in S}\Big|\Big( \rho_k(t,\cdot)v_k(t,\cdot)-\rho_l(t,\cdot)v_l(t,\cdot),\varphi \Big)_{L^2(\Omega)^3}\Big| +k^{-1}+l^{-1} \Big)
 \end{eqnarray}
holds,  where $S$ is the family of all $\varphi\in C^\infty_{0,\sigma}(\Omega)$ with $\norm \varphi\norm_{W^{1,\infty}(\Omega)^3}=1$.
\end{Lemma}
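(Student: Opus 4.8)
The plan is to argue by contradiction, using the compactness of the embedding $H^1(\Omega)\hookrightarrow L^2(\Omega)$. Suppose the conclusion fails: there is $\lambda_0>0$ such that for every $n\in\N$ one can choose $t_n\in[0,T]$ and $k_n,l_n\in\N$ with
\[
\|v_{k_n}(t_n)-v_{l_n}(t_n)\|_{L^2}>\lambda_0\big(\|v_{k_n}(t_n)\|_{H^1}+\|v_{l_n}(t_n)\|_{H^1}+k_n^{-1}+l_n^{-1}\big)+n\Big(\sup_{\varphi\in S}\big|(\rho_{k_n}v_{k_n}(t_n)-\rho_{l_n}v_{l_n}(t_n),\varphi)_{L^2}\big|+k_n^{-1}+l_n^{-1}\Big).
\]
Set $a_n:=\|v_{k_n}(t_n)\|_{H^1}+\|v_{l_n}(t_n)\|_{H^1}+k_n^{-1}+l_n^{-1}>0$ (the terms $k^{-1}+l^{-1}$ serve precisely to keep $a_n>0$, even when both velocities vanish) and put $\tilde v_{k_n}:=v_{k_n}(t_n)/a_n$, $\tilde v_{l_n}:=v_{l_n}(t_n)/a_n$, so that $\|\tilde v_{k_n}\|_{H^1}+\|\tilde v_{l_n}\|_{H^1}\le1$. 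Dividing the displayed inequality by $a_n$ and using $\|\tilde v_{k_n}-\tilde v_{l_n}\|_{L^2}\le\|\tilde v_{k_n}\|_{H^1}+\|\tilde v_{l_n}\|_{H^1}\le1$, the left-hand side is $\le1$; hence, for all $n$,
\[
\sup_{\varphi\in S}\big|(\rho_{k_n}\tilde v_{k_n}-\rho_{l_n}\tilde v_{l_n},\varphi)_{L^2}\big|<1/n,\qquad \|\tilde v_{k_n}-\tilde v_{l_n}\|_{L^2}>\lambda_0 .
\]

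Now I would extract limits. As $[0,T]$ is compact, pass to a subsequence along which $t_n\to t_*\in[0,T]$. Since $\{\tilde v_{k_n}\}$ and $\{\tilde v_{l_n}\}$ are bounded in $H^1_{0,\sigma}(\Omega)$, pass to a further subsequence along which $\tilde v_{k_n}\weakto\bar v$ and $\tilde v_{l_n}\weakto\bar w$ in $H^1_{0,\sigma}(\Omega)$ and, by Rellich's theorem, $\tilde v_{k_n}\to\bar v$ and $\tilde v_{l_n}\to\bar w$ strongly in $L^2(\Omega)^3$; consequently $\|\bar v-\bar w\|_{L^2}\ge\lambda_0>0$. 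To treat the densities at the varying times, fix $\phi\in C^\infty_0(\Omega)$: by the weak equi-continuity (A2) we have $|(\rho_{k_n}(t_n)-\rho_{k_n}(t_*),\phi)_{L^2}|\to0$ (since $t_n\to t_*$), while $(\rho_{k_n}(t_*),\phi)_{L^2}\to(\rho(t_*),\phi)_{L^2}$ by (A3); together with the uniform bound $\|\rho_{k_n}(t_n)\|_{L^2}\le\beta|\Omega|^{1/2}$ coming from (A1) this gives $\rho_{k_n}(t_n)\weakto\rho(t_*)$ in $L^2(\Omega)$, and likewise $\rho_{l_n}(t_n)\weakto\rho(t_*)$. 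For any fixed $\varphi\in S$,
\[
(\rho_{k_n}\tilde v_{k_n},\varphi)_{L^2}=(\rho_{k_n},\tilde v_{k_n}\cdot\varphi)_{L^2}\longrightarrow(\rho(t_*),\bar v\cdot\varphi)_{L^2}=(\rho(t_*)\bar v,\varphi)_{L^2},
\]
because $\rho_{k_n}\weakto\rho(t_*)$ weakly in $L^2(\Omega)$ and $\tilde v_{k_n}\cdot\varphi\to\bar v\cdot\varphi$ strongly in $L^2(\Omega)$ (as $\varphi\in L^\infty(\Omega)^3$ and $\tilde v_{k_n}\to\bar v$ in $L^2$), and similarly $(\rho_{l_n}\tilde v_{l_n},\varphi)_{L^2}\to(\rho(t_*)\bar w,\varphi)_{L^2}$. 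Since $|(\rho_{k_n}\tilde v_{k_n}-\rho_{l_n}\tilde v_{l_n},\varphi)_{L^2}|\le\sup_{\psi\in S}|(\rho_{k_n}\tilde v_{k_n}-\rho_{l_n}\tilde v_{l_n},\psi)_{L^2}|\to0$, passing to the limit yields $(\rho(t_*)(\bar v-\bar w),\varphi)_{L^2}=0$ for all $\varphi\in S$, hence, by homogeneity, for all $\varphi\in C^\infty_{0,\sigma}(\Omega)$.

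It remains to conclude. Put $z:=\bar v-\bar w$; since $H^1_{0,\sigma}(\Omega)$ is weakly closed, $z\in H^1_{0,\sigma}(\Omega)$, and by density of $C^\infty_{0,\sigma}(\Omega)$ in $H^1_{0,\sigma}(\Omega)$ we may choose $\varphi_j\in C^\infty_{0,\sigma}(\Omega)$ with $\varphi_j\to z$ in $H^1(\Omega)^3$. Since $\rho(t_*)z\in L^2(\Omega)^3$ (as $\rho(t_*)\in L^\infty(\Omega)$) and $\varphi_j\to z$ in $L^2(\Omega)^3$, we get
\[
0=(\rho(t_*)z,\varphi_j)_{L^2}\longrightarrow(\rho(t_*)z,z)_{L^2}=\int_\Omega\rho(t_*)|z|^2\,dx .
\]
As $\rho(t_*)\ge\alpha>0$ a.e., this forces $z=0$, contradicting $\|z\|_{L^2}\ge\lambda_0>0$. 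Hence the lemma holds.

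The step I expect to be the main obstacle is the treatment of the varying evaluation times $t_n$ together with the passage to the limit in the bilinear term: one must combine the weak equi-continuity (A2) with the pointwise-in-$t$ weak convergence (A3) to obtain $\rho_{k_n}(t_n,\cdot)\weakto\rho(t_*,\cdot)$, and then exploit the compactness $H^1\hookrightarrow L^2$ (to upgrade the normalized velocities to strong $L^2$-convergence) and the boundedness of the test functions in $L^\infty$ to justify $(\rho_{k_n}\tilde v_{k_n},\varphi)_{L^2}\to(\rho(t_*)\bar v,\varphi)_{L^2}$. Careful handling of the successive subsequences (first in $t_n$, then in $\tilde v_{k_n}$ and $\tilde v_{l_n}$) is also required.
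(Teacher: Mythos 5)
Your overall strategy coincides with the paper's: argue by contradiction, normalize the two velocities by the sum of their $H^1$-norms plus $k^{-1}+l^{-1}$, upgrade to strong $L^2$-convergence via Rellich--Kondrachov, pass to the limit in the weak norm by pairing weakly convergent densities against strongly convergent normalized velocities, and use $\rho\ge\alpha>0$ to force the limiting difference to vanish. (The paper organizes this in two stages --- first a fixed $t$, then uniformity of $A_\lambda$ in $t$ --- whereas you run a single contradiction over $(t_n,k_n,l_n)$; that reorganization is legitimate.)

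There is, however, one genuine gap. When you write ``$(\rho_{k_n}(t_*),\phi)_{L^2}\to(\rho(t_*),\phi)_{L^2}$ by (A3)'', you are applying (A3) --- which asserts $\rho_k(t,\cdot)\weakto\rho(t,\cdot)$ as $k\to\infty$ --- to the subsequence $\{\rho_{k_n}\}_{n}$. This is valid only if $k_n\to\infty$, and nothing in your contradiction hypothesis guarantees that: a priori $\{k_n\}$ (or $\{l_n\}$) could be bounded, in which case, along a further subsequence, $\rho_{k_n}(t_*,\cdot)=\rho_{k_0}(t_*,\cdot)$ for a fixed $k_0$, which need not be weakly close to $\rho(t_*,\cdot)$; your identity $(\rho(t_*)(\bar v-\bar w),\varphi)_{L^2}=0$ then does not follow. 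This is precisely what the $k^{-1}+l^{-1}$ terms in \eqref{key5555} are for --- not merely ``to keep $a_n>0$'' as you assert. In the paper's proof this is the content of the statement $(\ast)$: dividing the contradiction inequality by $a_n$ yields $a_n>n(k_n^{-1}+l_n^{-1})$, so at least one of $\{k_n\},\{l_n\}$ must be unbounded, and if, say, $\{l_n\}$ stays bounded one shows that the corresponding normalized velocity tends to $0$, so its density factor never enters the limit, while (A3) is applied only to the unbounded index sequence. You need to add this case analysis; note that in your moving-time formulation it is more delicate than in the paper's fixed-$t$ stage, because (A4) gives no uniform-in-$t$ bound on $\|v_{l_0}(t_n,\cdot)\|_{H^1(\Omega)^3}$, so the claim that a bounded index forces the normalized velocity to vanish requires an argument. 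The rest of your proof --- the weak--strong pairing, the passage from $\varphi\in S$ to all of $C^\infty_{0,\sigma}(\Omega)$ by homogeneity, and the final positivity step --- is correct and matches the paper.
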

%%%%%%%%%%%
} 
\noindent Note that there would be another version or generalization of Lemma \ref{key-lemma2} in different function spaces (we postpone discussions). We will see that our convergence proof based on Lemma \ref{key-lemma2} is quite similar to the reasoning for the  case with the constant density based on the Aubin-Lions-Simon approach.  

In Section 2, we state the definition of a weak solution of \eqref{NS1} and the main result. In Section 3, we give the time discretization and solve the elliptic problems. In Section 4, we prove the above technical lemmas. In Section 5, we discuss weak/strong convergence of the approximation. In Section 6, we prove the main result.  

%%%%%%%%%%%%%%%%%%%
\setcounter{section}{1}
\setcounter{equation}{0}
\section{Main result}
Let $\Omega\subset \R^3$ be a bounded connected open set with the smooth boundary $\p\Omega$, where the outer unit normal of $\p\Omega$ is denoted by $\nu=\nu(x)$, $x\in\p\Omega$. We use the following notation to indicate function spaces:
\begin{itemize} 
\item $C^r_{0}(\Omega)=C^r_{0}(\Omega;\R)$  is the family of $C^r$-functions $w:\Omega\to\R$ that are equivalently $0$ near $\p\Omega$, 
 $C^r_{0,\sigma}(\Omega):=\{v\in C^r_0(\Omega)^3\,|\,\nabla\cdot v=0\}$,  
\item $L^p(\Omega)=L^p(\Omega;\R)$; $H^r(\Omega)=H^r(\Omega;\R)$, $H^r_0(\Omega)=H^r_0(\Omega;\R)$ is the closure of $C^\infty_0(\Omega)$ with respect to the norm  $\norm \cdot\norm_{H^r(\Omega)}$, 
\item 
%$L^2_{\sigma}(\Omega)$  is the closure of $C^\infty_0(\Omega)$ with respect to the norm  $\norm \cdot\norm_{L^2(\Omega)}$;   
 $H^1_{0,\sigma}(\Omega)$  is the closure of $C^\infty_{0,\sigma}(\Omega)$ with respect to the norm $\norm\cdot\norm_{H^1(\Omega)^3}$,
\item $H^2_N(\Omega):=H^2_N(\Omega;\R)$ is the family of functions $w\in H^2(\Omega)$ such that $\nabla w\cdot \nu=0$ on $\p\Omega$ (in the trace sense).     
\end{itemize}
Throughout this paper, we suppose that 
{\it 
\begin{itemize}
\item[(H1)] $\Omega\subset \R^3$ is a bounded connected open set with the $C^4$-smooth boundary $\p\Omega$.
\item[(H2)]  The external force $f$ belongs to $L^2_{\rm loc}([0,\infty);L^2(\Omega)^3)$.
\item[(H3)] Initial data $\eta,u$ are such that $\eta\in H^1(\Omega)$ with $0<m\le \eta\le M$ and $u\in L^2(\Omega)^3$, where $m,M$ are arbitrarily given fixed constants.
\item[(H4)] The viscosity $\mu:[0,\infty)\to(0,\infty)$ is $C^1$-smooth and satisfies 
\begin{eqnarray*}
&& \mu_\ast-\frac{\theta}{2}(M-m)  - \frac{\tilde{C}_\Omega^2 M^2}{m}(\mu'{}^{\ast} +\theta)  >0,\\
&&\mu_\ast:=\min_{m\le\rho\le M}\mu(\rho), \quad \mu'{}^{\ast}:=\max_{m\le\rho\le M}\mu'(\rho), 
\end{eqnarray*}
where  the diffusivity $\theta>0$ is a constant, 
$\tilde{C}_\Omega:= 1+2C_\Omega+2C_\Omega(1+A_P^2)^\2A_P^\2$, $A_P$ is the constant appearing in Poincar\'e's inequality for $H^1(\Omega)$ and $C_\Omega$ is the constant appearing in \eqref{key} below.  
\end{itemize}}
Before stating the definition of weak solutions of \eqref{NS1}, we observe several equalities in vector calculus.   For any $\rho\in H^2_N(\Omega)\cap C^3(\bar{\Omega};\R)$ with $m\le \rho\le M$ and $v,w\in C^\infty_{0,\sigma}(\Omega)$, we have  
\begin{eqnarray*}
&&-\int_\Omega[ \nabla\cdot \{ \mu(\rho)(\nabla v+{}^{\rm t}\!(\nabla v)) \} ]\cdot wdx  =\sum_{i,j=1}^3 \int_\Omega  \mu(\rho) (\p_{x_j} v_i+\p_{x_i} v_j)  \p_{x_j} w_i dx,\\
&&  \int_\Omega [\nabla\cdot\{\mu(\rho) \nabla\nabla(\log\rho)\}]\cdot wdx
=   -\sum_{i=1}^3\int_\Omega \nabla\cdot[\p_{x_i}\{\mu(\rho)\}\nabla\log\rho]w_idx;
\end{eqnarray*}
for any $\rho\in H^2_N(\Omega)$ with $m\le \rho\le M$, $v\in H^1_{0,\sigma}(\Omega)$ and $w\in C^\infty_{0,\sigma}(\Omega)$, we have (with possibly via smooth approximation of $\rho,v$),   
\begin{eqnarray}\label{div1}
&&\sum_{i,j=1}^3 \int_\Omega  \mu(\rho) (\p_{x_j} v_i+\p_{x_i} v_j)  \p_{x_j} w_i dx\\\nonumber
&&\qquad =\2\sum_{i,j=1}^3\int_\Omega\mu(\rho)(\p_{x_j}v_i+\p_{x_i}v_j)(\p_{x_j}w_i+\p_{x_i}w_j)dx,
\\\label{div2}
&&-\int_\Omega \{v\Delta\rho +(v\cdot\nabla)\nabla\rho+(\nabla\rho\cdot\nabla)v\}\cdot wdx\\\nonumber
&&\qquad =\sum_{i,j=1}^3\int_\Omega\{(\p_{x_i}\rho)v_j(\p_{x_i}w_j)+(\p_{x_j}\rho)v_i(\p_{x_i}w_j)\}dx,
\\\label{div3}
&&-\sum_{i=1}^3\int_\Omega \nabla\cdot[\p_{x_i}\{\mu(\rho)\}\nabla\log\rho]w_idx
= \sum_{i,j=1}^3\int_\Omega\frac{\mu'(\rho)}{\rho}(\p_{x_i}\rho)(\p_{x_j}\rho)(\p_{x_j}w_i)dx,\\\label{div4}
&&\int_\Omega\Big\{\frac{\Delta\rho\nabla\rho}{\rho}
+  \frac{(\nabla\rho\cdot\nabla)\nabla\rho}{\rho} - \frac{|\nabla\rho|^2 \nabla\rho}{\rho^2}  \Big\}\cdot wdx
=-\sum_{i,j=1}^3\int_\Omega\frac{1}{\rho}(\p_{x_i}\rho)(\p_{x_j}\rho)(\p_{x_i}w_j)dx.
\end{eqnarray}

\noindent{\bf Definition.} {\it Let $T>0$ be an arbitrary terminal time. A pair of functions $\rho,v$ is said to be a  weak $[0,T]$-solution of \eqref{NS1}  with initial data  $\eta\in H^1(\Omega)$ such that $m\le\eta\le M$ and $u\in L^2(\Omega)^3$, if 
\begin{itemize}
\item $\rho\in L^2([0,T];H^2_N(\Omega))\cap L^\infty([0,T];H^1(\Omega))$ and $m\le \rho\le M$, 
\item $v\in L^2([0,T];H^1_{0,\sigma}(\Omega))\cap L^\infty([0,T];L^2(\Omega)^3)$, 
\item For any $\phi\in C^\infty([0,T]\times\Omega;\R)$ with {\rm supp}$(\phi)\subset[0,T)\times\Omega$,   
\end{itemize}
\vspace{-3mm}
\begin{eqnarray}\label{weak1}
\dis \int_0^T\int_\Omega\{ -\rho \p_t\phi + (v\cdot \nabla\rho - \theta \Delta \rho)\phi \}dxdt-\int_\Omega \eta\phi(0,\cdot)dx=0,
\end{eqnarray}
\begin{itemize}
\item For any  $\varphi\in C^\infty([0,T]\times\Omega;\R^3)$ with $\nabla\cdot\varphi=0$ and {\rm supp}$(\varphi)\subset[0,T)\times\Omega$,
\end{itemize}
\vspace{-4mm}
\begin{eqnarray}\label{weak2}
&&\int_0^T\int_\Omega\Big[ -\rho v \cdot\p_t\varphi - \sum_{j=1}^3(\rho v_j v)\cdot \p_{x_j}\varphi 
+\sum_{i,j=1}^3\Big\{\2\mu(\rho)(\p_{x_j}v_i+\p_{x_i}v_j)(\p_{x_j}\varphi_i+\p_{x_i}\varphi_j) \\\nonumber
&&\quad +\theta\Big( (\p_{x_i}\rho)v_j(\p_{x_i}\varphi_j)+(\p_{x_j}\rho)v_i(\p_{x_i}\varphi_j)  \Big)
+2\theta\frac{\mu'(\rho)}{\rho}(\p_{x_i}\rho)(\p_{x_j}\rho)(\p_{x_j}\varphi_i)\\\nonumber
&&\quad -\theta^2\frac{1}{\rho}(\p_{x_i}\rho)(\p_{x_j}\rho)(\p_{x_i}\varphi_j)\Big\}-\rho f\cdot\varphi\Big]dxdt
-\int_\Omega\eta u\cdot\varphi(0,\cdot)dx=0.
\end{eqnarray}
A pair of functions $\rho,v$ is said to be a global weak solution of \eqref{NS1}  with initial data  $\eta\in H^1(\Omega)$ such that $m\le\eta\le M$ and $u\in L^2(\Omega)^3$, if 
\begin{itemize}
\item $\rho\in L^2_{\rm loc}([0,\infty);H^2_N(\Omega))\cap L^\infty([0,\infty);H^1(\Omega))$ and $m\le \rho\le M$, 
\item $v\in L^2_{\rm loc}([0,\infty);H^1_{0,\sigma}(\Omega))\cap L^\infty([0,\infty);L^2(\Omega)^3)$, 
\item For any $T>0$, the pair $\rho|_{[0,T]}$, $v|_{[0,T]}$ is a  weak $[0,T]$-solution of \eqref{NS1}.      
\end{itemize}}
\medskip

Here is the main result: 
\begin{Thm}\label{main1}
Suppose  (H1)-(H4). Then, there exists a global weak solution of \eqref{NS1}.  
\end{Thm}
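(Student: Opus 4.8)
The plan is to construct the global weak solution by a time-discretization scheme combined with iterative linearization of elliptic problems, then pass to the limit using the compactness machinery of Lemmas \ref{key-lemma1} and \ref{key-lemma2}. It suffices to produce a weak $[0,T]$-solution for an arbitrary $T>0$, since then a diagonal/exhaustion argument over $T=1,2,3,\dots$ together with uniqueness of the restriction on smaller intervals yields the global solution. Fix $T$ and a time step $\dt=T/N$. On each interval I would solve, for the density, a linear parabolic problem of the type $\rho^{n+1}-\dt\,\theta\Delta\rho^{n+1}+\dt\,v^{n}\cdot\nabla\rho^{n+1}=\rho^{n}$ with the Neumann condition, obtaining $\rho^{n+1}\in H^2_N(\Omega)$ by Lax--Milgram; the maximum principle gives $m\le\rho^{n+1}\le M$ (this is where (H3) and the structure of the transport-diffusion equation are used, after noting $\nabla\cdot v^n=0$). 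For the velocity, I would solve the linear elliptic (in fact, resolvent-type) problem obtained by discretizing $\p_t(\rho v)+\sum_j\p_{x_j}(\rho v_jv)+(\text{viscous and }\theta,\theta^2\text{ terms})=-\nabla p+\rho f$ in the divergence-free space $H^1_{0,\sigma}(\Omega)$, again by Lax--Milgram, using the weak forms \eqref{div1}--\eqref{div4} to express every term as a bounded bilinear form on $H^1_{0,\sigma}(\Omega)$. A possible subtlety is that the convection term $\sum_j\p_{x_j}(\rho v_j v)$ must be handled in its skew-symmetrized form so it does not obstruct coercivity; this is standard.

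The heart of the matter is the \emph{a priori} energy estimate, which must be uniform in $\dt$. Testing the discrete velocity equation against $v^{n+1}$ and using the vector-calculus identities, the viscous term produces (via Korn's inequality) a good term bounded below by $\mu_\ast\|\nabla v^{n+1}\|_{L^2}^2$ (up to constants), while the $\theta$-nonlinearity contributes $\tfrac{\theta}{2}(M-m)\|\nabla v^{n+1}\|_{L^2}^2$ and the terms $2\theta\frac{\mu'(\rho)}{\rho}(\p_{x_i}\rho)(\p_{x_j}\rho)(\p_{x_j}v_i)$ and $\theta^2\frac{1}{\rho}(\p_{x_i}\rho)(\p_{x_j}\rho)(\p_{x_i}v_j)$ contribute terms bounded by $\frac{\tilde C_\Omega^2M^2}{m}(\mu'{}^\ast+\theta)\|\nabla v^{n+1}\|_{L^2}^2$ after invoking the elliptic estimate $\|\Delta\rho\|_{L^2}\lesssim$ (density data) $+\,C_\Omega\|\nabla v\|_{L^2}$ coming from the density equation (this inequality \eqref{key}, which the excerpt references, is the crucial link that converts $\|\nabla\rho\|_{L^\infty}$-type quantities into multiples of $\|\nabla v\|_{L^2}$). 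Hypothesis (H4) is precisely the statement that the sum of the bad coefficients is strictly less than $\mu_\ast$, so these absorb into the viscous term and we are left with the standard inequality $\tfrac{d}{dt}\|\sqrt\rho\,v\|_{L^2}^2+c\|\nabla v\|_{L^2}^2\le C(\|f\|_{L^2}^2+\|\sqrt\rho\,v\|_{L^2}^2)$ in discrete form, yielding $v\in L^\infty([0,T];L^2)\cap L^2([0,T];H^1_{0,\sigma})$ bounds and, from the density equation, $\rho\in L^\infty([0,T];H^1)\cap L^2([0,T];H^2_N)$ bounds, all uniform in $\dt$.

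With uniform bounds in hand I would build the piecewise-constant (in time) interpolants $\rho_\dt,v_\dt$ and extract weakly convergent subsequences: $\rho_\dt\weakto\rho$ and $v_\dt\weakto v$ in the respective reflexive spaces. The density is weakly equi-continuous in time (estimate $(\rho_\dt,\phi)$ increments using the discrete density equation and the uniform $H^2$ bound), so Lemma \ref{key-lemma1} upgrades this to pointwise-in-$t$ weak $L^2$ convergence of $\rho_\dt$, i.e. (A3) holds with $\alpha=m$, $\beta=M$. The main obstacle — and the reason the paper introduces Lemma \ref{key-lemma2} — is obtaining \emph{strong} $L^2([0,T]\times\Omega)$ convergence of $v_\dt$, which is needed to pass to the limit in the nonlinear terms $\rho v_jv$ and in the products $\rho f\cdot\varphi$, etc. Here I would verify (A1)--(A4) for the sequence and check that $\rho_\dt v_\dt$ is weakly equi-continuous in time by inserting the discrete momentum equation (every term on its right-hand side is controlled by the uniform energy bounds, tested against $\varphi\in S$); Lemma \ref{key-lemma2} then gives, for any $\lambda>0$, a bound on $\|v_k-v_l\|_{L^2(\Omega)}$ by $\lambda(\|v_k\|_{H^1}+\|v_l\|_{H^1}+\dots)$ plus $A_\lambda$ times a term that vanishes along the subsequence. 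Integrating in $t$, using the $L^2_tH^1_x$ bound to make the $\lambda$-term small, shows $\{v_\dt\}$ is Cauchy in $L^2([0,T]\times\Omega)^3$. Finally, with $\rho_\dt\to\rho$ a.e. (from strong $L^2$ convergence of $\rho_\dt$, which follows from Aubin--Lions applied to $\rho_\dt$ given its $H^2_x$ and time-regularity bounds) and $v_\dt\to v$ strongly in $L^2$, I would pass to the limit term by term in the discrete weak formulations to recover \eqref{weak1}--\eqref{weak2}, checking the initial-data terms via the time-regularity, which completes the construction of the weak $[0,T]$-solution and hence, by exhaustion in $T$, the global weak solution. $\blacksquare$
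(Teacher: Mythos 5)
Your proposal follows essentially the same route as the paper: time-discretization into decoupled linear elliptic problems solved by Lax--Milgram, the maximum principle giving $m\le\rho^{n+1}\le M$, the energy estimate in which (H4) absorbs the $\theta$- and $\theta^2$-nonlinearities (via Korn's inequality and the elliptic estimate that converts $\norm\Delta\rho^{n+1}\norm_{L^2(\Omega)}$ into a multiple of $\norm\nabla v^n_\tau\norm_{L^2(\Omega)^{3\times3}}$), weak compactness of the interpolants, Lemma \ref{key-lemma1} for pointwise-in-$t$ weak $L^2$ convergence of the density, Lemma \ref{key-lemma2} for the Cauchy property of the velocities in $L^2([0,T];L^2(\Omega)^3)$, and term-by-term passage to the limit. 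Two points in your sketch need repair.

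First, passing to the limit in the terms quadratic in the density gradient, namely $2\theta\frac{\mu'(\rho)}{\rho}(\p_{x_i}\rho)(\p_{x_j}\rho)(\p_{x_j}\varphi_i)$ and $\theta^2\frac{1}{\rho}(\p_{x_i}\rho)(\p_{x_j}\rho)(\p_{x_i}\varphi_j)$, requires \emph{strong} convergence of $\p_{x_i}\rho_k$ in $L^2([0,T];L^2(\Omega))$; a.e.\ convergence of $\rho_k$ together with weak convergence of $\nabla\rho_k$ is not enough, since a product of two weakly convergent sequences need not converge to the product of the limits. You only explicitly extract strong $L^2$ convergence of $\rho_k$ itself. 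Your Aubin--Lions invocation could in principle deliver the gradient convergence if applied with a triple such as $H^2\hookrightarrow\hookrightarrow H^1\hookrightarrow L^2$, but note that the classical Aubin--Lions lemma does not apply directly to piecewise-constant-in-time interpolants; the paper instead proves this by hand in Proposition \ref{strong-convergence2}, using the interpolation inequality \eqref{4AL} and the discrete time derivative of $\rho_k$ in exactly the same way as the velocity argument. You must state and obtain the strong convergence of $\nabla\rho_k$ explicitly.

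Second, the passage from weak $[0,T]$-solutions to a global solution cannot rest on ``uniqueness of the restriction on smaller intervals'': weak solutions of \eqref{NS1} are not known to be unique, so two separately constructed solutions on $[0,T]$ and $[0,2T]$ need not agree on $[0,T]$. The paper instead extracts nested subsequences of the same approximating family, so that the weak $[0,(l+1)T]$-solution restricts \emph{by construction} to the previously obtained weak $[0,lT]$-solution, and the gluing is then automatic. Your ``diagonal/exhaustion'' phrase suggests you have this in mind, but the appeal to uniqueness must be removed.
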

%
%
%\begin{Thm}\label{main1}
%Under the hypothesis (H1)-(H3), for each $T>0$,  there exist a $[0,T]$ weak solution of \eqref{NS1}.  
%\end{Thm}
%
%
%\begin{Thm}\label{main2}
%In addition to the hypothesis (H1)-(H3), suppose that the external force $f$ is locally uniform, i.e., $\norm f\norm_{L^2([t,t+a];L^2(\Omega)^3)}$ depends only on $a\ge0$. Then, there exist a global weak solution of \eqref{NS1}.  
%\end{Thm}
%
\noindent We state a long time behavior of weak solutions (this is obvious for classical solutions): 
\begin{Prop}\label{Remark1}
Let $\rho,v$ be a global weak solution of \eqref{NS1}. Then, it holds that 
\begin{eqnarray}\label{remark111}
&& \frac{1}{{\rm vol}(\Omega)}\int_\Omega \rho(t,\cdot) dx\equiv\bar{\eta}:=\frac{1}{{\rm vol}(\Omega)}\int_\Omega\eta dx,\mbox{ \quad $\forall\,t\ge0$},
\end{eqnarray}
and $\norm \rho(t,\cdot)-\bar{\eta}\norm_{L^2(\Omega)}$ is absolutely continuous with respect to $t$ satisfying 
\begin{eqnarray}\label{remark11}\label{remark12}
&&\norm \rho(t,\cdot)-\bar{\eta}\norm_{L^2(\Omega)}\le\norm \eta-\bar{\eta}\norm_{L^2(\Omega)} e^{-2\theta A_P^2 t},\mbox{ \quad $\forall\,t\ge0$},
\end{eqnarray}
where $A_P>0$ is the constant in Poincar\'e inequality for $H^1(\Omega)$ depending only on $\Omega$. \eqref{remark111} means ``mass conservation'' and  \eqref{remark12} shows ``mixing of fluids''. 
\end{Prop}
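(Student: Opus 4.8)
The plan is to establish \eqref{remark111} first, then use it to derive the decay estimate \eqref{remark11} by testing the density equation \eqref{weak1} against a suitable time-dependent test function and running a Gronwall-type argument on $\norm\rho(t,\cdot)-\bar\eta\norm_{L^2(\Omega)}^2$.

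First I would prove mass conservation. Since $\rho\in L^2_{\rm loc}([0,\infty);H^2_N(\Omega))$ with $\nabla\rho\cdot\nu=0$ on $\p\Omega$, I want to test \eqref{weak1} with $\phi(t,x)=\chi(t)$ for a smooth cutoff $\chi$ supported in $[0,T)$. Strictly, $\phi$ must have support in $[0,T)\times\Omega$, so I would instead take $\phi(t,x)=\chi(t)\psi_\delta(x)$ with $\psi_\delta\to 1$ a cutoff that is $1$ on $\Omega$ except a $\delta$-neighborhood of $\p\Omega$, pass to the limit $\delta\to0$ using $v=0$ on $\p\Omega$ (for the $v\cdot\nabla\rho$ term, which equals $\nabla\cdot(\rho v)$ by incompressibility) and $\nabla\rho\cdot\nu=0$ (for the $\theta\Delta\rho$ term, integrating by parts). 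This yields $\frac{d}{dt}\int_\Omega\rho(t,\cdot)dx=0$ in the distributional sense on $[0,T)$, and since $t\mapsto\int_\Omega\rho(t,\cdot)dx$ is continuous (indeed $\rho(t,\cdot)$ depends continuously on $t$ weakly in $L^2(\Omega)$, which follows from $\rho\in L^\infty([0,T];H^1(\Omega))$ together with the equation giving an $L^2$-in-time bound on $\p_t\rho$ in $(H^1)^*$ or $(H^2_N)^*$), the integral is constant, equal to its value at $t=0$, namely $\int_\Omega\eta\,dx$. Dividing by ${\rm vol}(\Omega)$ gives \eqref{remark111}.

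Next, for the decay, set $w(t):=\norm\rho(t,\cdot)-\bar\eta\norm_{L^2(\Omega)}^2$. The natural step is to test \eqref{weak1} with $\phi(t,x)=\chi(t)(\rho(t,x)-\bar\eta)$, again regularized near $\p\Omega$ and in time, to obtain
\begin{eqnarray*}
\frac{1}{2}\frac{d}{dt}\norm\rho-\bar\eta\norm_{L^2(\Omega)}^2 + \theta\norm\nabla\rho\norm_{L^2(\Omega)}^2 = 0
\end{eqnarray*}
in the sense of distributions on $[0,T)$: the term $\int_\Omega(v\cdot\nabla\rho)(\rho-\bar\eta)dx=\int_\Omega v\cdot\nabla(\tfrac12(\rho-\bar\eta)^2)dx=0$ by $\nabla\cdot v=0$ and $v|_{\p\Omega}=0$, and $-\theta\int_\Omega\Delta\rho\,(\rho-\bar\eta)dx=\theta\norm\nabla\rho\norm_{L^2(\Omega)}^2$ using $\nabla\rho\cdot\nu=0$. (The $\p_t\rho$ pairing produces $\tfrac12\frac{d}{dt}\norm\rho-\bar\eta\norm_{L^2}^2$ by the Lions-Magenes type lemma, since $\rho-\bar\eta\in L^2(H^1)$ and $\p_t\rho\in L^2((H^1)^*)$; this also gives the claimed absolute continuity of $t\mapsto w(t)$, hence of $t\mapsto\sqrt{w(t)}$ on any interval where $w>0$.) By \eqref{remark111}, $\int_\Omega(\rho(t,\cdot)-\bar\eta)dx=0$, so Poincaré's inequality for $H^1(\Omega)$ applies to $\rho-\bar\eta$: $\norm\nabla\rho\norm_{L^2(\Omega)}^2\ge A_P^2\norm\rho-\bar\eta\norm_{L^2(\Omega)}^2$ (with the convention that $A_P$ is the constant such that this holds, consistent with the statement). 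Therefore $\frac{d}{dt}w(t)\le -2\theta A_P^2\, w(t)$ a.e., and Gronwall gives $w(t)\le w(0)e^{-4\theta A_P^2 t}$, i.e. $\norm\rho(t,\cdot)-\bar\eta\norm_{L^2(\Omega)}\le\norm\eta-\bar\eta\norm_{L^2(\Omega)}e^{-2\theta A_P^2 t}$.

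The main obstacle is purely a matter of rigor in the test-function manipulations: the definition of weak solution restricts test functions to those supported strictly inside $\Omega$ in space and in $[0,T)$ in time, while the arguments above want to use $\phi\equiv 1$ and $\phi=\rho-\bar\eta$ (which does not vanish near $\p\Omega$) with full time support up to $T$. I would handle the spatial issue with a boundary cutoff and the boundary conditions $v|_{\p\Omega}=0$, $\nabla\rho\cdot\nu|_{\p\Omega}=0$ to kill the error terms in the limit, and the temporal issue by the standard density/approximation argument that extends \eqref{weak1} from test functions with support in $[0,T)$ to the Steklov-averaged identity valid for a.e. $t$, which is exactly what justifies differentiating $w(t)$. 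The algebraic cancellations themselves are routine; no delicate estimate is needed beyond Poincaré's inequality.
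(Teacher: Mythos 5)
Your proposal follows essentially the same route as the paper: mass conservation via spatial cutoff test functions killed at the boundary by $v|_{\p\Omega}=0$ and $\nabla\rho\cdot\nu=0$, then the $L^2$-energy identity obtained by pairing the density equation with $\rho-\bar\eta$, followed by Poincar\'e for mean-zero functions and Gronwall; the only substantive difference is that you justify the chain rule $\tfrac12\tfrac{d}{dt}\|\rho-\bar\eta\|_{L^2(\Omega)}^2=\langle\p_t\rho,\rho-\bar\eta\rangle$ by the Lions--Magenes lemma (using $\p_t\rho\in L^2((H^1)^*)$), whereas the paper does it by hand with a space--time mollification and dominated convergence, both of which are legitimate. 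One bookkeeping remark: from $\tfrac{d}{dt}w\le-2\theta A_P^2\,w$ Gronwall yields $w(t)\le w(0)e^{-2\theta A_P^2 t}$, not $e^{-4\theta A_P^2 t}$, so your derivation (exactly like the paper's own displayed differential inequality) actually produces $\|\rho(t,\cdot)-\bar\eta\|_{L^2(\Omega)}\le\|\eta-\bar\eta\|_{L^2(\Omega)}e^{-\theta A_P^2 t}$, and the extra factor $2$ in the exponent of \eqref{remark12} is not supplied by either argument.
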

\begin{proof}
Let $T>0$ be an arbitrary number.  
Let $\chi$ be the indicator function of $\Omega$ ($\chi(x)\equiv1$ on $\Omega$) and $\chi_\ep\in C^\infty_0(\Omega;[0,1])$ be such that $\chi_\ep\to \chi$ in $L^2(\Omega)$ as $\ep\to0$ and $\chi_\ep(x)\equiv 1$ on $\Omega_\ep:=\{x\in\Omega\,|\,{\rm dist}(x,\p\Omega)>\ep\}$.  Taking $\phi:=g(t)\chi_\ep(x)$ with supp$(g)\subset(0,T)$ in \eqref{weak1} and sending $\ep\to0$, we have 
\begin{eqnarray*}
\int_0^T\Big(\int_\Omega\rho(t,x) dx\Big)g'(t)dt&=&\int_0^T\Big\{\int_\Omega(v\cdot\nabla\rho-\theta \Delta \rho) dx\Big\}g(t)dt\\
&=&\int_0^T\Big\{\int_{\p\Omega}(v\rho-\theta \nabla \rho)\cdot\nu dS\Big\}g(t)dt=0,
\end{eqnarray*}
where we note that $v$ is divergence-free and $\rho$ satisfies the $0$-Neumann boundary condition.  Hence, $\int_\Omega\rho(t,x) dx$ is weakly $t$-differentiable with the weak derivative equal to $0$ and $\int_\Omega\rho(t,x) dx:(0,T)\to\R$ is constant. Taking  $\phi:=g(t)\chi_\ep(x)$ with supp$(g)\subset[0,T)$ in \eqref{weak1} and sending $\ep\to0$, we find 
$$g(0)\int_\Omega\rho(t,x) dx=g(0)\int_\Omega\eta dx,$$
which implies \eqref{remark111}. 

Set $\bar{\rho}:=\rho-  \frac{1}{{\rm vol}(\Omega)}\int_\Omega \rho(t,\cdot) dx$. We see that $\bar{\rho}= \rho-\bar{\eta}$ and $\bar{\rho}$ satisfies for any test function $\phi$, 
\begin{eqnarray}\label{wedddd}
 \int_0^T\int_\Omega\{ -\bar{\rho} \p_t\phi + (v\cdot \nabla\bar{\rho} - \theta \Delta \bar{\rho})\phi \}dxdt-\int_\Omega (\eta-\bar{\eta})\phi(0,\cdot)dx=0. 
 \end{eqnarray} 
In particular, taking $\phi$ such that supp$(\phi)\subset(0,T)\times\Omega$, we see  that $\bar{\rho}$ is weakly $t$-partial differentiable with 
$$\p_t\bar{\rho}=-v\cdot\nabla\bar{\rho}+\theta\Delta\bar{\rho}\in L^1([0,T];L^1(\Omega)).$$
Hence, we have $\bar{\rho}(t,\cdot)=\bar{\rho}(0,\cdot)+\int_0^t \{-v(s,\cdot)\cdot\nabla\bar{\rho}(s,\cdot)+\theta\Delta\bar{\rho}(s,\cdot)\}ds$, where $\bar{\rho}(0,\cdot)$ is determined so that $\bar{\rho}(t,\cdot)$ satisfies \eqref{wedddd}, i.e., $\bar{\rho}(0,\cdot)=\eta-\bar{\eta}$. 

We claim that $\bar{\rho}^2$ is weakly $t$-partial differentiable with 
\begin{eqnarray}\label{2121}
\frac{1}{2}\p_t(\bar{\rho}^2)=(\p_t\bar{\rho})\bar{\rho}=-(v\cdot\nabla\bar{\rho})\bar{\rho}+\theta(\Delta\bar{\rho})\bar{\rho}\in L^1([0,T];L^1(\Omega)).
\end{eqnarray}
In fact, let $\beta_\delta$ be the standard mollifier in $\R_t\times\R^3_x$  with the parameter $\delta>0$ and let $\bar{\rho}_\delta:=\beta_\delta\ast\bar{\rho}$; for each test function $\phi$ such that supp$(\phi)\subset(0,T)\times\Omega$, we have $\norm  \bar{\rho}_\delta-\bar{\rho}\norm_{L^1({\rm supp}(\phi))}\to0$, $\norm  \p_t\bar{\rho}_\delta-\p_t\bar{\rho}\norm_{L^1({\rm supp}(\phi))}\to0$  as $\delta\to0$; taking a subsequence of $\{\bar{\rho}_\delta\}_{\delta>0}$, still denoted by the same symbol, we see that $(\p_t\bar{\rho})(\bar{\rho}_\delta-\bar{\rho})\phi \to0$ a.e. on supp$(\phi)$ with $|(\p_t\bar{\rho})(\bar{\rho}_\delta-\bar{\rho})\phi|\le 2M|(\p_t\bar{\rho})\phi|$;  Lebesgue's dominated convergence theorem implies \eqref{2121}, i.e.,   
\begin{eqnarray*}
&&\int_0^T\int_\Omega \2\bar{\rho}_\delta^2\p_t\phi dxdt=-\int_0^T\int_\Omega (\p_t\bar{\rho}_\delta)\bar{\rho}_\delta\phi dxdt\\
&&\quad=-\int_0^T\int_\Omega (\p_t\bar{\rho}_\delta-\p_t\bar{\rho})\bar{\rho}_\delta\phi dxdt-\int_0^T\int_\Omega\p_t\bar{\rho}(\bar{\rho}_\delta-\bar{\rho})\phi dxdt -\int_0^T\int_\Omega (\p_t\bar{\rho})\bar{\rho}\phi dxdt\\
&&\qquad \to \int_0^T\int_\Omega \2\bar{\rho}^2\p_t\phi dxdt=-\int_0^T\int_\Omega (\p_t\bar{\rho})\bar{\rho}\phi dxdt\quad\mbox{as $\delta\to0$.}
\end{eqnarray*} 
 Taking $\phi:=g(t)\chi_\ep(x)$ with supp$(g)\subset(0,T)$ in the previous equality, we have with \eqref{2121}, 
\begin{eqnarray*}
&& \int_0^T\Big(\int_\Omega \frac{1}{2}(\bar{\rho}^2)\chi_\ep(x)dx\Big)g'(t)dt
 = -\int_0^T\int_\Omega \frac{1}{2}(\p_t\bar{\rho})\bar{\rho}\chi_\ep(x)g(t)dxdt\\
 &&\quad =\int_0^T\int_\Omega\Big\{(v\cdot\nabla\bar{\rho})\bar{\rho}-\theta(\Delta\bar{\rho})\bar{\rho}\Big\} \chi_\ep(x) g(t)dxdt.
\end{eqnarray*} 
Sending $\ep\to0$ and noting again that $v$ is divergence-free, we obtain with integration by parts 
\begin{eqnarray*}
\int_0^T\2\norm \bar{\rho}(t,\cdot)\norm_{L^2(\Omega)}^2g'(t)dt&=&  \int_0^T\int_\Omega \Big\{(v\cdot\nabla\bar{\rho})\bar{\rho}-\theta(\Delta\bar{\rho})\bar{\rho}\Big\} g(t)dxdt\\
&=&\int_0^T\theta\norm \nabla\bar{\rho}(t,\cdot)\norm_{L^2(\Omega)}^2g(t)dt.
\end{eqnarray*}
Hence, $\norm \bar{\rho}(t,\cdot)\norm_{L^2(\Omega)}^2$ is weakly differentiable on $[0,T]$ with the weak derivative equal to $-2\theta\norm \nabla\bar{\rho}(t,\cdot)\norm_{L^2(\Omega)}^2$. 
Therefore, $\norm \bar{\rho}(t,\cdot)\norm_{L^2(\Omega)}^2$ is absolutely continuous on $[0,T]$ and satisfies  with Poincar\'e inequality,  
$$\frac{d}{dt}\norm \bar{\rho}(t,\cdot)\norm_{L^2(\Omega)}^2=- 2\theta \norm \nabla\bar{\rho}(s,\cdot)\norm_{L^2(\Omega)}^2\le - 2\theta A_P^2 \norm \bar{\rho}(t,\cdot)\norm_{L^2(\Omega)}^2,\quad\mbox{a.e. } t\in[0,T].$$
Since $T>0$ is arbitrary, we conclude \eqref{remark12}. 
\end{proof}
\noindent Note that  Beir\~ao da Veiga \cite{B} showed this kind of asymptotic behavior of a strong solution of \eqref{NS0}$|_{f=0}$ with initial data sufficiently close to the constant solution $\bar{\eta},0$.  
%
%%%%%%%%%%%%%%%%%%%
\setcounter{section}{2}
\setcounter{equation}{0}
\section{Time-discretization and elliptic problems}

Let  $0<\tau\le\2$ be the time-discretization parameter that will be sent to $0$ at the end. For each $n\in\N\cup\{0\}$, define $f^{n+1}$ as 
\begin{eqnarray}\label{333fff}
f^{n+1}:=\tau^{-1}\int_{\tau n}^{\tau n+\tau}f(t,\cdot)dt.
\end{eqnarray}
For any initial data $\eta\in H^1(\Omega)$ satisfying $m\le \eta\le M$,  we can find $\eta_\tau\in C^\infty(\bar{\Omega})$ such that 
$$m\le \eta_\tau\le M,\quad \norm \eta_\tau-\eta\norm_{H^1(\Omega)}\le \tau.$$
%For each $v\in H^1_{0,\sigma}(\Omega)$, the function $v_\tau\in C^\infty_{0,\sigma}(\Omega)$ denotes  a smooth approximation of $v$ such that 
%$$ \norm v_\tau-v\norm_{H^1(\Omega)}\le \tau.$$
%\begin{eqnarray*}
 %\rho^0:=\eta_\tau,\quad  v^0:=u,\quad v^0_\tau:=0.
%\end{eqnarray*}
We  inductively introduce a series of linear elliptic problems with $n\in\N\cup\{0\}$ as 
\begin{eqnarray}\nonumber
&&\qquad\,\, \rho^0:=\eta_\tau,\quad  v^0:=u,\quad  v^0_\tau:=0,\\
\label{ellip1}
&&\left \{
\begin{array}{lll}
&\dis  \frac{\rho^{n+1}-\rho^n}{\tau}+v_\tau^n\cdot\nabla\rho^{n+1}=\theta\Delta \rho^{n+1}\mbox{\,\,\, in $\Omega$, }\medskip\\
&\nabla\rho^{n+1}\cdot\nu=0\mbox{\,\,\, on $\p\Omega$,\quad $\rho^{n+1}\in H^2_N(\Omega)$,}
\end{array}
\right.\\\label{ellip2}
&&\left \{
\begin{array}{lll}
&\dis \frac{\rho^{n+1}v^{n+1}-\rho^{n}v^n}{\tau}+\sum_{j=1}^3\p_{x_j} (\rho^{n+1} v^n_{\tau j} v^{n+1})
 -\nabla\cdot \{ \mu(\rho^{n+1})(\nabla v^{n+1}+{}^{\rm t}\!(\nabla v^{n+1})) \}   \\
&\qquad  -\theta v^{n+1}\Delta \rho^{n+1} -\theta(v^{n+1}\cdot\nabla)\nabla\rho^{n+1}-\theta(\nabla\rho^{n+1}\cdot\nabla)v^{n+1}\\
&\dis \qquad+2\theta \nabla\cdot\{\mu(\rho^{n+1}) \nabla\nabla(\log\rho^{n+1})\}\\
&\dis \qquad +\theta^2\Big\{\frac{\Delta\rho^{n+1}\nabla\rho^{n+1}}{\rho^{n+1}}
+   \frac{(\nabla\rho^{n+1}\cdot\nabla)\nabla\rho^{n+1}}{\rho^{n+1}} - \frac{|\nabla\rho^{n+1}|^2 \nabla\rho^{n+1}}{\rho^{n+1}\rho^{n+1}}  \Big\} \medskip\\
&\quad=  \rho^{n+1} f^{n+1}\mbox{\,\,\, in $\Omega$},\medskip\\
&v^{n+1}=0\mbox{\,\,\, on $\p\Omega$,\quad $v^{n+1}\in H^1_{0,\sigma(\Omega)}$},
\end{array}
\right.\\\nonumber
&&\qquad\,\, \mbox{$v^n_\tau$ with $n\ge1$ stands for a $C^\infty_{0,\sigma}(\Omega)$-approximation of  $v^n\in H^1_{0,\sigma}(\Omega)$}\\\nonumber
&&\qquad\,\,\mbox{such that $ \norm v^n_\tau-v^n\norm_{H^1(\Omega)^3}\le \tau$.}
\end{eqnarray} 
 Note that \eqref{ellip1} and \eqref{ellip2} are not coupled.    
 %%\subsection{Solvability of elliptic problems}
 
We solve \eqref{ellip1} and \eqref{ellip2} from $n=0$ in the sense of weak solutions, assuming that $\rho^n\in C^2(\bar{\Omega})$ and $v^n\in H^1_{0,\sigma}(\Omega)$ are given, while we demonstrate regularity arguments for \eqref{ellip1}.   
\begin{Prop}\label{P31}
 There exists a unique solution $\rho^{n+1}\in H^4(\Omega)\cap H^2_N(\Omega)$ of  \eqref{ellip1}, where Sobolev embedding theorem implies $\rho^{n+1}\in C^2(\bar{\Omega})$ and hence, the Neumann boundary condition is satisfied in the classical sense.  Furthermore, $\rho^{n+1}$ satisfies 
 \begin{eqnarray*}\label{P31e}
\norm \rho^{n+1}\norm_{H^2(\Omega)}^2\le C_\Omega(\norm\Delta\rho^{n+1}\norm_{L^2(\Omega)}^2+\norm \rho^{n+1}\norm_{H^1(\Omega)}^2),
\end{eqnarray*}
where $C_\Omega$ is a constant depending only on $\Omega$.     
\end{Prop}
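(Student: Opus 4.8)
The statement is essentially the standard $L^2$-theory for a linear, uniformly elliptic second-order equation in divergence form (after expansion, $-\theta\Delta\rho^{n+1}+v^n_\tau\cdot\nabla\rho^{n+1}+\tau^{-1}\rho^{n+1}=\tau^{-1}\rho^n$) with a Neumann boundary condition, combined with elliptic regularity bootstrapped up to $H^4$. The plan is to proceed in four steps: existence/uniqueness of a weak solution via Lax--Milgram, then successive elliptic regularity estimates to reach $H^4$, then Sobolev embedding to get $C^2(\bar\Omega)$ and the classical boundary condition, and finally the quoted $H^2$ inequality.

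First I would set up the weak formulation in $H^1(\Omega)$: multiply by a test function $w\in H^1(\Omega)$ and integrate by parts, using $\nabla\rho^{n+1}\cdot\nu=0$ to kill the boundary term, obtaining the bilinear form $a(\rho,w)=\int_\Omega\big(\theta\nabla\rho\cdot\nabla w+(v^n_\tau\cdot\nabla\rho)w+\tau^{-1}\rho w\big)dx$ and the right-hand side $\langle F,w\rangle=\tau^{-1}\int_\Omega\rho^n w\,dx$. Boundedness of $a$ is clear since $v^n_\tau\in C^\infty_{0,\sigma}(\Omega)$ is in particular bounded; coercivity needs the convective term handled — here one uses that $v^n_\tau$ is divergence-free and compactly supported, so $\int_\Omega(v^n_\tau\cdot\nabla\rho)\rho\,dx=\tfrac12\int_\Omega v^n_\tau\cdot\nabla(\rho^2)\,dx=0$, whence $a(\rho,\rho)\ge\theta\|\nabla\rho\|_{L^2}^2+\tau^{-1}\|\rho\|_{L^2}^2$, giving coercivity on $H^1(\Omega)$ (this is where $\tau^{-1}>0$ is essential, since the zeroth-order term is what controls the full $H^1$ norm in the Neumann setting). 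Lax--Milgram then yields a unique $\rho^{n+1}\in H^1(\Omega)$.

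Next comes elliptic regularity. Since $\rho^n\in C^2(\bar\Omega)$ the data $\tau^{-1}\rho^n$ is smooth, and the coefficient $v^n_\tau$ is $C^\infty$; the standard Neumann elliptic regularity theory (using the $C^4$-smoothness of $\p\Omega$ from (H1) and e.g. the difference-quotient / tangential-derivative method, or Agmon--Douglis--Nirenberg) gives $\rho^{n+1}\in H^2_N(\Omega)$ first — note the Neumann condition is recovered in the trace sense as part of $H^2_N(\Omega)$ — then bootstraps: the equation reads $\Delta\rho^{n+1}=\theta^{-1}(v^n_\tau\cdot\nabla\rho^{n+1}+\tau^{-1}\rho^{n+1}-\tau^{-1}\rho^n)$, and once $\rho^{n+1}\in H^2$ the right-hand side is in $H^1$, so $\rho^{n+1}\in H^3$; iterating once more (right-hand side now in $H^2$) gives $\rho^{n+1}\in H^4(\Omega)$, which requires the boundary to be $C^4$ — exactly (H1). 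Sobolev embedding in $\R^3$, $H^4(\Omega)\hookrightarrow C^2(\bar\Omega)$, then makes $\nabla\rho^{n+1}\cdot\nu=0$ classical on $\p\Omega$.

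Finally, the inequality $\|\rho^{n+1}\|_{H^2}^2\le C_\Omega(\|\Delta\rho^{n+1}\|_{L^2}^2+\|\rho^{n+1}\|_{H^1}^2)$ is the standard "elliptic estimate controlling all second derivatives by the Laplacian plus lower order" for the Neumann (or here just for $H^2$) setting on a smooth bounded domain; I would simply invoke it, with $C_\Omega$ depending only on $\Omega$ through the geometry of $\p\Omega$. The only genuinely delicate point is the boundary regularity argument taking $H^2$ to $H^3$ to $H^4$ while keeping the Neumann structure — but since all coefficients and data here are smooth and $\p\Omega\in C^4$, this is classical and I would cite a standard reference rather than reproduce it; the main thing to get right is the coercivity computation exploiting $\nabla\cdot v^n_\tau=0$ and $\mathrm{supp}(v^n_\tau)\Subset\Omega$, and flagging that the zeroth-order term $\tau^{-1}$ is what rescues coercivity in the pure-Neumann case.
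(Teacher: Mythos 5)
Your proposal is correct and follows essentially the same route as the paper: Lax--Milgram for the weak formulation with coercivity coming from $\nabla\cdot v^n_\tau=0$ and the $\tau^{-1}$ zeroth-order term, then rewriting the equation as a Neumann problem for $-\Delta\rho+\rho$ and bootstrapping elliptic regularity to $H^4(\Omega)$ using the $C^4$ boundary, followed by Sobolev embedding and the standard $H^2$ estimate \eqref{key}. The only cosmetic difference is that the paper packages the regularity step by explicitly introducing $\tilde g:=\theta^{-1}\{g-v^n_\tau\cdot\nabla\rho^{n+1}+(1-\tau^{-1})\rho^{n+1}\}$ and citing Taylor for the Neumann problem $-\Delta\rho+\rho=\tilde g$, while you also make the coercivity cancellation $\int_\Omega(v^n_\tau\cdot\nabla\rho)\rho\,dx=0$ explicit where the paper leaves it implicit.
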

 \begin{proof}
Define the Lax-Milgram bilinear form $B$ for the weak formulation of  \eqref{ellip1} as 
 \begin{eqnarray*}
&& B:H^1(\Omega)\times H^1(\Omega)\to\R,\\
&&B(\rho,w):=\theta(\nabla \rho,\nabla w)_{L^2(\Omega)^3}+ \tau^{-1}( \rho, w)_{L^2(\Omega)}+(v^n_\tau\cdot\nabla \rho, w)_{L^2(\Omega)},
 \end{eqnarray*}
 where $(\cdot,\cdot)_{L^2(\Omega)},(\cdot,\cdot)_{L^2(\Omega)^3}$ stand for the inner product of $L^2(\Omega),L^2(\Omega)^3$, respectively. For any $\rho,w\in H^1(\Omega)$, we have  
 \begin{eqnarray*}
| B(\rho,w)|&\le& \Big(\theta+\tau^{-1}+\max_{x\in\Omega}|v^n_\tau|\Big)\norm\rho\norm_{H^1(\Omega)}\norm w\norm_{H^1(\Omega)},\\
 B(\rho,\rho)&=& \theta\norm\nabla\rho\norm_{L^2(\Omega)}+\tau^{-1}\norm\rho\norm_{L^2(\Omega)}\ge \min\{\theta,\tau^{-1} \}\norm\rho\norm_{H^1(\Omega)}.
 \end{eqnarray*}
 Hence, Lax-Milgram theorem yields a unique solution $\rho^{n+1}\in H^1(\Omega)$ of 
 $$B(\rho,w)=(g,w),\quad \forall\,w\in H^1(\Omega),\,\,\,\,\,g:=\tau^{-1}\rho^n\in L^2(\Omega).$$
If we set $\tilde{g}:=\theta^{-1}\{g-v^n_\tau\cdot\nabla\rho^{n+1}+(1-\tau^{-1})\rho^{n+1}\}\in L^2(\Omega)$, we see that $\rho^{n+1}$ is a solution of 
\begin{eqnarray}\label{311}
(\nabla \rho,\nabla w)_{L^2(\Omega)^3}+ ( \rho, w)_{L^2(\Omega)}=(\tilde{g},w)_{L^2(\Omega)},\quad\forall\,w\in H^1(\Omega),  
\end{eqnarray} 
where \eqref{311} is the weak formulation of 
\begin{eqnarray}\label{3112}
-\Delta\rho+\rho=\tilde{g}\mbox{ in $\Omega$}, \quad \nabla\rho\cdot\nu=0\mbox{ on $\p\Omega$}.  
\end{eqnarray}
The classical results on the problem \eqref{311} (see, e.g., Chapter 5, Section 7 of \cite{Taylor}) state that {\it for each $\tilde{g}\in L^2(\Omega)$,  \eqref{311} admits a unique solution $\rho\in H^1(\Omega)$; 
$\rho$ in fact belongs to  $H^2_N(\Omega)$ and satisfies \eqref{3112} almost everywhere; 
if $\p\Omega$ is $C^{2+r}$-smooth and $\tilde{g}\in H^r(\Omega)$, $\rho$ belongs to $H^{2+r}(\Omega)$;  there exists a constant $C_\Omega$  depending only on $\Omega$ such that 
\begin{eqnarray}\label{key}
\norm \tilde{\rho}\norm_{H^2(\Omega)}^2\le C_\Omega(\norm\Delta\tilde{\rho}\norm_{L^2(\Omega)}^2+\norm \tilde{\rho}\norm_{H^1(\Omega)}^2),\quad \forall\,\tilde{\rho}\in H^2_N(\Omega).
\end{eqnarray}}
Applying this statement to $\rho^{n+1}$, we conclude our assertion.  
 \end{proof}
\begin{Prop}\label{P32}
It holds that  $\int_\Omega\rho^{n+1}dx=\int_\Omega\rho^ndx$. Furthermore, 
if $m\le \rho^n\le M$,  the solution $\rho^{n+1}$ of  \eqref{ellip1} satisfies   $m\le \rho^{n+1}\le M$. 
\end{Prop}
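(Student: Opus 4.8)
The plan is to prove the two assertions of Proposition~\ref{P32} separately, both by testing the weak formulation of \eqref{ellip1} with suitable test functions.

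\textbf{Mass conservation.} First I would take $w\equiv 1$ in the weak formulation $(\nabla\rho^{n+1},\nabla w)_{L^2(\Omega)^3}+\tau^{-1}(\rho^{n+1},w)_{L^2(\Omega)}+(v^n_\tau\cdot\nabla\rho^{n+1},w)_{L^2(\Omega)}\cdot\theta^{-1}$-type identity; more precisely, go back to $B(\rho^{n+1},w)=(\tau^{-1}\rho^n,w)_{L^2(\Omega)}$ for all $w\in H^1(\Omega)$ and put $w\equiv 1$. The term $\theta(\nabla\rho^{n+1},\nabla 1)_{L^2(\Omega)^3}$ vanishes. The convection term $(v^n_\tau\cdot\nabla\rho^{n+1},1)_{L^2(\Omega)}=\int_\Omega v^n_\tau\cdot\nabla\rho^{n+1}\,dx=\int_\Omega\nabla\cdot(\rho^{n+1}v^n_\tau)\,dx=0$ because $v^n_\tau\in C^\infty_{0,\sigma}(\Omega)$ is divergence-free and compactly supported (so the boundary term disappears). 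What remains is $\tau^{-1}\int_\Omega\rho^{n+1}\,dx=\tau^{-1}\int_\Omega\rho^n\,dx$, which gives the claim. Since $\rho^0=\eta_\tau$, this also yields $\int_\Omega\rho^{n+1}\,dx=\int_\Omega\eta_\tau\,dx$ for all $n$.

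\textbf{Maximum principle.} For the upper bound, test with $w:=(\rho^{n+1}-M)^+\in H^1(\Omega)$ (this is a legitimate test function; note $\nabla w=\mathbf 1_{\{\rho^{n+1}>M\}}\nabla\rho^{n+1}$ a.e.). Then $\theta(\nabla\rho^{n+1},\nabla w)_{L^2(\Omega)^3}=\theta\norm\nabla w\norm_{L^2(\Omega)^3}^2\ge 0$. The convection term is $\int_\Omega(v^n_\tau\cdot\nabla\rho^{n+1})(\rho^{n+1}-M)^+\,dx=\int_\Omega v^n_\tau\cdot\nabla w\,(\rho^{n+1}-M)^+\,dx=\tfrac12\int_\Omega v^n_\tau\cdot\nabla((\rho^{n+1}-M)^{+2})\,dx=-\tfrac12\int_\Omega(\nabla\cdot v^n_\tau)((\rho^{n+1}-M)^{+})^2\,dx=0$, again by $\nabla\cdot v^n_\tau=0$ and compact support. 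Hence $B(\rho^{n+1},w)=\theta\norm\nabla w\norm_{L^2}^2+\tau^{-1}(\rho^{n+1},w)_{L^2(\Omega)}$, and this equals $(\tau^{-1}\rho^n,w)_{L^2(\Omega)}$. Rearranging, $\theta\norm\nabla w\norm_{L^2}^2+\tau^{-1}(\rho^{n+1}-\rho^n,w)_{L^2(\Omega)}=0$. Now write $(\rho^{n+1}-\rho^n,w)_{L^2(\Omega)}=\int_\Omega(\rho^{n+1}-M)(\rho^{n+1}-M)^+\,dx+\int_\Omega(M-\rho^n)(\rho^{n+1}-M)^+\,dx=\norm w\norm_{L^2(\Omega)}^2+\int_\Omega(M-\rho^n)(\rho^{n+1}-M)^+\,dx\ge\norm w\norm_{L^2(\Omega)}^2$, using $\rho^n\le M$. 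So $\theta\norm\nabla w\norm_{L^2}^2+\tau^{-1}\norm w\norm_{L^2(\Omega)}^2\le 0$, forcing $w=0$, i.e. $\rho^{n+1}\le M$ a.e. The lower bound $\rho^{n+1}\ge m$ follows symmetrically by testing with $w:=(\rho^{n+1}-m)^-=\min(\rho^{n+1}-m,0)$ (or $-(m-\rho^{n+1})^+$) and using $\rho^n\ge m$.

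I expect the routine part to dominate; the only point requiring mild care is confirming that $(\rho^{n+1}-M)^+$ is an admissible $H^1(\Omega)$ test function (standard: truncation of $H^1$ functions stays in $H^1$, by e.g. Stampacchia's lemma) and that the convection term genuinely vanishes, for which it is essential that $v^n_\tau$ is chosen in $C^\infty_{0,\sigma}(\Omega)$ — divergence-free \emph{and} compactly supported — rather than merely in $H^1_{0,\sigma}(\Omega)$; this is exactly how $v^n_\tau$ is set up in \eqref{ellip2}. The induction closes because the hypothesis $m\le\rho^n\le M$ is reproduced at level $n+1$, with base case $\rho^0=\eta_\tau$ satisfying $m\le\eta_\tau\le M$ by construction.
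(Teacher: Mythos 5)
Your proposal is correct, and for the maximum-principle half it takes a genuinely different route from the paper. For the mass conservation you test the Lax--Milgram identity with $w\equiv 1$, which is essentially the paper's computation (the paper integrates the classical equation over $\Omega$ and kills the boundary terms via the Neumann condition and $v^n_\tau\in C^\infty_{0,\sigma}(\Omega)$); the two are interchangeable. For the bounds $m\le\rho^{n+1}\le M$, however, the paper works with the classical solution: it sets $\rho:=m-\rho^{n+1}\in C^2(\bar\Omega)$ (regularity supplied by Proposition 3.1), and argues by contradiction using the strong maximum principle in the interior and Hopf's lemma at a boundary maximum, the latter contradicting $\nabla\rho\cdot\nu=0$. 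You instead run a Stampacchia truncation argument entirely at the weak level: testing with $(\rho^{n+1}-M)^+$, observing that the convection term vanishes because $v^n_\tau$ is divergence-free and compactly supported, and extracting $\theta\|\nabla w\|_{L^2}^2+\tau^{-1}\|w\|_{L^2}^2\le 0$ from the sign of $(M-\rho^n,w)_{L^2(\Omega)}$. Both arguments are sound. Yours buys independence from the $H^4$/$C^2(\bar\Omega)$ regularity of Proposition 3.1 and avoids the case analysis on where the maximum is attained (and the invocation of Hopf's lemma for the Neumann problem); the paper's version exploits regularity it has already paid for and yields the pointwise bound on all of $\bar\Omega$ directly rather than a.e.\ (though continuity upgrades your a.e.\ bound to the same conclusion). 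One point you flagged correctly and should keep: the vanishing of the convection term genuinely needs $v^n_\tau\in C^\infty_{0,\sigma}(\Omega)$, exactly as set up in the scheme.
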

\begin{proof}
Since $\rho^{n+1}$ is a classical solution of  \eqref{ellip1} and $v^n_\tau\in C^\infty_{0,\sigma}(\Omega)$, we have 
$$\tau^{-1}\int_\Omega(\rho^{n+1}-\rho^n)dx=\int_{\p\Omega}(-v^n_\tau\rho^n+\theta \nabla\rho^{n+1})\cdot\nu dS=0.$$ 
\indent Observe that     
\begin{eqnarray*}
&&0=\rho^{n+1}-\rho^n+\tilde{v}^n\cdot\nabla\rho^{n+1}\tau- \tau\theta \Delta \rho^{n+1}, \\
&&0=(m-\rho^{n+1})-(m-\rho^n)+\tilde{v}^n\cdot\nabla(m-\rho^{n+1})\tau- \tau\theta \Delta (m-\rho^{n+1}).
\end{eqnarray*}
Hence $\rho:=m-\rho^{n+1}\in C^2(\bar{\Omega})$ satisfies 
\begin{eqnarray*}
-\tau\theta \Delta\rho+\rho+\tilde{v}^n\cdot\nabla\rho\tau=m-\rho^n\le 0\mbox{ in $\Omega$},\quad \nabla\rho\cdot\nu=0\mbox{ on $\p\Omega$}.
\end{eqnarray*}
We want to prove that $\rho\le 0$ in $\Omega$ (then $\rho\le 0$ on $\bar{\Omega}$ due to continuity). Suppose that $\max_{x\in\bar{\Omega}}\rho(x)>0$.  
If there is a point of $\Omega$ that attains the maximum,  then the strong maximum principle (see e.g., Chapter 6, Section 6.4.3 of \cite{Evans}) implies that $\rho$ is constant within $\Omega$ to yield $\rho\equiv m-\rho^n\le 0$, which is a contradiction.   Hence, the maximum is attained only on $\p\Omega$, i.e., there exists $x_0\in\p\Omega$ such that $\rho(x)< \rho(x_0)$ for all $x\in \Omega$. Then, Hopf's lemma (see e.g., Chapter 6, Section 6.4.3 of \cite{Evans}) implies $\nabla\rho (x_0)\cdot\nu(x_0)>0$, which is a contradiction. Thus, we conclude $\max_{x\in\bar{\Omega}}\rho(x)\le0$ to have $m\le \rho^{n+1}$. The same reasoning with $\rho:=\rho^{n+1}-M$ yields $\rho^{n+1}\le M$.
\end{proof}
\begin{Prop}\label{P33}
The solution $\rho^{n+1}$ of  \eqref{ellip1} satisfies the following estimates:
\begin{eqnarray}\label{P331}
&&\norm \rho^{n+1}\norm_{L^2(\Omega)}\le \norm \rho^{n}\norm_{L^2(\Omega)} \le \norm \rho^{0}\norm_{L^2(\Omega)} ,\\\label{P332}
&&\norm \rho^{n+1}\norm_{L^2(\Omega)}^2+\theta\norm \nabla \rho^{n+1}\norm_{L^2(\Omega)^3}^2 \tau\le \norm \rho^{n}\norm_{L^2(\Omega)}^2,\\\label{P333}
%&&\norm| \nabla  \rho^{n+1}|^2 \norm_{L^2(\Omega)}\le ()    \norm \rho^{n+1}\norm_{L^\infty(\Omega)} \norm \Delta\rho^{n+1}\norm_{L^2(\Omega)}^2,\\\label{P334}
&&\theta\norm\Delta \rho^{n+1}\norm_{L^2(\Omega)}^2\tau +\norm \nabla \rho^{n+1}\norm_{L^2(\Omega)}^2 \le  \norm \nabla \rho^{n}\norm_{L^2(\Omega)}^2 + \frac{\tilde{C}_\Omega^2 M^2}{\theta}  \norm \nabla v^n_\tau\norm_{L^2(\Omega)^{3\times3}}^2 \tau,
\end{eqnarray}  
where $\tilde{C}_\Omega=1+2C_\Omega+2C_\Omega(1+A_P^2)^\2A_P^\2$ and $ \norm \nabla w\norm_{L^2(\Omega)^{3\times3}}^2:=\sum_{i,j=1}^3\norm \p_{x_j}w_i\norm_{L^2(\Omega)}^2$ for $w\in H^1(\Omega)^3$.
\end{Prop}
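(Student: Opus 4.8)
The three estimates are standard energy-type bounds for the discrete convection-diffusion equation \eqref{ellip1}, obtained by testing the weak formulation against suitable multipliers. The overarching method is: write \eqref{ellip1} in weak form, choose the test function, use the divergence-free condition on $v^n_\tau$ together with the Neumann boundary condition to kill the transport term, and absorb the remaining terms with Young's inequality and the elliptic regularity estimate \eqref{key} from Proposition \ref{P31}.

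\emph{Estimates \eqref{P331} and \eqref{P332}.} First I would recall that, since $\rho^{n+1}$ is a classical solution of \eqref{ellip1} and $v^n_\tau\in C^\infty_{0,\sigma}(\Omega)$, testing the equation with $\rho^{n+1}$ and integrating by parts gives $\int_\Omega (v^n_\tau\cdot\nabla\rho^{n+1})\rho^{n+1}dx = \2\int_\Omega v^n_\tau\cdot\nabla((\rho^{n+1})^2)dx = 0$ and $-\int_\Omega(\Delta\rho^{n+1})\rho^{n+1}dx = \norm\nabla\rho^{n+1}\norm_{L^2(\Omega)^3}^2$. Using the elementary identity $(a-b)a = \2(a^2-b^2)+\2(a-b)^2 \ge \2(a^2-b^2)$, the discrete time-derivative term yields $\tau^{-1}\int_\Omega(\rho^{n+1}-\rho^n)\rho^{n+1}dx \ge \2\tau^{-1}(\norm\rho^{n+1}\norm_{L^2(\Omega)}^2 - \norm\rho^n\norm_{L^2(\Omega)}^2)$. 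Combining these gives \eqref{P332} directly, and \eqref{P331} follows by dropping the gradient term, taking square roots, and iterating in $n$ down to $\rho^0$.

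\emph{Estimate \eqref{P333}.} This is the main obstacle and requires the $H^1$-level energy estimate: test \eqref{ellip1} with $-\Delta\rho^{n+1}$ (legitimate since $\rho^{n+1}\in H^4(\Omega)\cap H^2_N(\Omega)$ by Proposition \ref{P31}). The diffusion term gives $-\theta\int_\Omega(\Delta\rho^{n+1})^2dx = -\theta\norm\Delta\rho^{n+1}\norm_{L^2(\Omega)}^2$. The discrete time-derivative term, after integration by parts (the boundary term vanishes because $\nabla\rho^{n+1}\cdot\nu = \nabla\rho^n\cdot\nu = 0$, noting $\rho^n\in H^2_N$ as well), produces $\tau^{-1}\int_\Omega\nabla(\rho^{n+1}-\rho^n)\cdot\nabla\rho^{n+1}dx \ge \2\tau^{-1}(\norm\nabla\rho^{n+1}\norm_{L^2(\Omega)}^2 - \norm\nabla\rho^n\norm_{L^2(\Omega)}^2)$ via the same convexity identity. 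The transport term $\int_\Omega(v^n_\tau\cdot\nabla\rho^{n+1})\Delta\rho^{n+1}dx$ is the delicate one: it must be bounded by $\tilde C_\Omega M \norm\nabla v^n_\tau\norm_{L^2(\Omega)^{3\times3}}\norm\Delta\rho^{n+1}\norm_{L^2(\Omega)}$ or similar, and here one uses the bound $0<m\le\rho^{n+1}\le M$ from Proposition \ref{P32} (which bounds $\rho^{n+1}$, hence via \eqref{key} and Poincar\'e its $H^2$-norm in terms of $\norm\Delta\rho^{n+1}\norm_{L^2(\Omega)}$ — this is where the composite constant $\tilde C_\Omega = 1+2C_\Omega+2C_\Omega(1+A_P^2)^{1/2}A_P^{1/2}$ comes from). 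Then Young's inequality $\tilde C_\Omega M\norm\nabla v^n_\tau\norm\,\norm\Delta\rho^{n+1}\norm \le \frac{\theta}{2}\norm\Delta\rho^{n+1}\norm^2 + \frac{\tilde C_\Omega^2 M^2}{2\theta}\norm\nabla v^n_\tau\norm^2$ absorbs half the diffusion term, and multiplying through by $2\tau$ yields \eqref{P333}.

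\textbf{Expected difficulty.} The routine part is the two convexity inequalities and Young's inequality; the genuinely careful bookkeeping is in estimating the transport term $\int_\Omega(v^n_\tau\cdot\nabla\rho^{n+1})\Delta\rho^{n+1}\,dx$. One needs to avoid putting two derivatives on $\rho^{n+1}$ twice (which would not close) — instead, one bounds $\norm\nabla\rho^{n+1}\norm_{L^4}$ or $\norm\nabla\rho^{n+1}\norm_{L^\infty}$-type quantities by $\norm\rho^{n+1}\norm_{H^2}$ via Sobolev embedding/interpolation, then converts $\norm\rho^{n+1}\norm_{H^2}$ into $\norm\Delta\rho^{n+1}\norm_{L^2}$ plus lower-order terms using \eqref{key} and Poincar\'e's inequality (the density bound $m\le\rho^{n+1}\le M$ controlling the $L^2$-part of $\rho^{n+1}$ via $\int_\Omega\rho^{n+1}=\int_\Omega\rho^n$), and finally uses $\max\rho^{n+1}\le M$. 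Tracking constants through this chain is exactly what produces the stated $\tilde C_\Omega$, and getting that constant right (rather than merely "some constant") is the one place where care is essential.
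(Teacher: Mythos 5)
Your handling of \eqref{P331} and \eqref{P332} is correct and essentially the paper's argument (the paper applies Cauchy--Schwarz to $\int_\Omega\rho^n\rho^{n+1}dx$ where you use the convexity identity; these are interchangeable), and the skeleton of your argument for \eqref{P333} --- test with $-\Delta\rho^{n+1}$, handle the time difference at the gradient level, absorb the transport term by Young --- matches the paper. The genuine gap is in the mechanism you propose for the transport term $I=\int_\Omega(v^n_\tau\cdot\nabla\rho^{n+1})\Delta\rho^{n+1}dx$. You state the correct target bound $\tilde C_\Omega M\norm\nabla v^n_\tau\norm_{L^2(\Omega)^{3\times3}}\norm\Delta\rho^{n+1}\norm_{L^2(\Omega)}$, but H\"older plus Sobolev embedding/interpolation (bounding $\norm\nabla\rho^{n+1}\norm_{L^4}$ by $\norm\rho^{n+1}\norm_{H^2}$ and then by $\norm\Delta\rho^{n+1}\norm_{L^2}$ via \eqref{key}) cannot produce it: the integrand carries three derivatives of $\rho^{n+1}$ and none of $v^n_\tau$, while the target carries two derivatives of $\rho^{n+1}$ (plus the zero-order factor $M$) and one of $v^n_\tau$, and no combination of H\"older and embeddings transfers a derivative from one function to the other. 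Concretely, a split such as $\norm v^n_\tau\norm_{L^4}\norm\nabla\rho^{n+1}\norm_{L^4}\norm\Delta\rho^{n+1}\norm_{L^2}$ together with $\norm\nabla\rho^{n+1}\norm_{L^4}\lesssim\norm\rho^{n+1}\norm_{H^2}$ yields a bound of order $\norm\nabla v^n_\tau\norm_{L^2}\norm\Delta\rho^{n+1}\norm_{L^2}^{2}$ (or, with Gagliardo--Nirenberg, $M^{1/2}\norm\nabla v^n_\tau\norm_{L^2}\norm\Delta\rho^{n+1}\norm_{L^2}^{3/2}$); the former cannot be absorbed into $\theta\norm\Delta\rho^{n+1}\norm_{L^2}^2$ at all, and the latter leaves $\norm\nabla v^n_\tau\norm_{L^2}^4$ after Young, which neither gives \eqref{P333} as stated nor survives the telescoping energy estimate of Propositions \ref{P34} and \ref{P35}.

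The two ingredients you are missing are the following. First, integrate $I$ by parts using $\nabla\cdot v^n_\tau=0$ and $v^n_\tau\in C^\infty_{0,\sigma}(\Omega)$ to get $I=-\sum_{i,j=1}^3\int_\Omega(\p_{x_j}v^n_{\tau i})(\p_{x_i}\rho^{n+1})(\p_{x_j}\rho^{n+1})dx$ (the remaining term $\frac{1}{2}\int_\Omega(\nabla\cdot v^n_\tau)|\nabla\rho^{n+1}|^2dx$ vanishes); this is the step that places the derivative on $v^n_\tau$ and gives $|I|\le\norm\nabla v^n_\tau\norm_{L^2(\Omega)^{3\times 3}}\norm\,|\nabla\rho^{n+1}|^2\norm_{L^2(\Omega)}$. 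Second, the inequality \eqref{3key2}, $\norm\,|\nabla\rho|^2\norm_{L^2(\Omega)}\le\tilde C_\Omega\norm\rho\norm_{L^\infty(\Omega)}\norm\Delta\rho\norm_{L^2(\Omega)}$ for $\rho\in H^2_N(\Omega)\cap C^2(\bar\Omega)$, which is \emph{linear} in $\norm\Delta\rho\norm_{L^2(\Omega)}$ precisely because the other factor is $\norm\rho\norm_{L^\infty(\Omega)}\le M$; it is proved by integrating $\int_\Omega|\nabla\rho|^4dx$ by parts onto $\rho$ itself (the boundary term vanishing by the Neumann condition) and then applying \eqref{key} and Poincar\'e to $\rho-\bar\rho$, and it is \emph{not} a consequence of the embedding $H^2(\Omega)\hookrightarrow W^{1,4}(\Omega)$, which would only give the useless quadratic bound. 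Your remark that $\tilde C_\Omega$ arises from \eqref{key} and Poincar\'e is accurate as far as it goes, but without these two integrations by parts the estimate does not close.
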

\begin{proof}
Multiplying $\rho^{n+1}$ to the first line of \eqref{ellip1} and integrating it over $\Omega$, we obtain
\begin{eqnarray}\label{P3312}
&&\norm \rho^{n+1}\norm_{L^2(\Omega)}^2+\theta\norm\nabla\rho^{n+1}\norm_{L^2(\Omega)^3}^2\tau=\int_\Omega\rho^n\rho^{n+1}dx
\le\norm \rho^{n}\norm_{L^2(\Omega)}\norm \rho^{n+1}\norm_{L^2(\Omega)},
%\\\nonumber
%&&\qquad \le\2\norm \rho^{n}\norm_{L^2(\Omega)}^2+\2\norm \rho^{n+1}\norm_{L^2(\Omega)},
\end{eqnarray}
which implies \eqref{P331}. Applying \eqref{P331} to \eqref{P3312}, we see \eqref{P332}. 
Multiplying $\Delta\rho^{n+1}$ to the first line of \eqref{ellip1} and integrating it over $\Omega$, we have 
\begin{eqnarray*}
&&\theta\norm\Delta\rho^{n+1}\norm_{L^2(\Omega)}^2\tau
+\norm \nabla\rho^{n+1}\norm_{L^2(\Omega)^3}^2=\int_\Omega  \nabla\rho^{n}\cdot \nabla\rho^{n+1}dx-\tau\int_\Omega (v^n_\tau\cdot  \nabla\rho^{n+1})\Delta\rho^{n+1}dx\\
&&\quad \le \2\norm\nabla\rho^{n}\norm_{L^2(\Omega)^3}^2+\2\norm\nabla\rho^{n+1}\norm_{L^2(\Omega)^3}^2-\tau\int_\Omega (v^n_\tau\cdot  \nabla\rho^{n+1})\Delta\rho^{n+1}dx.
\end{eqnarray*}
Observe that 
\begin{eqnarray*}
I&:=&\int_\Omega (v^n_\tau\cdot  \nabla\rho^{n+1})\Delta\rho^{n+1}dx\\
&=&\int_\Omega\nabla\cdot\{(v^n_\tau\cdot  \nabla\rho^{n+1})\nabla\rho^{n+1}\}dx-\int_\Omega \{\nabla (v^n_\tau\cdot  \nabla\rho^{n+1})\}\cdot\nabla\rho^{n+1}dx\\
&=&-\int_\Omega \{\nabla (v^n_\tau\cdot  \nabla\rho^{n+1})\}\cdot\nabla\rho^{n+1}dx\\
&=& -\sum_{i,j=1}^3\int_\Omega  (\p_{x_j}v^n_{\tau i})(\p_{x_i}\rho^{n+1})(\p_{x_j}\rho^{n+1})dx- \sum_{i,j=1}^3\int_\Omega  (v^n_{\tau i})(\p_{x_j}\p_{x_i}\rho^{n+1})(\p_{x_j}\rho^{n+1})dx\\
&=& -\sum_{i,j=1}^3\int_\Omega  (\p_{x_j}v^n_{\tau i})(\p_{x_i}\rho^{n+1})(\p_{x_j}\rho^{n+1})dx- \2\sum_{i,j=1}^3\int_\Omega  (v^n_{\tau i})\p_{x_i}\{(\p_{x_j}\rho^{n+1})^2\}dx\\
&=&- \sum_{i,j=1}^3\int_\Omega  (\p_{x_j}v^n_{\tau i})(\p_{x_i}\rho^{n+1})(\p_{x_j}\rho^{n+1})dx+ \2\int_\Omega  (\nabla\cdot v^n_{\tau})|\nabla\rho^{n+1}|^2dx\\
&=& -\sum_{i,j=1}^3\int_\Omega  (\p_{x_j}v^n_{\tau i})(\p_{x_i}\rho^{n+1})(\p_{x_j}\rho^{n+1})dx,\\
|I|&\le& \norm \nabla v^n_\tau\norm_{L^2(\Omega)^{3\times3}}\norm |\nabla\rho^{n+1}|^2\norm_{L^2(\Omega)}.
\end{eqnarray*}
Note that, if we use $(v^n_\tau\cdot \nabla\rho^{n})$ instead of  $(v^n_\tau\cdot \nabla\rho^{n+1})$ in \eqref{ellip1},  the second derivative of $\rho^n$ or $\rho^{n+1}$ would remain  in the integral of $(v^n_\tau\cdot \nabla\rho^{n})\Delta\rho^{n+1}$, which would cause a serious trouble.     
If we show the estimate 
\begin{eqnarray}\label{3key2}
&&\norm |\nabla\rho|^2\norm_{L^2(\Omega)}\le \tilde{C}_\Omega \norm \rho\norm_{L^\infty(\Omega)}\norm \Delta\rho\norm_{L^2(\Omega)},\quad\forall\,\rho\in H^2_N(\Omega)\cap C^2(\bar{\Omega}), \\\nonumber 
&&\tilde{C}_\Omega:=1+2C_\Omega+2C_\Omega(1+A_P^2)^\2A_P^\2,
\end{eqnarray}
we find that 
\begin{eqnarray*}
|I|&\le& \tilde{C}_\Omega M \norm \nabla v^n_\tau\norm_{L^2(\Omega)^{3\times3}} \norm \Delta\rho\norm_{L^2(\Omega)}\\
&\le& \frac{\theta}{2}  \norm \Delta\rho\norm_{L^2(\Omega)}^2+\frac{\tilde{C}_\Omega^2 M^2}{2\theta}\norm \nabla v^n_\tau\norm_{L^2(\Omega)^{3\times3}}^2,
\end{eqnarray*}
which leads to \eqref{P333}. We prove \eqref{3key2}. Observe that for any $\rho\in H^2_N(\Omega)\cap C^2(\bar{\Omega})$,
\begin{eqnarray*}
J^2&:=&\norm |\nabla\rho|^2\norm_{L^2(\Omega)}^2=\int_{\Omega} |\nabla \rho|^4dx=\sum_{i,j=1}^3\int_\Omega (\p_{x_i}\rho)(\p_{x_i}\rho)(\p_{x_j}\rho)(\p_{x_j} \rho) dx\\
&=&\sum_{i,j=1}^3\Big[-\int_\Omega \rho\p_{x_i} \{(\p_{x_i} \rho)(\p_{x_j} \rho)(\p_{x_j} \rho)\} dx+\int_{\p\Omega} \rho(\p_{x_i} \rho)(\p_{x_j} \rho)(\p_{x_j} \rho)\nu^i dS\Big]\\
%&=&-\sum_{i,j=1}^3\int_\Omega \rho\p_{x_i} (\p_{x_i} \rho\p_{x_j} \rho\p_{x_j} \rho) dx\\
&=&-\sum_{i,j=1}^3\int_\Omega \rho (\p_{x_i}^2 \rho)(\p_{x_j} \rho)(\p_{x_j} \rho) dx -\sum_{i,j=1}^3\int_\Omega 2\rho (\p_{x_i} \rho)(\p_{x_j} \rho) (\p_{x_i}\p_{x_j} \rho) dx\\
&=&-\sum_{j=1}^3\int_\Omega\rho (\Delta \rho)( \p_{x_j} \rho)(\p_{x_j} \rho) dx - 2\sum_{i,j=1}^3\int_\Omega \rho (\p_{x_i} \rho)(\p_{x_j} \rho)( \p_{x_i}\p_{x_j} (\rho-\bar{\rho})) dx\\
&\le& \norm \rho \norm_{L^\infty(\Omega)}  \norm \Delta \rho\norm_{L^2(\Omega)} J
+2\norm \rho \norm_{L^\infty(\Omega)} \norm \rho -\bar{\rho}\norm_{H^2(\Omega)} J ,\,\,\,\,\,\,\bar{\rho}:={\rm vol}(\Omega)^{-1}\int_\Omega\rho dx.
\end{eqnarray*}
Hence, we see that 
\begin{eqnarray*}
J&\le&  \norm \rho \norm_{L^\infty(\Omega)}  \norm \Delta \rho\norm_{L^2(\Omega)} +2\norm \rho\norm_{L^\infty(\Omega)}\norm \rho -\bar{\rho}\norm_{H^2(\Omega)}\\
&\le&  \norm \rho \norm_{L^\infty(\Omega)}  \{\norm \Delta \rho\norm_{L^2(\Omega)} +2C_\Omega(\norm \Delta \rho\norm_{L^2(\Omega)}+\norm  \rho-\bar{\rho}\norm_{H^1(\Omega)})\} \mbox{ \,\,\,\,(due to \eqref{key})}.
\end{eqnarray*}
Poincar\'e's inequality implies that 
\begin{eqnarray*}
\norm  \rho-\bar{\rho}\norm_{H^1(\Omega)}^2
&=&\norm  \nabla\rho\norm_{L^2(\Omega)}^2+\norm  \rho-\bar{\rho}\norm_{L^2(\Omega)}^2\le (1+A_P^2)\norm  \nabla\rho\norm_{L^2(\Omega)}^2,\\
\norm  \nabla\rho\norm_{L^2(\Omega)}^2&=& \int_\Omega \nabla(\rho-\bar{\rho})\cdot \nabla(\rho-\bar{\rho}) dx=- \int_\Omega (\rho-\bar{\rho})(\Delta\rho) dx\\
&\le&  \norm  \rho-\bar{\rho}\norm_{L^2(\Omega)}\norm  \Delta\rho\norm_{L^2(\Omega)} = A_P\norm  \nabla\rho\norm_{L^2(\Omega)} \norm \Delta\rho \norm_{L^2(\Omega)},\\
\norm  \nabla\rho\norm_{L^2(\Omega)}&\le& A_P\norm \Delta\rho \norm_{L^2(\Omega)},\quad  \norm  \rho-\bar{\rho}\norm_{H^1(\Omega)}^2\le   (1+A_P^2)A_P\norm \Delta\rho \norm_{L^2(\Omega)}^2.
\end{eqnarray*}
Hence, we obtain 
\begin{eqnarray*}
J&\le& \{1+2C_\Omega+2C_\Omega(1+A_P^2)^\2A_P^\2\} \norm \rho \norm_{L^\infty(\Omega)}  \norm \Delta \rho\norm_{L^2(\Omega)}.
\end{eqnarray*}
\end{proof}
Next, we solve \eqref{ellip2} weakly in $H^1_{0,\sigma}(\Omega)$. 
Due to \eqref{div1} and \eqref{div2}, the Lax-Milgram bilinear form $\tilde{B}$ of \eqref{ellip2} is given as 
\begin{eqnarray*}
&&\tilde{B}:H^1_{0,\sigma}(\Omega)\times H^1_{0,\sigma}(\Omega)\to \R,\\
&& \tilde{B}(v,w):=\2\sum_{i,j=1}^3\int_\Omega\mu(\rho^{n+1})(\p_{x_j}v_i+\p_{x_i}v_j)(\p_{x_j}w_i+\p_{x_i}w_j)dx\\
&&\quad +\theta\sum_{i,j=1}^3\int_\Omega\{(\p_{x_i}\rho^{n+1})v_j(\p_{x_i}w_j)+(\p_{x_j}\rho^{n+1})v_i(\p_{x_i}w_j)  \}dx\\
&&\quad -\sum_{j=1}^3\int_\Omega \rho^{n+1}v^n_{\tau j} v\cdot(\p_{x_j}w)dx+\tau^{-1}\int_\Omega\rho^{n+1}v\cdot wdx.
\end{eqnarray*}  
Due to \eqref{div3} and \eqref{div4}, the weak form of \eqref{ellip2} becomes 
\begin{eqnarray}\label{weak31}
&&\tilde{B}(v,w)=g(w),\quad\forall\,w\in H^1_{0,\sigma}(\Omega),\\\nonumber
&&g(w):=\tau^{-1}\int_\Omega\rho^{n}v^n\cdot wdx
- \theta\sum_{i,j=1}^3\int_\Omega\frac{\mu'(\rho^{n+1})}{\rho^{n+1}}(\p_{x_i}\rho^{n+1})(\p_{x_j}\rho^{n+1})(\p_{x_j}w_i)dx\\\nonumber
&&\qquad\qquad  +\theta^2\sum_{i,j=1}^3\int_\Omega\frac{1}{\rho^{n+1}}(\p_{x_i}\rho^{n+1})(\p_{x_j}\rho^{n+1})(\p_{x_i}w_j)dx+\int_\Omega \rho^{n+1}f^{n+1}\cdot wdx.
\end{eqnarray}
\begin{Prop}\label{P34}
There exists a unique solution $v^{n+1}\in H^1_{0,\sigma}(\Omega)$ of \eqref{weak31}, which satisfies
\begin{eqnarray*}
&&\Big\{ \mu_\ast-\frac{\theta}{2}(M-m)  - \frac{\tilde{C}_\Omega^2 M^2}{2m}(\mu'{}^{\ast} +\theta)  \Big\}\norm \nabla v^{n+1}\norm_{L^2(\Omega)^{3\times3}}^2\tau  \\
&&\quad - \frac{\tilde{C}_\Omega^2 M^2}{2m}(\mu'{}^{\ast} +\theta)\norm \nabla v^{n}_\tau\norm_{L^2(\Omega)^{3\times3}}^2\tau\\
&&\quad +  \2\norm\sqrt{\rho^{n+1}}v^{n+1}\norm_{L^2(\Omega)^3}^2 
 + \frac{\theta}{2m}( \mu'{}^{\ast} +\theta) \norm\nabla \rho^{n+1}\norm_{L^2(\Omega)^3}^2 \\
&&\quad\quad\le \2\norm\sqrt{\rho^{n}}v^n\norm_{L^2(\Omega)^3}^2 
+ \frac{\theta}{2m}( \mu'{}^{\ast} +\theta) \norm\nabla \rho^{n}\norm_{L^2(\Omega)^3}^2 \\
&&\qquad\qquad  + \norm\sqrt{\rho^{n+1}}f^{n+1}\norm_{L^2(\Omega)^3} \norm\sqrt{\rho^{n+1}}v^{n+1}\norm_{L^2(\Omega)^3}\tau.
\end{eqnarray*} 
\end{Prop}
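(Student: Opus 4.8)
The plan is to obtain $v^{n+1}$ from the Lax--Milgram theorem applied to the bilinear form $\tilde B$ and the functional $g$ in \eqref{weak31}, and then to derive the energy inequality by testing \eqref{weak31} with $w=v^{n+1}$ and reorganizing the lower order terms by means of the discretized density equation \eqref{ellip1}. Boundedness of $\tilde B$ on $H^1_{0,\sigma}(\Omega)\times H^1_{0,\sigma}(\Omega)$ and of $g$ on $H^1_{0,\sigma}(\Omega)$ is routine: by Propositions~\ref{P31} and \ref{P32} one has $\rho^{n+1}\in C^2(\bar\Omega)$ with $m\le\rho^{n+1}\le M$, hence $\mu(\rho^{n+1}),\mu'(\rho^{n+1}),\nabla\rho^{n+1}$ are bounded on $\bar\Omega$, while $v^n_\tau\in C^\infty_{0,\sigma}(\Omega)$; Hölder's inequality, the embedding $H^1\hookrightarrow L^6$ and Poincar\'e's inequality then control every term by a constant times $\|v\|_{H^1}\|w\|_{H^1}$ (resp.\ $\|w\|_{H^1}$), where for the nonlinear terms of $g$ one also uses \eqref{3key2} to see $\||\nabla\rho^{n+1}|^2\|_{L^2}<\infty$.

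The substantive point of the first part is coercivity, which I would prove uniformly in $\tau$ by inserting \eqref{ellip1} into $\tilde B(v,v)$, $v\in H^1_{0,\sigma}(\Omega)$. The Korn-type identity $\tfrac12\sum_{i,j}\int_\Omega(\p_{x_j}v_i+\p_{x_i}v_j)^2dx=\|\nabla v\|_{L^2(\Omega)^{3\times3}}^2$ (valid since $\sum_{i,j}\int_\Omega(\p_{x_j}v_i)(\p_{x_i}v_j)dx=0$ for divergence-free $v$ vanishing on $\p\Omega$) bounds the viscous part below by $\mu_\ast\|\nabla v\|^2$. Integrating the convective term by parts ($\nabla\cdot v^n_\tau=0$) gives $\tfrac12\int_\Omega(v^n_\tau\cdot\nabla\rho^{n+1})|v|^2dx$; integration by parts ($v|_{\p\Omega}=0$, $\nabla\cdot v=0$) splits the $\theta$-linear term into $-\tfrac\theta2\int_\Omega(\Delta\rho^{n+1})|v|^2dx$ and $-\theta\sum_{i,j}\int_\Omega(\p_{x_i}\p_{x_j}\rho^{n+1})v_iv_jdx$; substituting $\theta\Delta\rho^{n+1}=v^n_\tau\cdot\nabla\rho^{n+1}+\tau^{-1}(\rho^{n+1}-\rho^n)$ from \eqref{ellip1} cancels the $v^n_\tau$-contributions and leaves, together with $\tau^{-1}\int_\Omega\rho^{n+1}|v|^2dx$, the nonnegative quantity $\tfrac{1}{2\tau}\int_\Omega(\rho^{n+1}+\rho^n)|v|^2dx$ plus the surviving piece $-\theta\sum_{i,j}\int_\Omega(\p_{x_i}\p_{x_j}\rho^{n+1})v_iv_jdx$. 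Two further integrations by parts ($\nabla\cdot v=0$, $v|_{\p\Omega}=0$) rewrite this piece as $-\theta\sum_{i,j}\int_\Omega\rho^{n+1}(\p_{x_i}v_j)(\p_{x_j}v_i)dx$, and since $\sum_{i,j}\int_\Omega(\p_{x_i}v_j)(\p_{x_j}v_i)dx=0$ it equals $-\theta\sum_{i,j}\int_\Omega(\rho^{n+1}-\tfrac{M+m}{2})(\p_{x_i}v_j)(\p_{x_j}v_i)dx$, whose absolute value is at most $\tfrac\theta2(M-m)\|\nabla v\|^2$. Thus $\tilde B(v,v)\ge(\mu_\ast-\tfrac\theta2(M-m))\|\nabla v\|^2$, which by (H4) and Poincar\'e's inequality for $H^1_{0,\sigma}(\Omega)$ is coercive; Lax--Milgram gives the unique $v^{n+1}$.

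For the estimate I would take $w=v^{n+1}$ in \eqref{weak31}. The identity $(\rho^{n+1}v^{n+1}-\rho^nv^n)\cdot v^{n+1}=\tfrac12(\rho^{n+1}|v^{n+1}|^2-\rho^n|v^n|^2)+\tfrac12(\rho^{n+1}-\rho^n)|v^{n+1}|^2+\tfrac{\rho^n}{2}|v^{n+1}-v^n|^2$ shows that the discrete time-derivative pair contributes at least $\tfrac{1}{2\tau}(\|\sqrt{\rho^{n+1}}v^{n+1}\|^2-\|\sqrt{\rho^n}v^n\|^2)+\tfrac{1}{2\tau}\int_\Omega(\rho^{n+1}-\rho^n)|v^{n+1}|^2dx$. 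The same reorganization as in the coercivity step---now keeping the genuine $\tau^{-1}(\rho^{n+1}-\rho^n)$ term---makes the convective term, the $-\tfrac\theta2\int_\Omega(\Delta\rho^{n+1})|v^{n+1}|^2dx$ part of the $\theta$-linear term, and $\tfrac{1}{2\tau}\int_\Omega(\rho^{n+1}-\rho^n)|v^{n+1}|^2dx$ collapse to $-\theta\sum_{i,j}\int_\Omega\rho^{n+1}(\p_{x_i}v^{n+1}_j)(\p_{x_j}v^{n+1}_i)dx\ge-\tfrac\theta2(M-m)\|\nabla v^{n+1}\|^2$, with the viscous term $\ge\mu_\ast\|\nabla v^{n+1}\|^2$. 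On the other side, the $\theta$- and $\theta^2$-nonlinearities are bounded by $\tfrac{\theta(\mu'{}^{\ast}+\theta)}{m}\||\nabla\rho^{n+1}|^2\|_{L^2}\|\nabla v^{n+1}\|_{L^2}\le\tfrac{\theta(\mu'{}^{\ast}+\theta)}{m}\tilde C_\Omega M\|\Delta\rho^{n+1}\|_{L^2}\|\nabla v^{n+1}\|_{L^2}$ via \eqref{3key2}; Young's inequality with the split assigning coefficient $\tfrac{\theta^2(\mu'{}^{\ast}+\theta)}{2m}$ to $\|\Delta\rho^{n+1}\|^2$ majorizes this by $\tfrac{\theta^2(\mu'{}^{\ast}+\theta)}{2m}\|\Delta\rho^{n+1}\|^2+\tfrac{(\mu'{}^{\ast}+\theta)\tilde C_\Omega^2 M^2}{2m}\|\nabla v^{n+1}\|^2$, and the forcing term is $\le\|\sqrt{\rho^{n+1}}f^{n+1}\|\,\|\sqrt{\rho^{n+1}}v^{n+1}\|$. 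Multiplying the resulting inequality by $\tau$ and adding $\tfrac{\theta(\mu'{}^{\ast}+\theta)}{2m}$ times \eqref{P333} cancels the $\|\Delta\rho^{n+1}\|^2\tau$ term, produces the telescoping $\|\nabla\rho^{n+1}\|^2,\|\nabla\rho^n\|^2$ terms and the $\tfrac{(\mu'{}^{\ast}+\theta)\tilde C_\Omega^2 M^2}{2m}\|\nabla v^n_\tau\|^2\tau$ term, and after rearrangement yields the asserted inequality.

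The main obstacle is the $\theta$-linear part: one has to recognize that, after using \eqref{ellip1} and integrating by parts, it reduces to $-\theta\sum_{i,j}\int_\Omega\rho^{n+1}(\p_{x_i}v^{n+1}_j)(\p_{x_j}v^{n+1}_i)dx$, and then exploit $\sum_{i,j}\int_\Omega(\p_{x_i}v_j)(\p_{x_j}v_i)dx=0$ for divergence-free $v$ with $v|_{\p\Omega}=0$ to replace $\rho^{n+1}$ by its oscillation $\rho^{n+1}-\tfrac{M+m}{2}$. This is precisely what produces the factor $\tfrac\theta2(M-m)$ rather than an uncontrollable multiple of $\|D^2\rho^{n+1}\|$, and it is what makes (H4) the natural smallness condition. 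A secondary point requiring care is choosing the Young split for the $(\nabla\rho^{n+1})^2$-nonlinearities so that the power of $\theta$ matches \eqref{P333} exactly, so that $\|\Delta\rho^{n+1}\|^2\tau$ can be absorbed and the $\|\nabla\rho\|^2$-telescoping emerges with the stated constant $\tfrac{\theta}{2m}(\mu'{}^{\ast}+\theta)$.
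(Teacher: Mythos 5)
Your proposal is correct and follows essentially the same route as the paper: Lax--Milgram with coercivity obtained by substituting the discrete density equation \eqref{ellip1} into $\tilde B(v,v)$ and replacing $\rho^{n+1}$ by $\rho^{n+1}-\tfrac{M+m}{2}$ in the surviving Hessian/cross-derivative term, followed by the energy estimate with $w=v^{n+1}$ in which \eqref{3key2} and \eqref{P333} absorb $\norm\Delta\rho^{n+1}\norm_{L^2(\Omega)}$ and produce the telescoping $\norm\nabla\rho\norm_{L^2(\Omega)}^2$ terms. The only differences are organizational (you integrate the $\theta$-linear term by parts before inserting \eqref{ellip1}, and apply Young before rather than after invoking \eqref{P333}), and these yield the identical constants.
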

\begin{proof}
Since $\rho^{n+1}\in C^2(\bar{\Omega})$ and $v^n_\tau\in C^\infty_{0,\sigma}(\Omega)$, there is a constant $C$ such that 
$$|\tilde{B}(v,w)|\le C\norm v\norm_{H^1(\Omega)^3}\norm w\norm_{H^1(\Omega)^3},\quad\forall\,v,w\in H^1_{0,\sigma}(\Omega).$$
We estimate $\tilde{B}(v,v)$ from the below. By Korn's inequality, we have 
\begin{eqnarray*}
\Big|\2\sum_{i,j=1}^3\int_\Omega\mu(\rho^{n+1})(\p_{x_j}v_i+\p_{x_i}v_j)^2dx\Big|\ge \mu_\ast \norm\nabla v\norm_{L^2(\Omega)^{3\times3}}^2.
\end{eqnarray*}
Observe that with \eqref{ellip1} and  $C^\infty_{0,\sigma}(\Omega)$-smooth approximation of $v\in H^1_{0,\sigma}(\Omega)$, 
\begin{eqnarray*}
 &&-\sum_{j=1}^3\int_\Omega \rho^{n+1}v^n_{\tau j} v\cdot(\p_{x_j}v)dx+\tau^{-1}\int_\Omega\rho^{n+1}v\cdot vdx\\
 &&\quad =\2\int_\Omega \tau^{-1}\rho^{n+1}|v|^2dx+\2\int_\Omega\{ \tau^{-1}\rho^{n+1}+(v^n_\tau\cdot\nabla\rho^{n+1})\}|v|^2 dx\\
&&\quad  =\2\int_\Omega \tau^{-1}\rho^{n+1}|v|^2dx+\2\int_\Omega (\tau^{-1}\rho^n+\theta\Delta\rho^{n+1})|v|^2dx\\
 &&\quad =\2\int_\Omega \tau^{-1}\rho^{n+1}|v|^2dx+\2\int_\Omega \tau^{-1}\rho^n|v|^2dx-\theta\sum_{i,j=1}^3\int_\Omega(\p_{x_i}\rho^{n+1})v_j(\p_{x_i}v_j)dx.
\end{eqnarray*}
Hence, we have 
\begin{eqnarray*}
&&\theta\sum_{i,j=1}^3\int_\Omega\{(\p_{x_i}\rho^{n+1})v_j(\p_{x_i}v_j)+(\p_{x_j}\rho^{n+1})v_i(\p_{x_i}v_j)  \}dx\\
&&\qquad -\sum_{j=1}^3\int_\Omega \rho^{n+1}v^n_{\tau j} v\cdot(\p_{x_j}w)dx+\tau^{-1}\int_\Omega\rho^{n+1}v\cdot v\\
&&\quad=\2\int_\Omega \tau^{-1}\rho^{n+1}|v|^2dx+\2\int_\Omega \tau^{-1}\rho^n|v|^2dx+\theta\sum_{i,j=1}^3\int_\Omega(\p_{x_j}\rho^{n+1})v_i(\p_{x_i}v_j)dx\\
&&\quad \ge \tau^{-1}m\norm v\norm_{L^2(\Omega)^3}^2-\theta \sum_{i,j=1}^3\int_\Omega(\p_{x_j}\rho^{n+1})v_i(\p_{x_i}v_j)dx.
\end{eqnarray*}
Observe that 
\begin{eqnarray*}
&&-\sum_{i,j=1}^3\int_\Omega (\p_{x_j}\rho^{n+1})v_i (\p_{x_i}v_j)dx
=\sum_{i,j=1}^3\int_\Omega \Big\{\rho^{n+1} (\p_{x_j}v_i)( \p_{x_i}v_j)+\rho^{n+1} v_i (\p_{x_j} \p_{x_i}v_j)\Big\}dx\\
&&\quad =\sum_{i,j=1}^3\int_\Omega \rho^{n+1} (\p_{x_j}v_i )(\p_{x_i}v_j) dx+\int_\Omega \rho^{n+1} (v\cdot\nabla) (\nabla\cdot v)dx\\
&&\quad =\sum_{i,j=1}^3\int_\Omega \rho^{n+1}(\p_{x_j}v_i)( \p_{x_i}v_j) dx,\\
&&\sum_{i,j=1}^3\int_\Omega (\p_{x_j}v_i)( \p_{x_i}v_j )dx
=-\sum_{i,j=1}^3\int_\Omega v_i  (\p_{x_j}\p_{x_i}v_j )dx
=-\int_\Omega (v\cdot\nabla) (\nabla\cdot v)dx=0. 
\end{eqnarray*}
Then, we have 
\begin{eqnarray*}
&&I:=-\sum_{i,j=1}^3\int_\Omega (\p_{x_j}\rho^{n+1})v_i (\p_{x_i}v_j)dx=\sum_{i,j=1}^3\int_\Omega \Big(\rho^{n+1}-\frac{M+m}{2}\Big) (\p_{x_j}v_i)( \p_{x_i}v_j) dx,\\
&&|I|\le\max_{x\in\Omega}  \Big|\rho^{n+1}-\frac{M+m}{2}\Big|\norm \nabla v\norm_{L^2(\Omega)^{3\times3}}^2 \le \frac{M-m}{2}\norm \nabla v\norm_{L^2(\Omega)^{3\times3}} ^2.
\end{eqnarray*}
Note that this estimate is given in Chapter 3, Section 4 of \cite{AKM}. Therefore, we obtain
\begin{eqnarray}\label{key3}
\tilde{B}(v,v)&\ge&  \2\int_\Omega \tau^{-1}\rho^{n+1}|v|^2dx+\2\int_\Omega \tau^{-1}\rho^n|v|^2dx\\\nonumber 
&&+ \mu_\ast \norm\nabla v\norm_{L^2(\Omega)^{3\times3}}^2-\theta \frac{M-m}{2}\norm \nabla v\norm_{L^2(\Omega)^{3\times3}} ^2\\\nonumber
&\ge& \min\Big\{ \tau^{-1}m, \mu_\ast-\theta\frac{M-m}{2}  \Big\}\norm v\norm_{H^1(\Omega)^3}^2,\quad \forall\,v\in H^1_{0,\sigma}(\Omega).
\end{eqnarray}
Due to (H4), Lax-Milgram theorem implies that there exists a unique solution  $v^{n+1}\in H^1_{0,\sigma}(\Omega)$ of \eqref{weak31}. 

Finally, we complete the energy estimate for \eqref{weak31}. By \eqref{div3}, \eqref{div4} and \eqref{3key2}, we have 
\begin{eqnarray*}
|g(v^{n+1})|&\le& \tau^{-1}\int_\Omega\rho^n v^n\cdot v^{n+1} dx+\theta \frac{\mu'{}^{\ast}}{m} \norm |\nabla\rho^{n+1}|^2\norm_{L^2(\Omega)}\norm \nabla v^{n+1}\norm_{L^2(\Omega)^{3\times3}}\\
&&+\theta^2\frac{1}{m}\norm |\nabla\rho^{n+1}|^2\norm_{L^2(\Omega)}\norm \nabla v^{n+1}\norm_{L^2(\Omega)^{3\times3}}
+\int_\Omega\rho^{n+1}f^{n+1}\cdot v^{n+1}dx \\
&\le& \2\int_\Omega \tau^{-1}\rho^{n}|v^n|^2dx+\2\int_\Omega \tau^{-1}\rho^n|v^{n+1}|^2dx +\int_\Omega\rho^{n+1}f^{n+1}\cdot v^{n+1}dx \\
&&+\theta\Big( \frac{\mu'{}^{\ast}}{m} +\frac{\theta}{m}\Big)
\tilde{C}_\Omega M \norm\Delta \rho^{n+1}\norm_{L^2(\Omega)}\norm \nabla v^{n+1}\norm_{L^2(\Omega)^{3\times3}}.
\end{eqnarray*}
\eqref{P333} gives 
\begin{eqnarray*}
 &&\norm\Delta \rho^{n+1}\norm_{L^2(\Omega)}\norm \nabla v^{n+1}\norm_{L^2(\Omega)^{3\times3}}\\
&& \le  \Big\{ \frac{\norm \nabla\rho^{n}\norm_{L^2(\Omega)}^2 -\norm \nabla\rho^{n+1}\norm_{L^2(\Omega)}^2}{\theta\tau}     +\frac{\tilde{C}_\Omega^2M^2}{\theta^2} \norm\nabla v^n_\tau\norm_{L^2(\Omega)^{3\times3}}^2 \Big\}^\2 \norm \nabla v^{n+1}\norm_{L^2(\Omega)^{3\times3}}\\
&&=\frac{\tilde{C}_\Omega M}{\theta}
 \Big\{\frac{\theta (\norm \nabla\rho^{n}\norm_{L^2(\Omega)}^2 -\norm \nabla\rho^{n+1}\norm_{L^2(\Omega)}^2)}{\tilde{C}_\Omega^2 M^2\tau}  + \norm\nabla v^n_\tau\norm_{L^2(\Omega)^{3\times3}}^2 \Big\}^\2 \norm \nabla v^{n+1}\norm_{L^2(\Omega)^{3\times3}}\\
 &&\le \frac{1}{2 \tilde{C}_\Omega M\tau}  \Big(\norm \nabla\rho^{n}\norm_{L^2(\Omega)}^2 -\norm \nabla\rho^{n+1}\norm_{L^2(\Omega)}^2\Big)  \\
&&\quad  +  \frac{\tilde{C}_\Omega M}{2\theta}\norm\nabla v^n_\tau\norm_{L^2(\Omega)^{3\times3}}^2   
  + \frac{\tilde{C}_\Omega M}{2\theta}  \norm \nabla v^{n+1}\norm_{L^2(\Omega)^{3\times3}}^2.
\end{eqnarray*}
Hence, \eqref{key3} implies that 
\begin{eqnarray*}
&& \Big\{\Big(\mu_\ast-\theta\frac{M-m}{2} \Big) - \frac{\tilde{C}_\Omega^2 M^2}{2m}(\mu'{}^{\ast} +\theta) \Big\} \norm \nabla v^{n+1}\norm_{L^2(\Omega)^{3\times3}}^2\\
&&\quad -\frac{\tilde{C}_\Omega^2 M^2}{2m}(\mu'{}^{\ast} +\theta)\norm \nabla v^n_\tau\norm_{L^2(\Omega)^{3\times3}}^2\\
&&\quad +  \frac{\tau^{-1}}{2}\norm\sqrt{\rho^{n+1}}v^{n+1}\norm_{L^2(\Omega)^3}^2
 + \frac{\tau^{-1}}{2m}( \mu'{}^{\ast} +\theta)\theta \norm\nabla \rho^{n+1}\norm_{L^2(\Omega)^3}^2 \\
&&\le \frac{\tau^{-1}}{2}\norm\sqrt{\rho^{n}}v^n\norm_{L^2(\Omega)^3}^2 
+\frac{\tau^{-1}}{2m}( \mu'{}^{\ast} +\theta)\theta\norm\nabla \rho^{n}\norm_{L^2(\Omega)^3}^2 \\
&&\qquad + \norm\sqrt{\rho^{n+1}}f^{n+1}\norm_{L^2(\Omega)^3} \norm\sqrt{\rho^{n+1}}v^{n+1}\norm_{L^2(\Omega)^3} .
\end{eqnarray*}
\end{proof}
%%%%%%
\begin{Prop}\label{P35}
%Let $0<\tau\le \2$. 
 The solutions $\rho^{n+1}$, $v^{n+1}$ of \eqref{ellip1}, \eqref{ellip2} satisfy for any $n\ge0$, 
\begin{eqnarray}\label{bbb1}
&&m\norm v^{n+1}\norm_{L^2(\Omega)^3}^2\le \norm\sqrt{\rho^{n+1}}v^{n+1}\norm_{L^2(\Omega)^3}^2\le e^{2n\tau}c_{n} ,\\\label{bbb2}
&&\alpha_2\norm \nabla\rho^{n+1}\norm_{L^2(\Omega)^3}^2\le c_{n}+\sum_{k=0}^n e^{2k\tau}c_{k}\tau,\\ \label{bbb3}
&&2\alpha_1\sum_{k=0}^n\norm \nabla v^{k+1}\norm_{L^2(\Omega)^{3\times3}}^2\tau   
 \le  c_{n}+\sum_{k=0}^n e^{2k\tau}c_{k}\tau,
\end{eqnarray} 
where 
\begin{eqnarray*}
&&\alpha_1:= \mu_\ast-\frac{\theta}{2}(M-m)  - \frac{\tilde{C}_\Omega^2 M^2}{m}(\mu'{}^{\ast} +\theta)  \quad\mbox{($\alpha_1>0$ due to (H3))},\quad 
\alpha_2:=\frac{ \theta}{m}(\mu'{}^{\ast} +\theta),\\
&&c_{n}:= M\norm u\norm_{L^2(\Omega)^3}^2 
 +\alpha_2 \norm\nabla \eta \norm_{L^2(\Omega)^3}^2
  +M\norm f\norm_{L^2([0,n\tau+\tau];L^2(\Omega)^3)}\\
&&\qquad \quad +\frac{\tilde{C}_\Omega^2 M^2}{m}(\mu'{}^{\ast} +\theta)n\tau^2+\alpha_2\tau.
\end{eqnarray*}
%Note that $c_{n(t,\tau)}$ with $n(t,\tau)$ such that $\tau n(t,\tau)\in[t,t+\tau)$ is bounded independently from $0<\tau\le \2$ for each $t\ge0$. 
\end{Prop}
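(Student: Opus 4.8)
The plan is to read the one--step inequality of Proposition~\ref{P34} as a discrete differential inequality for the energy
\[
E_n:=\2\|\sqrt{\rho^n}v^n\|_{L^2(\Omega)^3}^2+\frac{\theta}{2m}(\mu'{}^{\ast}+\theta)\|\nabla\rho^n\|_{L^2(\Omega)^3}^2 ,
\]
and to extract \eqref{bbb1}--\eqref{bbb3} from it by summation in $n$ and a discrete Gronwall inequality. Writing $B:=\frac{\tilde{C}_\Omega^2M^2}{2m}(\mu'{}^{\ast}+\theta)$, so that the coefficient of $\|\nabla v^{n+1}\|^2\tau$ in Proposition~\ref{P34} equals $\alpha_1+B$ and $\alpha_1=(\alpha_1+B)-2B$, that inequality rewrites as
\[
E_{n+1}+\alpha_1\|\nabla v^{n+1}\|^2\tau\le E_n+B\bigl(\|\nabla v^n_\tau\|^2-\|\nabla v^{n+1}\|^2\bigr)\tau+R_n ,
\]
where $R_n:=\|\sqrt{\rho^{n+1}}f^{n+1}\|_{L^2(\Omega)^3}\|\sqrt{\rho^{n+1}}v^{n+1}\|_{L^2(\Omega)^3}\tau$ and, here and below, $\nabla$--norms are in $L^2(\Omega)^{3\times3}$.

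First I would estimate the forcing. Since $m\le\rho^{n+1}\le M$ by Proposition~\ref{P32}, and $\tau\|f^{n+1}\|_{L^2(\Omega)^3}^2\le\|f\|_{L^2([n\tau,n\tau+\tau];L^2(\Omega)^3)}^2$ by \eqref{333fff} and the Cauchy--Schwarz inequality, Young's inequality gives $R_n\le\tau E_{n+1}+\frac M2\|f\|_{L^2([n\tau,n\tau+\tau];L^2(\Omega)^3)}^2$; absorbing $\tau E_{n+1}$ on the left and using $\tau\le\2$ replaces $E_{n+1}$ on the right by $(1+2\tau)E_n$ plus a multiple of the localized $L^2$--in--time norm of $f$, and it is this $1+2\tau$ factor that, after iteration, produces the amplification $e^{2k\tau}$. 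For the lagged term I would use $\|v^n_\tau-v^n\|_{H^1(\Omega)^3}\le\tau$ and the convention $v^0_\tau=0$ (which lets one avoid the undefined quantity $\|\nabla v^0\|=\|\nabla u\|$, $u$ being only $L^2$): from $\|\nabla v^n_\tau\|^2\le\|\nabla v^n\|^2+2\tau\|\nabla v^n\|+\tau^2$, the part $B(\|\nabla v^n\|^2-\|\nabla v^{n+1}\|^2)\tau$ telescopes under summation, the cross term is split by Young's inequality into a small multiple of $\|\nabla v^n\|^2\tau$ (reabsorbed into $\alpha_1\|\nabla v^{n+1}\|^2\tau$ on the left after an index shift, which is where the margin $\alpha_1>0$ from (H4) enters) plus an $O(\tau^2)$ error, and the accumulated errors sum to the term $\frac{\tilde{C}_\Omega^2M^2}{m}(\mu'{}^{\ast}+\theta)n\tau^2$ of $c_n$; likewise $\|\eta_\tau-\eta\|_{H^1(\Omega)}\le\tau$ and $m\le\eta_\tau\le M$ bound $E_0$ by $\2 M\|u\|_{L^2(\Omega)^3}^2+\frac{\theta}{2m}(\mu'{}^{\ast}+\theta)\|\nabla\eta\|_{L^2(\Omega)^3}^2$ up to an $O(\tau)$ error, which, with the above, accounts for the remaining terms of $c_n$.

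Summing the reduced inequality over $n$, the $E_k$ telescope, the telescoping part of the lagged term cancels the residual $-B\|\nabla v^{n+1}\|^2\tau$, and the index shift leaves a strictly positive dissipation coefficient, giving a bound of the shape
\[
E_{n+1}+c_\ast\sum_{k=0}^n\|\nabla v^{k+1}\|^2\tau\le K c_n+2\tau\sum_{k=0}^nE_{k+1}
\]
for positive constants $c_\ast,K$ (the constant $K$ is absorbed into $c_n$, which is built with slack over $E_0$). Dropping the dissipation sum and applying the discrete Gronwall inequality (ratio $1+2\tau\le e^{2\tau}$) to the $E$--part yields $E_{n+1}\le\2 e^{2n\tau}c_n$, hence \eqref{bbb1} via $m\|v^{n+1}\|_{L^2(\Omega)^3}^2\le\|\sqrt{\rho^{n+1}}v^{n+1}\|_{L^2(\Omega)^3}^2\le2E_{n+1}$, and also $\sum_{k=0}^nE_{k+1}\le\2\sum_{k=0}^ne^{2k\tau}c_k$; feeding this into the displayed bound, the retained dissipation sum gives \eqref{bbb3} and the inequality $\alpha_2\|\nabla\rho^{n+1}\|_{L^2(\Omega)^3}^2=\frac{\theta}{m}(\mu'{}^{\ast}+\theta)\|\nabla\rho^{n+1}\|_{L^2(\Omega)^3}^2\le2E_{n+1}$ gives \eqref{bbb2}.

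The step I expect to be the main obstacle is the lagged dissipation term $B\|\nabla v^n_\tau\|^2\tau$ on the right of Proposition~\ref{P34}: it lives at the previous time level, is evaluated at the regularization $v^n_\tau$ rather than at $v^n$, and its coefficient $B$ falls short of the current--level coefficient $\alpha_1+B$ only by the margin $\alpha_1>0$ supplied by (H4). One must ensure that the index shift under the summation does not degenerate the residual dissipation coefficient, that the regularization and time--averaging errors contribute only the explicit $O(n\tau^2)$ and $O(\tau)$ quantities recorded in $c_n$ (nothing unbounded -- this is where $v^0_\tau=0$ is used), and that the $1+2\tau$ amplification from the forcing integrates to $e^{2n\tau}$; arranging the bookkeeping so that the constants in \eqref{bbb1}--\eqref{bbb3} come out exactly as stated is the delicate part.
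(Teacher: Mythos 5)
Your proposal is correct and follows essentially the same route as the paper: sum the one-step estimate of Proposition \ref{P34} over $k=0,\dots,n$, absorb the lagged term $\norm\nabla v^k_\tau\norm^2$ into the current-level dissipation using $v^0_\tau=0$ and $\norm v^k_\tau-v^k\norm_{H^1(\Omega)^3}\le\tau$ (producing the $n\tau^2$ contribution to $c_n$), bound the forcing by Cauchy--Schwarz/Young, and close with a discrete Gronwall inequality via $1+2\tau\le e^{2\tau}$. The only cosmetic difference is that you run Gronwall on the pointwise energy $E_{n+1}$ while the paper runs it on the cumulative sum $X^{n+1}=\sum_{k\le n}\norm\sqrt{\rho^{k+1}}v^{k+1}\norm_{L^2(\Omega)^3}^2\tau$ and recovers the pointwise bound from $(X^{n+1}-X^n)/\tau\le c_n+X^{n+1}$; both yield the stated constants.
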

%%%%%%
\begin{proof}
By Proposition \ref{P34}, we have for each $n\ge0$, 
\begin{eqnarray*}
&&\Big\{ \mu_\ast-\frac{\theta}{2}(M-m)  - \frac{\tilde{C}_\Omega^2 M^2}{2m}(\mu'{}^{\ast} +\theta)  \Big\}\sum_{k=0}^n\norm \nabla v^{k+1}\norm_{L^2(\Omega)^{3\times3}}^2\tau  \\
&&\quad - \frac{\tilde{C}_\Omega^2 M^2}{2m}(\mu'{}^{\ast} +\theta)\sum_{k=0}^n\norm \nabla v^{k}_\tau\norm_{L^2(\Omega)^{3\times3}}^2\tau\\
&&\quad +  \2\norm\sqrt{\rho^{n+1}}v^{n+1}\norm_{L^2(\Omega)^3}^2 
 +\frac{\alpha_2}{2} \norm\nabla \rho^{n+1}\norm_{L^2(\Omega)^3}^2 \\
&&\quad\quad\le \2\norm\sqrt{\rho^{0}}v^0\norm_{L^2(\Omega)^3}^2 
+\frac{\alpha_2}{2} \norm\nabla \rho^{0}\norm_{L^2(\Omega)^3}^2 \\
&&\qquad\qquad  + \sum_{k=0}^n\norm\sqrt{\rho^{k+1}}f^{k+1}\norm_{L^2(\Omega)^3} \norm\sqrt{\rho^{k+1}}v^{k+1}\norm_{L^2(\Omega)^3}\tau.
\end{eqnarray*} 
Since $v^0=u$, $v^0_\tau=0$ and $\norm v^{n+1}_\tau-v^{n+1}\norm_{H^1(\Omega)^3}\le\tau$, we have 
\begin{eqnarray}\label{333333bbbbb}
\sum_{k=0}^n\norm \nabla v^{k}_\tau\norm_{L^2(\Omega)^{3\times3}}^2\tau\le \sum_{k=0}^n\norm \nabla v^{k+1}\norm_{L^2(\Omega)^{3\times3}}^2\tau+n\tau^2,\quad n\ge0.
\end{eqnarray}
Hence, noting  that $\rho^0=\eta_\tau$ with $\norm \eta_\tau-\eta\norm_{H^1(\Omega)}\le \tau$, we have 
\begin{eqnarray}\label{35kkkk}
&&\alpha_1\sum_{k=0}^n\norm \nabla v^{k+1}\norm_{L^2(\Omega)^{3\times3}}^2\tau  
+  \2\norm\sqrt{\rho^{n+1}}v^{n+1}\norm_{L^2(\Omega)^3}^2 
 +\frac{\alpha_2}{2} \norm\nabla \rho^{n+1}\norm_{L^2(\Omega)^3}^2 \\\nonumber
&&\le \frac{M}{2}\norm u\norm_{L^2(\Omega)^3}^2 
+\frac{\alpha_2}{2} \norm\nabla \eta \norm_{L^2(\Omega)^3}^2+\frac{\tilde{C}_\Omega^2 M^2}{2m}(\mu'{}^{\ast} +\theta)n\tau^2+\frac{\alpha_2}{2}\tau  \\\nonumber
&&\quad  + \sum_{k=0}^n\norm\sqrt{\rho^{k+1}}f^{k+1}\norm_{L^2(\Omega)^3} \norm\sqrt{\rho^{k+1}}v^{k+1}\norm_{L^2(\Omega)^3}\tau.
\end{eqnarray} 
Set $\frac{\tilde{c}_n}{2}:= \frac{M}{2}\norm u\norm_{L^2(\Omega)^3}^2 
+\frac{\alpha_2}{2} \norm\nabla \eta \norm_{L^2(\Omega)^3}^2+\frac{\tilde{C}_\Omega^2 M^2}{2m}(\mu'{}^{\ast} +\theta)n\tau^2+\frac{\alpha_2}{2}\tau$. Then, we have 
\begin{eqnarray*}
&& \2\norm\sqrt{\rho^{n+1}}v^{n+1}\norm_{L^2(\Omega)^3}^2 \le \frac{\tilde{c}_n}{2}+ \sum_{k=0}^n\norm\sqrt{\rho^{k+1}}f^{k+1}\norm_{L^2(\Omega)^3} \norm\sqrt{\rho^{k+1}}v^{k+1}\norm_{L^2(\Omega)^3}\tau\\
&&\quad\le \frac{\tilde{c}_n}{2}+\frac{M}{2} \sum_{k=0}^n\norm f^{k+1}\norm_{L^2(\Omega)^3}^2\tau+\frac{1}{2} \sum_{k=0}^n\norm\sqrt{\rho^{k+1}}v^{k+1}\norm_{L^2(\Omega)^3}^2\tau, 
\end{eqnarray*}
where 
\begin{eqnarray*}
&&\sum_{k=0}^n\norm f^{k+1}\norm_{L^2(\Omega)^3}^2\tau
=\sum_{k=0}^n\int_\Omega\Big|\int_{k\tau}^{k\tau+\tau} f(t,x)dt  \Big|^2dx\tau^{-1}\\
&&\quad \le\sum_{k=0}^n\int_\Omega\Big\{\Big(\int_{k\tau}^{k\tau+\tau} |f(t,x)|^2dt  \Big)^\2\Big(\int_{k\tau}^{k\tau+\tau}1^2dt  \Big)^\2 \Big\}^2dx\tau^{-1}\\
&&\quad  =\sum_{k=0}^n\int_\Omega\int_{k\tau}^{k\tau+\tau}|f(t,x)|^2dtdx=\norm f\norm_{L^2([0,n\tau+\tau];L^2(\Omega)^3)}^2.
\end{eqnarray*}
Setting $\frac{c_{n}}{2}:=\frac{\tilde{c}_n}{2}+\frac{M}{2}\norm f\norm_{L^2([0,n\tau+\tau];L^2(\Omega)^3)}^2$ and $X^0:=0,\,\,\,X^{n+1}:=(\norm\sqrt{\rho^{1}}v^{1}\norm_{L^2(\Omega)^3}^2+\cdots+\norm\sqrt{\rho^{n+1}}v^{n+1}\norm_{L^2(\Omega)^3}^2)\tau$,  we have 
$$\frac{X^{n+1}-X^n}{\tau}\le c_{n}+X^{n+1},\quad n\ge0.$$
Since $c_0\le c_1\le \cdots$, we have for each $n\ge0$,
$$\frac{X^{k+1}-X^k}{\tau}\le c_{n}+X^{k+1},\quad 0\le k\le n.$$
Hence, as $0<\tau\le\2$, we have for all  $0\le k\le n$,
\begin{eqnarray*}
X^{k+1}+c_n \le (1+2\tau)(X^k+c_n)\le (1+2\tau)^k c_n\le e^{2k\tau}c_n,
\end{eqnarray*}
which leads to 
$$\norm\sqrt{\rho^{n+1}}v^{n+1}\norm_{L^2(\Omega)^3}^2=\frac{X^{n+1}-X^n}{\tau}\le c_{n}+X^{n+1}\le e^{2n\tau}c_n.$$
This estimate and \eqref{35kkkk} yield \eqref{bbb1}-\eqref{bbb3}. 
\end{proof}
%%%%%%%%%%%%%%%%%%%
\setcounter{section}{3}
\setcounter{equation}{0}
\section{Proof of technical lemmas}

We prove Lemma \ref{key-lemma1} and Lemma \ref{key-lemma2}. 

 \begin{proof}[{\bf Proof of  Lemma \ref{key-lemma1}.}] The proof is similar to that of Lemma 4.3 in Soga \cite{Soga2023}. 
Note that (A2) implies that the value $\rho_k(t,\cdot)$ is determined for every $t\in[0,T]$ (one cannot change the value even on a null set of $[0,T]$). 
We use an Ascoli-Arzela type reasoning. Set $\{s_k\}_{k\in\N}:=\Q\cap[0,T]$. Since $\{\rho_k(s_1,\cdot)\}_{k\in \N}$ is bounded in $L^2(\Omega)$, there exists a subsequence   $\{\rho_{1l}\}_{l\in\N}\subset \{\rho_k\}_{k\in\N}$ and $\rho(s_1,\cdot)\in L^2(\Omega)$ such that $\rho_{1l}(s_1,\cdot)\wto \rho(s_1,\cdot)$ in $L^2(\Omega)$ as $l\to\infty$.  
 It holds that $\alpha\le\rho(s_1,\cdot)\le \beta$. 
 In fact,  set $\tilde{\rho}(x):=\min\{ \rho(s_1,x)-\alpha,0 \}:  \Omega\to\R_{\le0}$; 
 since $\rho_{1l}(s_1,\cdot)-\alpha\ge0$ a.e. by assumption, we have $(\rho_{1l}(s_1,\cdot)-\alpha, \tilde{\rho})_{L^2(\Omega)}\le0$ for all $l$ and  $(\rho_{1l}(s_1,\cdot)-\alpha, \tilde{\rho})_{L^2(\Omega)}\to (\rho(s_1,\cdot)-\alpha, \tilde{\rho})_{L^2(\Omega)}=\norm\tilde{\rho}\norm_{L^2(\Omega)}^2$ as $l\to\infty$; hence $\norm\tilde{\rho}\norm_{L^2(\Omega)}^2\le0$ and $\tilde{\rho}=0$, i.e., $\rho(s_1,\cdot)\ge\alpha$; 
similarly, set $\tilde{\rho}(x):=\min\{ \beta-\rho(s_1,\cdot),0 \}:  \Omega\to\R_{\le0}$; since $\beta-\rho_{1l}(s_1,\cdot)\ge0$ a.e. by assumption, we have $(\beta-\rho_{1l}(s_1,\cdot), \tilde{\rho})_{L^2(\Omega)}\le0$ for all $l$ and  $(\beta-\rho_{1l}(s_1,\cdot), \tilde{\rho})_{L^2(\Omega)}=(\beta, \tilde{\rho})_{L^2(\Omega)}-(\rho_{1l}(s_1,\cdot), \tilde{\rho})_{L^2(\Omega)}\to (\beta-\rho(s_1,\cdot), \tilde{\rho})_{L^2(\Omega)}=\norm\tilde{\rho}\norm_{L^2(\Omega)}^2$ as $l\to\infty$; hence $\norm\tilde{\rho}\norm_{L^2(\Omega)}^2\le0$ and $\tilde{\rho}=0$, i.e., $\rho(s_1,\cdot)\le\beta$. 

Since $\{\rho_{1l}(s_2,\cdot)\}_{l\in\N}$ is bounded in $L^2(\Omega)$, there exists a subsequence   $\{\rho_{2l}\}_{l\in\N}\subset \{\rho_{1l}\}_{l\in\N}$ and $\rho(s_2,\cdot)\in L^2(\Omega)$ such that $\rho_{2l}(s_2,\cdot)\wto \rho(s_2,\cdot)$ in $L^2(\Omega)$ as $l\to\infty$, where $\alpha\le\rho(s_2,\cdot)\le \beta$. Repeating this process, we obtain a subsequence   $\{\rho_{k+1l}\}_{l\in\N}\subset \{\rho_{kl}\}_{l\in\N}$ and $\rho(s_{k+1},\cdot)\in L^2(\Omega)$ such that $\rho_{k+1l}(s_{k+1},\cdot)\wto \rho(s_{k+1},\cdot)$ in $L^2(\Omega)$ as $l\to\infty$ with $\alpha\le\rho(s_{k+1},\cdot)\le \beta$, for each $k\in\N$.  It is clear that the sequence $\{ \rho_{kk}  \}_{k\in\N}\subset \{\rho_k\}_{k\in\N}$ satisfies 
$$\rho_{kk}(s_{k'},\cdot)\wto \rho(s_{k'},\cdot)\mbox{\quad in $L^2(\Omega)$ as $k\to\infty$, $\forall\,k'\in\N$}.$$

Hereafter, we re-write $\{ \rho_{kk}  \}_{k\in\N}$ as   $\{\rho_k\}_{k\in\N}$.  
In order to see weak convergence of $\{\rho_k(t,\cdot)\}_{k\in\N}$ for all $t\in[0,T]$, we use weak equi-continuity of $(\rho_k,\phi)_{L^2(\Omega)}$ for each fixed $\phi\in C^\infty_0(\Omega)$, i.e., for any $\ep>0$, there exists $\delta=\delta(\ep,\phi)>0$ such that  
$$|\tilde{t}-t|<\delta\Rightarrow |(\rho_k(\tilde{t},\cdot),\phi)_{L^2(\Omega)}-(\rho_k(t,\cdot),\phi)_{L^2(\Omega)}|<\frac{\ep}{3},\quad\forall\,k\in\N.$$
For $\delta=\delta(\ep,\phi)$, introduce $I_0:=[0,\delta],I_1:=[\delta,2\delta],\ldots,I_{J}:=[J\delta,T]$, where $J=J(\ep,\phi)$. Take a rational number $\tilde{s}_j$ from the interior of each $I_j$, $0\le j\le J$ ($0\le j\le J-1$ if $J\delta=T$). For any $t\in [0,T]$, we find $I_j$ such that $t\in I_j$, where $|t-\tilde{s}_j|<\delta$. 
Since $\{(\rho_k(\tilde{s}_j,\cdot),\phi)_{L^2(\Omega)}\}_{k\in\N}$ is a convergent sequence of $\R$, there exists $K_j\in \N$ such that if $k,k'\ge K_j$ we have 
 $$|(\rho_{k'}(\tilde{s}_j,\cdot),\phi)_{L^2(\Omega)}-(\rho_k(\tilde{s}_j,\cdot),\phi)_{L^2(\Omega)}|< \frac{\ep}{3}.$$
 Set $K:=\max\{K_0,K_1\ldots,K_J\}.$
Then, we  have for any $k,k'\ge K$, 
\begin{eqnarray*}
&&|(\rho_{k'}(t,\cdot),\phi)_{L^2(\Omega)}-(\rho_k(t,\cdot),\phi)_{L^2(\Omega)}|
\le |(\rho_{k'}(t,\cdot),\phi)_{L^2(\Omega)}-(\rho_{k'}(\tilde{s}_j,\cdot),\phi)_{L^2(\Omega)}|\\
&&\quad +|(\rho_{k'}(\tilde{s}_j,\cdot),\phi)_{L^2(\Omega)}-(\rho_k(\tilde{s}_j,\cdot),\phi)_{L^2(\Omega)}|
+ |(\rho_{k}(\tilde{s}_j,\cdot),\phi)_{L^2(\Omega)}-(\rho_k(t,\cdot),\phi)_{L^2(\Omega)}|\\
&&\quad< \frac{\ep}{3}+\frac{\ep}{3}+\frac{\ep}{3}
=\ep.
\end{eqnarray*} 
Therefore, $\{(\rho_k(t,\cdot),\phi)_{L^2(\Omega)}\}_{k\in\N}$ is a convergent sequence of $\R$ for any $\phi\in C^\infty_0(\Omega)$.
 
On the other hand, since $\{\rho_k(t,\cdot)\}_{k\in\N}$ is bounded in $L^2(\Omega)$, we have a subsequence $\{\tilde{\rho}_k(t,\cdot)\}_{k\in\N}\subset \{\rho_k(t,\cdot)\}_{k\in\N}$ and $\rho(t,\cdot)\in L^2(\Omega)$ such that  $\alpha\le\rho(t,\cdot)\le \beta$ and 
$$\tilde{\rho}_k(t,\cdot)\wto \rho(t,\cdot)\mbox{\quad in $L^2(\Omega)$ as $k\to\infty$},$$
which implies that 
$$\lim_{k\to\infty} (\rho_k(t,\cdot),\phi)_{L^2(\Omega)}=\lim_{k\to\infty} (\tilde{\rho}_k(t,\cdot),\phi)_{L^2(\Omega)} =(\rho(t,\cdot),\phi)_{L^2(\Omega)},\quad\forall\,\phi\in C^\infty_0(\Omega).$$
Since $C^\infty_0(\Omega)$ is dense in $L^2(\Omega)$,  we conclude that   $\rho_k(t,\cdot)\wto \rho(t,\cdot)$ in $L^2(\Omega)$ as $k\to\infty$ for every $t\in [0,T]$. 
\end{proof}
%%%%%%%%%%%%%%
\begin{proof}[{\bf Proof of  Lemma \ref{key-lemma2}.}]
The proof is similar to that of Lemma 4.4 in \cite{Soga2023}.  
First we find $A_\lambda$ for each fixed $t\in[0,T]$.  
%For simpler notation, set 
%$$w^i_m=w^i_m(t):= v_{\alpha^i(m)}(t,\cdot).$$ 
Suppose that the assertion does not hold. 
Then, there exists some constant $\lambda_0>0$ for which there exist $k(i), l(i)\in\N$ for each $i\in\N$  such that 
\begin{eqnarray}\label{6161}
&&\norm v_{k(i)}(t,\cdot)-v_{l(i)}(t,\cdot)\norm_{L^2(\Omega)^3}\\\nonumber 
&&\quad > \lambda_0 (\norm v_{k(i)}(t,\cdot)\norm_{H^1(\Omega)^3} +\norm v_{l(i)}(t,\cdot)\norm_{H^1(\Omega)^3}+k(i)^{-1}+l(i)^{-1}) \\\nonumber
&&\qquad+i\Big(\sup_{\varphi\in S}\Big|\Big( \rho_{k(i)}(t,\cdot)v_{k(i)}(t,\cdot)-\rho_{l(i)}(t,\cdot)v_{l(i)}(t,\cdot),\varphi \Big)_{L^2(\Omega)^3}\Big|+k(i)^{-1}+l(i)^{-1}\Big). 
\end{eqnarray}
Due to the presence of $k(i)^{-1}, l(i)^{-1}$, 
\begin{itemize}
\item[($\ast$)] {\it At least one of $\{k(i)\}_{i\in\N}, \{l(i)\}_{i\in\N}$ must be unbounded; if $\{l(i)\}_{i\in\N}$ (resp. $\{k(i)\}_{i\in\N}$) is bounded, $\{k(i)\}_{i\in\N}$ (resp. $\{l(i)\}_{i\in\N}$) is unbounded and $\norm v_{k(i)}(t,\cdot)\norm_{L^2(\Omega)^3}\to\infty$ (resp. $\norm v_{l(i)}(t,\cdot)\norm_{L^2(\Omega)^3}\to\infty$) as $i\to\infty$. }
\end{itemize}
%Hereafter, if $\{k(i)\}_{i\in\N}$ or $\{l(i)\}_{i\in\N}$ is unbounded, we always consider its unbounded subsequences.  

Normalize $v_{k(i)}(t,\cdot),v_{l(i)}(t,\cdot)$ as   
\begin{eqnarray}\label{ooo}
&&\omega_{i}^1:=\frac{v_{k(i)}(t,\cdot)}{\norm v_{k(i)}(t,\cdot)\norm_{H^1(\Omega)^3}+\norm v_{l(i)}(t,\cdot)\norm_{H^1(\Omega)^3}+k(i)^{-1}+l(i)^{-1}},\\\label{oooo}
&&\omega_{i}^2:=\frac{v_{l(i)}(t,\cdot)}{\norm v_{k(i)}(t,\cdot)\norm_{H^1(\Omega)^3}+\norm v_{l(i)}(t,\cdot)\norm_{H^1(\Omega)^3}+k(i)^{-1}+l(i)^{-1}}.  
%=\frac{u_{\alpha(m(l))}^{n_{\alpha(m(l))}+1}}{\norm v_{\alpha(m(l))}(t,\cdot)\norm+\norm v_{\beta(m(l))}(t,\cdot)\norm}:\Omega_{h{\alpha(m(l))}}\to\R^3.
\end{eqnarray}
Since $\{\omega^1_{i}\}_{i\in\N}$, $\{\omega^2_{i}\}_{i\in\N}$ are bounded sequences of $H^1_0(\Omega)^3$, it follows from  the Rellich-Kondrachov theorem that there exist $\omega^1,\omega^2\in L^2(\Omega)^3$ such that
\begin{eqnarray}\label{6015}
\quad \mbox{$\omega^1_{i}\to \omega^1$,\,\, $\omega^2_{i}\to \omega^2$ strongly in $L^2(\Omega)^3$ as $i\to\infty$ (up to subsequences).}
 \end{eqnarray}
We show that $\omega^1,\omega^2\in H^1_{0,\sigma}(\Omega)$. For $j=1,2,3$, since $\{\p_{x_j}\omega^1_{i}\}_{i\in\N}$ is a bounded sequence of $L^2(\Omega)^3$, we have $w^1_j\in L^2(\Omega)^3$ such that for any $\phi\in C^\infty_{0}(\Omega)$,
\begin{eqnarray*}
&&\int_\Omega \p_{x_j}\omega^1_{i} \phi dx=-\int_\Omega\omega^1_{i}  \p_{x_j}\phi dx\to  
\int_\Omega w^1_j\phi dx
=-\int_\Omega\omega^1 \p_{x_j}\phi dx\\
&& \mbox{as $i\to\infty$ (up to a subsequence)},
\end{eqnarray*}
which implies that $\omega^1\in H^1(\Omega)^3$ and $w^1_j=\p_{x_j}\omega^1$. In particular, $\{\omega^1_{i}\}_{i\in\N}$ weakly converges to $\omega^1$ in $H^1(\Omega)^3$ as $i\to\infty$ (up to a subsequence), i.e.,  
$$(\omega^1_i,\varphi)_{H^1(\Omega)^3}\to(\omega^1,\varphi)_{H^1(\Omega)^3}\mbox{ as $i\to\infty$},\quad\forall\,\varphi\in H^1(\Omega)^3.$$
Since $\{\omega^1_{i}\}_{i\in\N}$ is a bounded sequence of  $H^1_{0,\sigma}(\Omega)$, we have $\tilde{\omega}^1\in H^1_{0,\sigma}(\Omega)$ such that $\omega_i\wto\tilde{\omega}^1$ in $H^1_{0,\sigma}(\Omega)$ as $i\to\infty$ (up to a subsequence), i.e., 
$$(\omega^1_i,\varphi)_{H^1(\Omega)^3}\to(\tilde{\omega}^1,\varphi)_{H^1(\Omega)^3}\mbox{ as $i\to\infty$},\quad\forall\,\varphi\in H^1_{0,\sigma}(\Omega).$$
Hence, we have 
\begin{eqnarray*}
&&0=(\omega^1_i, \varphi)_{H^1(\Omega)^3}-(\omega^1_i, \varphi)_{H^1(\Omega)^3}\to 0=(\omega^1-\tilde{\omega}^1, \varphi)_{H^1(\Omega)^3}\mbox{ as $i\to\infty$},\mbox{\quad $\forall\, \varphi\in H^1_{0,\sigma}(\Omega)$}.
\end{eqnarray*}
Therefore, noting that $\omega^1_i-\tilde{\omega}^1\in H^1_{0,\sigma}(\Omega)$ and $\omega^1-\tilde{\omega}^1\in H^1(\Omega)^3$, we have 
\begin{eqnarray*}
0&=&(\omega^1-\tilde{\omega}^1,  \omega^1_i-\tilde{\omega}^1)_{H^1(\Omega)^3}
=(\omega^1-\tilde{\omega}^1,  \omega^1-\tilde{\omega}^1)_{H^1(\Omega)^3}
+ (\omega^1-\tilde{\omega}^1,  \omega^1_i-\omega^1)_{H^1(\Omega)^3}\\
&&\to 0=\norm\omega^1-\tilde{\omega}^1\norm_{H^1(\Omega)^3}^2
\mbox{\quad as $i\to\infty$.}
\end{eqnarray*}
This implies that $\omega^1=\tilde{\omega}^1\in  H^1_{0,\sigma}(\Omega)$. The same reasoning yields  $\omega^2 \in H^1_{0,\sigma}(\Omega)$.
 
It follows from \eqref{6161} that 
\begin{eqnarray}\label{key1}
2&\ge&\norm \omega^1_{i}- \omega^2_{i} \norm_{L^2(\Omega)^3}> \lambda_0 +i\Big(\sup_{\varphi\in S}\Big|\Big( \rho_{k(i)}(t,\cdot)\omega^1_i-\rho_{l(i)}(t,\cdot)\omega^2_i,\varphi \Big)_{L^2(\Omega)^3}\Big|\Big)\\\nonumber 
&& +i\frac{k(i)^{-1}+l(i)^{-1}}{\norm v_{k(i)}(t,\cdot)\norm_{H^1(\Omega)^3} +\norm v_{l(i)}(t,\cdot)\norm_{H^1(\Omega)^3}+k(i)^{-1}+l(i)^{-1}}\\\nonumber
&\ge&\lambda_0>0,\mbox{ $\forall\,i\in\N$},
\end{eqnarray}
which implies that 
\begin{eqnarray}\label{key2}
&&\sup_{\varphi\in S}\Big|\Big( \rho_{k(i)}(t,\cdot)\omega^1_i-\rho_{l(i)}(t,\cdot)\omega^2_i,\varphi \Big)_{L^2(\Omega)^3}\Big|\to 0\mbox{\quad as $i\to\infty$.}
\end{eqnarray} 
 For each $\varphi\in C^\infty_{0,\sigma}(\Omega)$ with  $\norm \varphi \norm_{W^{3,\infty}(\Omega)^3} =1$, we obtain if  $\{k(i)\}_{i\in\N},\{l(i)\}_{i\in\N}$ are both unbounded,  
\begin{eqnarray}\label{unun}
&&\sup_{\varphi\in S}\Big|\Big( \rho_{k(i)}(t,\cdot)\omega^1_i-\rho_{l(i)}(t,\cdot)\omega^2_i,\varphi \Big)_{L^2(\Omega)^3}\Big|\\\nonumber
&&\quad \ge \Big|( \rho_{k(i)}(t,\cdot) \omega^1_{i},  \varphi)_{L^2(\Omega)^3}
-(\rho_{l(i)}(t,\cdot) \omega^2_{i},  \varphi)_{L^2(\Omega)^3}\Big|\\\nonumber
&&\quad = \Big|( \rho_{k(i)}(t,\cdot),  \omega^1\cdot  \varphi)_{L^2(\Omega)}
+( \rho_{k(i)}(t,\cdot), (\omega^1_{i}- \omega^1)  \cdot\varphi)_{L^2(\Omega)}\\\nonumber
&&\qquad -(\rho_{l(i)}(t,\cdot) , \omega^2 \cdot\varphi)_{L^2(\Omega)}
-(\rho_{l(i)}(t,\cdot), (\omega^2_{i}-\omega^2)  \cdot\varphi)_{L^2(\Omega)}
\Big|\\\nonumber
&&\qquad \to\Big|    (\rho(t,\cdot),\omega^1\cdot\varphi)_{L^2(\Omega)}-  (\rho(t,\cdot),\omega^2\cdot\varphi)_{L^2(\Omega)} \Big|\mbox{ as $i\to\infty$ (due to (A3))};
\end{eqnarray}
 otherwise, due to the statement $(\ast)$, either $\omega^1=0$ or $\omega^2=0$ and \eqref{unun} still holds. 
Hence, setting $\omega:=\omega^1-\omega^2$, we obtain with \eqref{6015},  (\ref{key1}) and (\ref{key2}),       
$$ 0<\lambda_0\le\norm \omega\norm_{L^2(\Omega)^3},\quad(\rho(t,\cdot)\omega,\varphi)_{L^2(\Omega)^3}=0,\mbox{ \,\,\,$\forall\,\varphi\in  C^\infty_{0,\sigma}(\Omega)$.}$$
The first inequality implies $\omega\neq0$. However,  since $\omega\in H^1_{0,\sigma}(\Omega)$, we take $\{\omega_l\}_{l\in\N}\subset C^\infty_{0,\sigma}(\Omega)$ that approximates $\omega$ in the $H^1(\Omega)^3$-norm as $l\to\infty$ and find    
\begin{eqnarray*}
\int_\Omega \rho(t,x)|\omega(x)|^2dx&=&(\rho(t,\cdot)\omega,\omega)_{L^2(\Omega)^3}=(\rho(t,\cdot)\omega,\omega_l)_{L^2(\Omega)^3}+(\rho(t,\cdot)\omega,\omega-\omega_l)_{L^2(\Omega)^3}\\
&=&(\rho(t,\cdot)\omega,\omega-\omega_l)_{L^2(\Omega)^3}\to0\mbox{\quad as $l\to\infty$}.
\end{eqnarray*}
Since $0<\alpha\le\rho(t,\cdot)\le \beta$ by assumption (A1), we conclude $\omega=0$, which is  a contradiction.  Therefore, there exists $A_\lambda=A_\lambda(t)\ge0$ for each $t\in[0,T]$. 

We show that  there exists  $A_\lambda\ge0$ independent of the choice of  $t\in[0,T]$.  Fix any $\lambda>0$. 
Let $A^\ast_\lambda(t)$ be the infimum of $\{A_\lambda\,|\, \mbox{\eqref{key5555} holds} \}$ for each fixed $t$. We will see  that $A^\ast_\lambda(\cdot)$ is bounded on $[0,T]$. 
Suppose that  $A^\ast_\lambda(\cdot)$ is not bounded. Then, we find a sequence $\{s_i\}_{i\in\N}\subset[0,T]$ for which $A_\lambda^\ast(s_i)\nearrow\infty$ as $i\to\infty$.  Set $A_i:=A^\ast_\lambda(s_i)/2$. For each $i\in \N$, there exists $k(i),l(i)$ for which we have   
\begin{eqnarray*}
 &&\norm v_{k(i)}(s_i,\cdot)-v_{l(i)}(s_i,\cdot)\norm_{L^2(\Omega)^3}\\
 &&\quad > \lambda (\norm v_{k(i)}(s_i,\cdot)\norm_{H^1(\Omega)^3}+\norm v_{l(i)}(s_i,\cdot)\norm_{H^1(\Omega)^3}+k(i)^{-1}+l(i)^{-1})\\
&&\qquad +A_i \Big(\sup_{\varphi}\Big|\Big( \rho_{k(i)}(s_i,\cdot)v_{k(i)}(s_i,\cdot)-\rho_{l(i)}(s_i,\cdot)v_{l(i)}(s_i,\cdot),\varphi \Big)_{L^2(\Omega)^3}\Big|+k(i)^{-1}+l(i)^{-1}\Big). 
%\norm w^\alpha_{m(l)}(t_l)- w^\beta_{m(l)}(t_l) \norm_{L^2(\Omega)^3}> \eta (\norm  w^1_{m(l)}(t_l)\norm+\norm  w^2_{m(l)}(t_l) \norm)+B_l \norm  w^1_{m(l)}(t_l)- w^2_{m(l)}(t_l) \norm_{\rm op}.
\end{eqnarray*}
Note that $A_i\nearrow\infty$ as $i\to\infty$ and $\{s_i\}_{i\in\N}$ converges to some $t^\ast\in[0,T]$ as $i\to\infty$ (up to a subsequence). Then, we may follow the same reasoning as the first half of our proof  and reach a contradiction. In fact, we obtain the limit functions $\omega^1$, $\omega^2$ of $\{\omega^1_i\}_{i\in\N}$, $\{\omega^2_i\}_{i\in\N}$ defined by \eqref{ooo}, \eqref{oooo} with $s_i$ in place of $t$, where we note that $\{\omega^1_i\}_{i\in\N}$, $\{\omega^2_i\}_{i\in\N}$ are still sequences of $H^1_{0,\sigma}(\Omega)$; $ \omega=\omega^1-\omega^2$ satisfies  $0<\lambda\le\norm\omega\norm_{L^2(\Omega)^3}$ and  $(\rho(t^\ast,\cdot)\omega,\varphi)_{L^2(\Omega)^3}=0$ for all $\varphi\in  C^\infty_{0,\sigma}(\Omega)$ as  
 \begin{eqnarray*}
 &&\sup_{\varphi\in S}\Big|\Big( \rho_{k(i)}(s_i,\cdot)\omega^1_i-\rho_{l(i)}(s_i,\cdot)\omega^2_i,\varphi \Big)_{L^2(\Omega)^3}\Big|\to 0\mbox{ as $i\to\infty$},\\
&&\sup_{\varphi\in S}\Big|\Big( \rho_{k(i)}(s_i,\cdot)\omega^1_i-\rho_{l(i)}(s_i,\cdot)\omega^2_i,\varphi \Big)_{L^2(\Omega)^3}\Big|\\
&&\quad \ge \Big|( \rho_{k(i)}(s_i,\cdot) \omega^1_{i},  \varphi)_{L^2(\Omega)^3}
-(\rho_{l(i)}(s_i,\cdot) \omega^2_{i},  \varphi)_{L^2(\Omega)^3}\Big|\\
&&\quad = \Big|\Big( \rho_{k(i)}(t^\ast,\cdot),  \omega^1\cdot  \varphi\Big)_{L^2(\Omega)}
+\Big( \rho_{k(i)}(s_i,\cdot)- \rho_{k(i)}(t^\ast,\cdot),  \omega^1\cdot\varphi\Big)_{L^2(\Omega)}\\
&&\qquad +\Big(\rho_{k(i)}(s_i,\cdot), (\omega_i^1-\omega^1)\cdot \varphi\Big)_{L^2(\Omega)}
-\Big( \rho_{l(i)}(t^\ast,\cdot),  \omega^2 \cdot \varphi\Big)_{L^2(\Omega)}\\
&&\qquad -\Big( \rho_{l(i)}(s_i,\cdot)- \rho_{l(i)}(t^\ast,\cdot),  \omega^2\cdot\varphi\Big)_{L^2(\Omega)}
 -\Big(\rho_{l(i)}(s_i,\cdot), (\omega_i^2-\omega^2\cdot) \varphi\Big)_{L^2(\Omega)}
\Big|\\
&& \to\Big|    (\rho(t^\ast,\cdot),\omega^1\cdot\varphi)_{L^2(\Omega)}-  (\rho(t^\ast,\cdot),\omega^2\cdot\varphi)_{L^2(\Omega)} \Big|\mbox{ as $i\to\infty$ (due to (A3))},
\end{eqnarray*}
where we use  the statement $(\ast)$ and weak equi-continuity of $\{\rho_k\}_{k\in\N}$ with smooth approximation of $\omega^1$ and $\omega^2$, i.e., 
for any $\ep>0$ take $\omega_\ep\in C^\infty_0(\Omega)$  such that $\norm \omega_\ep-\omega^1\norm_{L^2(\Omega)^3}<\ep$ and observe 
\begin{eqnarray*}
&&\Big|\Big( \rho_{k(i)}(s_i,\cdot)- \rho_{k(i)}(t^\ast,\cdot),  \omega^1\cdot\varphi\Big)_{L^2(\Omega)}\Big|\le \Big|\Big( \rho_{k(i)}(s_i,\cdot)- \rho_{k(i)}(t^\ast,\cdot),  \omega_\ep\cdot\varphi\Big)_{L^2(\Omega)}\Big|\\
&&\qquad +\Big|\Big( \rho_{k(i)}(s_i,\cdot)- \rho_{k(i)}(t^\ast,\cdot),  (\omega^1-\omega_\ep)\cdot\varphi\Big)_{L^2(\Omega)}\Big|\\
&&\quad \le  \Big|\Big( \rho_{k(i)}(s_i,\cdot)- \rho_{k(i)}(t^\ast,\cdot),  \omega_\ep\cdot\varphi\Big)_{L^2(\Omega)}\Big|+2\beta \ep \to 2\beta \ep\mbox{ as $i\to\infty$}.
\end{eqnarray*}
Thus, we reach a contradiction and complete the proof.
\end{proof}
%%%%%%%%%%%%%%%%
%%%%%%%%%%%%%%%%%%%
\setcounter{section}{4}
\setcounter{equation}{0}
\section{Convergence}
For each $\tau>0$, we interpolate the solution of the time-discrete problem as $\rho_\tau,\tilde{\rho}_\tau:[0,\infty)\times\Omega\to\R$, $v_\tau:[0,\infty)\times\Omega\to\R^3$,  
\begin{eqnarray*}
&&\rho_\tau(t,x):=\rho^{n+1}(x)\mbox{\quad for $t\in(\tau n,\tau n+\tau]$},\quad \rho_\tau(0,x):=\rho_\tau(\tau,x),\\ 
&&\tilde{\rho}_\tau(t,x):=\rho^n(x)+\frac{\rho^{n+1}(x)-\rho^{n}(x)}{\tau}(t-\tau n)\mbox{\quad for $t\in[\tau n,\tau n+\tau]$},\\
&&v_\tau(t,x):=v^{n+1}(x) \mbox{\quad for $t\in(\tau n,\tau n+\tau]$},\quad v_\tau(0,x):=v_\tau(\tau,x). 
\end{eqnarray*}
Let $\{\tau_k\}_{k\in\N}$ be a sequence such that $\tau_k\to0^+$ as $k\to\infty$. We re-write $\rho_{\tau_k}, \tilde{\rho}_{\tau_k},v_{\tau_k}$ as $\rho_k,\tilde{\rho}_k, v_k$. 
Let $T>0$ be an arbitrary number. 
Restricting $(t,x)$ to $[0,T]\times\Omega$, we investigate convergence (up to a subsequence) of $\{\rho_{k}, v_{k}\}_{k\in\N}$  in order to obtain a weak $[0,T]$-solution of \eqref{NS1}, where $\rho_{k}|_{[0,T]\times\Omega}, \tilde{\rho}_{k}|_{[0,T]\times\Omega},v_{k}|_{[0,T]\times\Omega} $ are still denoted by $\rho_{k}, \tilde{\rho}_{k},v_{k}$; then, we discuss existence of  a global weak solution.  
 
Here are properties of $\{ \rho_k\}_{k\in\N},\{ \tilde{\rho}_k\}_{k\in\N}, \{ v_k\}_{k\in\N}$ instantly seen from Section 3: 
{\it \begin{itemize}
\item Proposition \ref{P32} implies that $m\le \rho_k\le M$, $m\le \tilde{\rho}_k\le M$ for all $k$ and $\int_\Omega \rho_k(t,x)dx=\int_\Omega \tilde{\rho}_k(t,x)dx= \int_\Omega \eta_{\tau_k}(x)dx $ for all $t\in[0,T]$ and $k$.
\item \eqref{bbb2} implies that there exists a constant $C_1(T)$ such that $\norm \rho_k\norm_{L^\infty([0,T]; H^1(\Omega))}\le C_1(T)$ for all $k$.
\item Proposition \ref{P31}, \eqref{P333}, \eqref{333333bbbbb} and  \eqref{bbb3} imply that  $\rho_k\in L^2([0,T]; H^2_N(\Omega))$ for all $k$ and   there exists a constant $C_2(T)$ such that $\norm \rho_k\norm_{L^2([0,T]; H^2(\Omega))}\le C_2(T)$ for all $k$.  
\item \eqref{ellip1}, \eqref{P333} and \eqref{bbb2} imply that  there exists a constant $C_3(T)$ such that 
$\norm\rho_k-\tilde{\rho}_k\norm_{L^2([0,T];L^2(\Omega))}
\le C_3(T)\tau_k $ for all $k$.
\item Proposition \ref{P34} and Proposition \ref{P35} with \eqref{333333bbbbb} imply that $v_k\in L^\infty([0,T];L^2(\Omega)^3)\cap L^2([0,T];H^1_{0,\sigma}(\Omega))$ for all $k$ and  there exist  constants $C_4(T),C_5(T)$ such that \\$\norm v_k\norm_{L^\infty([0,T]; L^2(\Omega)^3)}\le C_4(T)$ and $\norm v_k\norm_{L^2([0,T]; H^1(\Omega)^3)}\le C_5(T)$ for all $k$. 
\end{itemize}}
%%%%%%%%%%
\begin{Prop}\label{weak-convergence}
There exists a subsequence of  $\{ \rho_k,  v_k\}_{k\in\N}$, still denoted by the same symbol, and functions $$\mbox{$\rho\in L^2([0,T];H^2_N(\Omega))$, $v\in L^2([0,T];H^1_{0,\sigma}(\Omega))$, $V\in L^2([0,T];L^2(\Omega)^3)$}$$
 for which  the following weak convergence holds: 
\begin{eqnarray}\nonumber
&&\rho_k \wto \rho \mbox{ \quad \quad \quad \quad \quad \quad in $L^2([0,T];L^2(\Omega))$ as $k\to\infty$},\\ \nonumber%\label{512}
&&\p_{x_i}\rho_k \wto \p_{x_i}\rho \mbox{\quad\quad\quad\quad\!in $L^2([0,T];L^2(\Omega))$ as $k\to\infty$ ($i=1,2,3$)},\\\nonumber
&&\p_{x_i}\p_{x_j}\rho_k \wto \p_{x_i}\p_{x_j}\rho \mbox{ \quad in $L^2([0,T];L^2(\Omega))$ as $k\to\infty$ ($i,j=1,2,3$)},\\\nonumber
&&v_k \wto v \mbox{\quad\quad \quad\quad\quad\quad\,\! in $L^2([0,T];L^2(\Omega)^3)$ as $k\to\infty$},\\\nonumber
&&\p_{x_i}v_k \wto \partial_{x_i} v \mbox{\quad\quad\quad\quad in $L^2([0,T];L^2(\Omega)^3)$ as $k\to\infty$ ($i=1,2,3$)},\\\nonumber
&&\rho_k v_k \wto V\mbox{\quad\quad\quad\quad\quad\,in $L^2([0,T];L^2(\Omega)^3)$ as $k\to\infty$}.
\end{eqnarray} 
\end{Prop}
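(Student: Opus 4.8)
Every one of the six weak convergences is obtained by the same mechanism: a bounded sequence in a reflexive (in fact Hilbert) space has a weakly convergent subsequence, and bounded linear maps between such spaces are weakly sequentially continuous. The only care needed is to perform the extractions one after another so that all six statements hold along a single common subsequence, and to check that the structural constraints ($\rho(t,\cdot)\in H^2_N(\Omega)$ and $v(t,\cdot)\in H^1_{0,\sigma}(\Omega)$) are inherited by the weak limits. I would carry this out in three stages.

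\textbf{Step 1 (density).} By the bullet list preceding the statement, $\{\rho_k\}_{k\in\N}$ is bounded in $L^2([0,T];H^2(\Omega))$ by $C_2(T)$, and each $\rho_k$ lies in $L^2([0,T];H^2_N(\Omega))$. Since $L^2([0,T];H^2(\Omega))$ is a Hilbert space, I extract a subsequence (not relabelled) with $\rho_k\wto\rho$ weakly in $L^2([0,T];H^2(\Omega))$ for some $\rho$. The restriction map and the maps $w\mapsto\p_{x_i}w$ and $w\mapsto\p_{x_i}\p_{x_j}w$ from $L^2([0,T];H^2(\Omega))$ into $L^2([0,T];L^2(\Omega))$ are bounded linear, hence weakly continuous, which yields the first three displayed convergences. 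Finally $L^2([0,T];H^2_N(\Omega))$ is a closed, hence convex-closed, hence weakly closed, subspace of $L^2([0,T];H^2(\Omega))$ (closedness comes from continuity of the trace $H^2(\Omega)\ni w\mapsto\nabla w\cdot\nu|_{\p\Omega}$ together with $0<\alpha\le\rho_k\le\beta$ not being needed here), so $\rho\in L^2([0,T];H^2_N(\Omega))$.

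\textbf{Step 2 (velocity).} Along the subsequence already fixed, $\{v_k\}_{k\in\N}$ is bounded in $L^2([0,T];H^1_{0,\sigma}(\Omega))$ by $C_5(T)$, another Hilbert space; I extract a further subsequence with $v_k\wto v$ weakly there, so $v\in L^2([0,T];H^1_{0,\sigma}(\Omega))$, and weak continuity of $v\mapsto\p_{x_i}v$ gives $\p_{x_i}v_k\wto\p_{x_i}v$ in $L^2([0,T];L^2(\Omega)^3)$. This settles the fourth and fifth displays.

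\textbf{Step 3 (the product) and the obstacle.} Because $m\le\rho_k\le M$ and $\norm v_k\norm_{L^\infty([0,T];L^2(\Omega)^3)}\le C_4(T)$, we have $\norm\rho_k v_k\norm_{L^2([0,T];L^2(\Omega)^3)}\le M\sqrt{T}\,C_4(T)$, so $\{\rho_k v_k\}_{k\in\N}$ is bounded in the Hilbert space $L^2([0,T];L^2(\Omega)^3)$; extracting one last subsequence gives $\rho_k v_k\wto V$ weakly, which is the sixth display. Honestly, this proposition presents no serious obstacle: the real work is deferred. One must \emph{not} assert here that $V=\rho v$, since $\rho_k$ and $v_k$ only converge weakly; identifying $V$ with $\rho v$ (and upgrading $v_k$ to strong $L^2$-convergence) is exactly what Section 5 will do via the compactness bound of Lemma \ref{key-lemma2}, whose hypotheses (A1)–(A4) are made verifiable precisely by the bounds and the weak limits collected in the present statement. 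The only point deserving attention is bookkeeping of subsequences: the three extractions are nested, so all six convergences hold along the final subsequence, and the limit $\rho$ is literally the same object throughout.
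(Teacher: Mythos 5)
Your proposal is correct and follows essentially the same route as the paper: nested weak-compactness extractions in Hilbert spaces, with the structural constraints ($H^2_N$, $H^1_{0,\sigma}$) recovered for the limits and the product $\rho_k v_k$ handled by a separate extraction without identifying $V=\rho v$. The only (harmless) difference is presentational — you extract once in $L^2([0,T];H^2(\Omega))$ and invoke weak continuity of the derivative maps and weak closedness of closed subspaces, whereas the paper extracts the limits of $\rho_k$, $\p_{x_i}\rho_k$, $\p_{x_i}\p_{x_j}\rho_k$ separately in $L^2([0,T];L^2(\Omega))$, identifies them distributionally, and then reconciles with a second extraction in $L^2([0,T];H^2_N(\Omega))$.
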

%%%%%%%%%%
\begin{proof}
Since  $\{\rho_k\}_{k\in\N}$, $\{\p_{x_i}\rho_k\}_{k\in\N}$, $\{\p_{x_i}\p_{x_j}\rho_k\}_{k\in\N}$ are bounded in $L^2([0,T];L^2(\Omega))$, there is a subsequence $\{a_1(k)\}_{k\in\N}\subset\N$ and $\rho, r^i, r^{ij}\in L^2([0,T];L^2(\Omega))$ such that $\rho_{a_1(k)}\wto \rho$, $\p_{x_i}\rho_{a_1(k)}\wto r^i$, $\p_{x_i}\p_{x_j}\rho_{a_1(k)}\wto r^{ij}$ in $L^2([0,T];L^2(\Omega))$ as $k\to\infty$. This implies that  $\int_0^t\int_\Omega \rho_{a_1(k)}\p_{x_i}\phi dxdt=-\int_0^t\int_\Omega \p_{x_i}\rho_{a_1(k)}\phi dxdt\to \int_0^t\int_\Omega \rho\p_{x_i}\phi dxdt=-\int_0^t\int_\Omega r^i\phi dxdt$ as $k\to\infty$ for all 
 $\phi\in C^\infty([0,T]\times\Omega;\R)$ with supp$(\phi)\subset(0,T)\times\Omega$. Hence, we have $r^i=\p_{x_i}\rho$. Similarly observation shows $r^{ij}=\p_{x_i}\p_{x_j}\rho$, as well as $\rho_{a_1(k)}\wto \rho$ in $L^2([0,T];H^2(\Omega))$ as $k\to\infty$.   On the other hand,   $\{\rho_{a_1(k)}\}_{k\in\N}$ is bounded in the Hilbert space $L^2([0,T];H^2_N(\Omega))$, there is a subsequence $\{a_2(k)\}_{k\in\N}\subset \{a_1(k)\}_{k\in\N}$ and $\bar{\rho}\in L^2([0,T];H^2_N(\Omega))$ such that $\rho_{a_2(k)}\wto \bar{\rho}$ in $L^2([0,T];H^2_N(\Omega))$ as $k\to\infty$. Therefore, for any $\phi\in L^2([0,T];H^2_N(\Omega))$, it holds that $0=  (\rho_{a_2(k)},\phi)_{L^2([0,T];H^2(\Omega))}- (\rho_{a_2(k)},\phi)_{L^2([0,T];H^2(\Omega))}\to (\rho-\bar{\rho},\phi)_{L^2([0,T];H^2(\Omega))}=0$ as $k\to\infty$. Since $\rho_{a_2(k)}-\bar{\rho}\in L^2([0,T];H^2_N(\Omega))$, we have 
 \begin{eqnarray*}
0&=&(\rho-\bar{\rho},\rho_{a_2(k)}-\bar{\rho})_{L^2([0,T];H^2(\Omega))}\\
&=&(\rho-\bar{\rho},\rho-\bar{\rho})_{L^2([0,T];H^2(\Omega))}+(\rho-\bar{\rho},\rho_{a_2(k)}-\rho)_{L^2([0,T];H^2(\Omega))}\\
&\to&\norm \rho-\bar{\rho}\norm_{L^2([0,T];H^2(\Omega))}\mbox{\quad as $k\to\infty$,}
\end{eqnarray*}
 which means $\rho=\bar{\rho}\in L^2([0,T];H^2_N(\Omega))$.  

Since $\{v_{a_2(k)}\}_{k\in\N}$, $\{\p_{x_i}v_{a_2(k)}\}_{k\in\N}$ are bounded in $L^2([0,T];L^2(\Omega)^3)$, there is a subsequences $\{ a_3(k) \}_{k\in\N}\subset \{a_2(k)\}_{k\in\N}$ and  $v, w^i\in L^2([0,T];L^2(\Omega)^3)$ such that $v_{a_3(k)}\weakto v$ and $\p_{x_i}v_{a_3(k)}\weakto w^i$ in $L^2([0,T];L^2(\Omega)^3)$ as $k\to\infty$.  A reasoning similar to the above shows that $\p_{x_i}v=w^i$ and $v\in L^2([0,T];H^1_{0,\sigma}(\Omega))$.  
%This implies that $\int_0^t\int_\Omega v_j\p_{x_i}\varphi dxdt=-\int_0^t\int_\Omega w^i_j\phi dxdt$ for all 
% $\phi\in C^\infty([0,T]\times\Omega;\R)$ with supp$(\phi)\subset(0,T)\times\Omega$, and we have $\p_{x_i}v_j=w^i_j$ as well as the weak convergence $v_k\weakto v$ in $L^2([0,T];H^1(\Omega)^3)$ as $k\to\infty$.
%On the other hand, since $\{v_k\}_{k\in\N}$ is bounded in the Hilbert space $L^2([0,T];H^1_{0,\sigma}(\Omega))$, we have a subsequence still denoted by  $\{v_k\}_{k\in\N}$ and $\bar{v}\in L^2([0,T];H^1_{0,\sigma}(\Omega))$ such that  $v_k\weakto \bar{v}$ in $L^2([0,T];H^1_{0,\sigma}(\Omega))$ as $k\to\infty$. Hence, for any $\psi\in L^2([0,T];H^1_{0,\sigma}(\Omega))$, it holds that
%$(v-\bar{v},\psi)_{L^2([0,T];H^1(\Omega)^3)} =0$. 
%Since $v_k-\bar{v}\in L^2([0,T];H^1_{0,\sigma}(\Omega))$, we see that 
%\begin{eqnarray*}
%0&=&(v-\bar{v},v_k-\bar{v})_{L^2([0,T];H^1(\Omega)^3)}\\
%&=&(v-\bar{v},v-\bar{v})_{L^2([0,T];H^1(\Omega)^3)}+(v-\bar{v},v_k-v)_{L^2([0,T];H^1(\Omega)^3)}\\
%&\to&\norm v-\bar{v}\norm_{L^2([0,T];H^1(\Omega)^3)}\mbox{\quad as $k\to\infty$,}
%\end{eqnarray*}
%which means $v=\bar{v}\in L^2([0,T];H^1_{0,\sigma}(\Omega))$. 

Since $\{\rho_{a_3(k)}v_{a_3(k)}\}_{k\in\N}$ is bounded in $L^2([0,T];L^2(\Omega)^3)$, there exists  a subsequence $\{ a_4(k) \}_{k\in\N}\subset \{a_3(k)\}_{k\in\N}$ and  $V\in L^2([0,T];L^2(\Omega)^3)$ such that $\rho_{a_4(k)}v_{a_4(k)}\weakto V$  in $L^2([0,T];L^2(\Omega)^3)$ as $k\to\infty$.

 We conclude that $\{\rho_{a_4(k)},v_{a_4(k)}\}_{k\in\N}$ is the desired subsequence. 
\end{proof}
%%%%%%%

We discuss strong convergence. Let $\{\rho_k,v_k\}_{k\in\N}$ be the subsequence mentioned in Proposition \ref{weak-convergence}. 
With $\{a_4(k)\}_{k\in\N}$ defined in the proof of Proposition \ref{weak-convergence}, $\{\tilde{\rho}_{a_4(k)}\}_{k\in\N}$ is also denoted by $\{\tilde{\rho}_k\}_{k\in\N}$.
The discrete parameter corresponding to $\rho_k, \tilde{\rho}_k, v_k$ is denoted by $\tau_k$. 
%%%%%%%%%%%%%%%
\begin{Prop}\label{P41}
 $\{\tilde{\rho}_k\}_{k\in\N}$ being seen as the sequence of $\tilde{\rho}_k:[0,T]\to L^2(\Omega)$, $k\in\N$ is weakly equi-continuous in the sense that for each $\phi\in C^\infty_0(\Omega)$, $\{(\tilde{\rho}_k,\phi)_{L^2(\Omega)}\}_{k\in\N}$ is equi-continuous on $[0,T]$. 
\end{Prop}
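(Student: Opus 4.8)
The plan is to test $\p_t\tilde{\rho}_k$ against a fixed $\phi\in C^\infty_0(\Omega)$, to transfer onto $\phi$ the spatial derivatives produced by \eqref{ellip1}, and then to read off from the a priori bounds gathered just before Proposition \ref{weak-convergence} an estimate for $s\mapsto(\p_t\tilde{\rho}_k(s,\cdot),\phi)_{L^2(\Omega)}$ in $L^2([0,T])$ that is uniform in $k$; a Cauchy-Schwarz argument then upgrades this to uniform H\"older-$\2$ continuity of $s\mapsto(\tilde{\rho}_k(s,\cdot),\phi)_{L^2(\Omega)}$, which is stronger than the claimed equi-continuity. I do not expect a genuine obstacle here; the one point needing care is that the spatial derivative must be shifted onto the smooth $\phi$ \emph{before} the a priori bounds are used, together with the bookkeeping of which bound ($L^\infty H^1$ for $\rho$, $L^\infty L^2$ for $v$) controls which term.

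First, each $\tilde{\rho}_k$ is piecewise linear in $t$ as an $L^2(\Omega)$-valued curve, so $t\mapsto(\tilde{\rho}_k(t,\cdot),\phi)_{L^2(\Omega)}$ is piecewise linear and, for $0\le t_1\le t_2\le T$,
\[
(\tilde{\rho}_k(t_2,\cdot)-\tilde{\rho}_k(t_1,\cdot),\phi)_{L^2(\Omega)}=\int_{t_1}^{t_2}\Big(\frac{\rho^{n(s)+1}-\rho^{n(s)}}{\tau_k},\,\phi\Big)_{L^2(\Omega)}\,ds,\qquad n(s):=\lfloor s/\tau_k\rfloor .
\]
Using the first line of \eqref{ellip1}, $\frac{\rho^{n+1}-\rho^{n}}{\tau_k}=-v^n_{\tau_k}\cdot\nabla\rho^{n+1}+\theta\Delta\rho^{n+1}$, and integrating by parts against $\phi$ (every boundary integral vanishes since $\phi\in C^\infty_0(\Omega)$, $v^n_{\tau_k}\in C^\infty_{0,\sigma}(\Omega)$ and $\nabla\cdot v^n_{\tau_k}=0$), one gets
\[
\Big(\frac{\rho^{n+1}-\rho^{n}}{\tau_k},\,\phi\Big)_{L^2(\Omega)}=\int_\Omega\rho^{n+1}\,(v^n_{\tau_k}\cdot\nabla\phi)\,dx-\theta\int_\Omega\nabla\rho^{n+1}\cdot\nabla\phi\,dx,
\]
hence, by $0<m\le\rho^{n+1}\le M$,
\[
\Big|\Big(\frac{\rho^{n+1}-\rho^{n}}{\tau_k},\,\phi\Big)_{L^2(\Omega)}\Big|\le\big(M\norm v^n_{\tau_k}\norm_{L^2(\Omega)^3}+\theta\norm\nabla\rho^{n+1}\norm_{L^2(\Omega)^3}\big)\,\norm\nabla\phi\norm_{L^2(\Omega)^3}.
\]
Testing with $\phi$ first is essential: $\p_t\tilde{\rho}_k$ itself carries $v^n_{\tau_k}$ in $L^\infty(\Omega)$, whose norm is not uniformly bounded in $k$, whereas $\norm v^n_{\tau_k}\norm_{L^2(\Omega)^3}$ is (from $\norm v^n_{\tau_k}-v^n\norm_{H^1(\Omega)^3}\le\tau_k$, $v^0_{\tau_k}=0$ and $\norm v_k\norm_{L^\infty([0,T];L^2(\Omega)^3)}\le C_4(T)$).

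Now, $\norm\rho_k\norm_{L^\infty([0,T];H^1(\Omega))}\le C_1(T)$ and $\norm v_k\norm_{L^\infty([0,T];L^2(\Omega)^3)}\le C_4(T)$ (used on a slightly enlarged time interval so as to cover the endpoint $s=T$) give $\norm\nabla\rho^{n+1}\norm_{L^2(\Omega)^3}\le C_1(T)$ and $\norm v^n_{\tau_k}\norm_{L^2(\Omega)^3}\le C_4(T)+\tau_k$ for all $n$ with $\tau_k n\le T$; since there are at most $T/\tau_k+1$ such indices, putting $\psi_k(s):=\big(\frac{\rho^{n(s)+1}-\rho^{n(s)}}{\tau_k},\phi\big)_{L^2(\Omega)}$ one obtains
\[
\int_0^T|\psi_k(s)|^2\,ds\le 2\norm\nabla\phi\norm_{L^2(\Omega)^3}^2\sum_{\tau_k n\le T}\big(M^2\norm v^n_{\tau_k}\norm_{L^2(\Omega)^3}^2+\theta^2\norm\nabla\rho^{n+1}\norm_{L^2(\Omega)^3}^2\big)\tau_k\le C(\phi,T)^2
\]
with $C(\phi,T)$ independent of $k$. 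Finally, Cauchy-Schwarz in $s$ yields, for all $0\le t_1\le t_2\le T$ and all $k$,
\[
\big|(\tilde{\rho}_k(t_2,\cdot)-\tilde{\rho}_k(t_1,\cdot),\phi)_{L^2(\Omega)}\big|\le\int_{t_1}^{t_2}|\psi_k(s)|\,ds\le|t_2-t_1|^{\2}\Big(\int_0^T|\psi_k(s)|^2\,ds\Big)^{\2}\le C(\phi,T)\,|t_2-t_1|^{\2},
\]
which is the desired (uniform) equi-continuity on $[0,T]$.
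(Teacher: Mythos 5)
Your proof is correct and follows essentially the same route as the paper's: test the discrete time-derivative coming from \eqref{ellip1} against the fixed $\phi\in C^\infty_0(\Omega)$, integrate by parts to shift the spatial derivatives onto $\phi$, and invoke the uniform a priori bounds. The only (immaterial) differences are that the paper integrates the diffusion term by parts twice, so that only $m\le\rho^{n+1}\le M$ and the $L^\infty_t L^2_x$ bound on $v_k$ are needed and $F_k(t)=(\tilde{\rho}_k(t),\phi)_{L^2(\Omega)}$ comes out uniformly Lipschitz, whereas you stop at $\nabla\rho^{n+1}\cdot\nabla\phi$, additionally use the $L^\infty_t H^1_x$ bound on $\rho_k$, and settle for H\"older-$\frac{1}{2}$ equi-continuity via Cauchy--Schwarz in time.
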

%%%%%%%%%%%%%%%
\begin{proof}
Let $\tilde{\rho}_k$ be generated by the solutions $\rho^n_k$ of \eqref{ellip1}$_{\tau=\tau_k}$. We fix an arbitrary $\phi\in C^\infty_0(\Omega)$. Observe that for $t\in[\tau_k n,\tau_k n+\tau_k]$, 
\begin{eqnarray*}
F_k(t)&:=&(\tilde{\rho}_k(t),\phi)_{L^2(\Omega)}\\
&=&(\rho^n_k,\phi)_{L^2(\Omega)}
-(v^n_{\tau_k}\cdot \nabla \rho^{n+1}_k,\phi)_{L^2(\Omega)}(t-\tau_k n)
+\theta(\Delta \rho^{n+1}_k,\phi)_{L^2(\Omega)}(t-\tau_k n)\\
&=& (\rho^n_k,\phi)_{L^2(\Omega)}+(v^n_{\tau_k}\rho^{n+1}_k,\nabla \phi)_{L^2(\Omega)^3}(t-\tau_k n)+\theta(\rho^{n+1}_k,\Delta\phi)_{L^2(\Omega)}(t-\tau_k n), 
\end{eqnarray*}
where there exists a constant $K(\phi)$ depending only on $\phi$ such that 
$$|(v^n_{\tau_k}\rho^{n+1}_k,\nabla\phi)_{L^2(\Omega)}|\le K(\phi),\quad \theta(\rho^{n+1}_k,\Delta\phi)_{L^2(\Omega)}\le K(\phi). $$
It is clear that, if $t,s\in[\tau_k n,\tau_k n+\tau_k]$, we have $|F_k(t)-F_k(s)|\le K(\phi)|t-s|$; hence, $F_k$ is $K(\phi)$-Lipschitz continuous on $[0,T]$ for all $k$.
%If $s\in[\tau n+\tau,\tau n+2\tau]$, we have 
%\begin{eqnarray*}
%|F(t)-F(s)|&\le& |F(t)-F(\tau n+\tau)|+|F(\tau n+\tau)-F(s)|\\
%&\le& K(\phi)(\tau n+\tau- t)+K(\phi)(s- (\tau n+\tau)) 
%=K(\phi)|t-s|.
%\end{eqnarray*}
\end{proof}
%%%%%%%%%%%
Due to Proposition \ref{P41} and Lemma \ref{key-lemma1}, we  find a subsequence $\{\tilde{\rho}_{a(k)}\}_{k\in\N}$ of $\{\tilde{\rho}_k\}_{k\in\N}$ that satisfies (A1)--(A3). Let $\{\tau_{a(k)}\}_{k\in\N}$, $\{\rho_{a(k)}\}_{k\in\N}$, $\{\tilde{\rho}_{a(k)}\}_{k\in\N}$, $\{\tilde{v}_{a(k)}\}_{k\in\N}$ be re-denoted by $\{\tau_k\}_{k\in\N}$, $\{\rho_k\}_{k\in\N}$, $\{\tilde{\rho}_k\}_{k\in\N}$, $\{v_k\}_{k\in\N}$, respectively. Then, the pair $\{\tilde{\rho}_k\}_{k\in\N}$, $\{v_k\}_{k\in\N}$ satisfies (A1)--(A4) and Lemma \ref{key-lemma2} implies:    
for each $\lambda>0$, there exists a constant $A_\lambda\ge0$ such that
\begin{eqnarray*}
&& \norm v_k(t,\cdot)-v_l(t,\cdot) \norm_{L^2(\Omega)^3)}\le \lambda(\norm v_k(t,\cdot)\norm_{H^1(\Omega)^3}+\norm v_l(t,\cdot)\norm_{H^1(\Omega)^3} +k^{-1}+l^{-1})\\\nonumber
&&\qquad  +A_\lambda\Big( \sup_{\varphi\in S}\Big|\Big( \tilde{\rho}_k(t,\cdot)v_k(t,\cdot)-\tilde{\rho}_l(t,\cdot)v_l(t,\cdot),\varphi \Big)_{L^2(\Omega)^3}\Big| +k^{-1}+l^{-1} \Big),\\
&&\mbox{for all $ t\in[0,T]$ and all $ k,l\in\N$}.
 \end{eqnarray*}
For each $\varphi\in S$, we have  
\begin{eqnarray*}
&&\Big|\Big( \tilde{\rho}_k(t,\cdot)v_k(t,\cdot)-\tilde{\rho}_l(t,\cdot)v_l(t,\cdot),\varphi \Big)_{L^2(\Omega)^3}-\Big( \rho_k(t,\cdot)v_k(t,\cdot)-\rho_l(t,\cdot)v_l(t,\cdot),\varphi \Big)_{L^2(\Omega)^3}\Big|\\
&&\le \norm \tilde{\rho}_k(t,\cdot)- \rho_k(t,\cdot)\norm_{L^2(\Omega)}\norm v_k(t,\cdot)\norm_{L^2(\Omega)^3}+\norm \tilde{\rho}_l(t,\cdot)- \rho_l(t,\cdot)\norm_{L^2(\Omega)}\norm v_l(t,\cdot)\norm_{L^2(\Omega)^3},\\
&&\sup_{\varphi\in S}\Big|\Big(\tilde{\rho}_k(t,\cdot)v_k(t,\cdot)-\tilde{\rho}_l(t,\cdot)v_l(t,\cdot),\varphi \Big)_{L^2(\Omega)^3}\Big|\\
&&\quad \le  \sup_{\varphi\in S}\Big|\Big(\rho_k(t,\cdot)v_k(t,\cdot)-\rho_l(t,\cdot)v_l(t,\cdot),\varphi \Big)_{L^2(\Omega)^3}\Big|\\
&&\qquad +C_4(T) ( \norm \tilde{\rho}_k(t,\cdot)- \rho_k(t,\cdot)\norm_{L^2(\Omega)}+ \norm \tilde{\rho}_l(t,\cdot)- \rho_l(t,\cdot)\norm_{L^2(\Omega)}),  
\end{eqnarray*} 
 and hence,
\begin{eqnarray*}
&&\Big[\int_0^T \Big\{\sup_{\varphi\in S}\Big|\Big(\tilde{\rho}_k(t,\cdot)v_k(t,\cdot)-\tilde{\rho}_l(t,\cdot)v_l(t,\cdot),\varphi \Big)_{L^2(\Omega)^3}\Big|\Big\}^2 dt\Big]^\2\\
&&\le \Big[\int_0^T\Big\{ \sup_{\varphi\in S}\Big|\Big(\rho_k(t,\cdot)v_k(t,\cdot)-\rho_l(t,\cdot)v_l(t,\cdot),\varphi \Big)_{L^2(\Omega)^3}\Big| \Big\}^2dt\Big]^\2\\
&&\qquad +  C_4(T) ( \norm \tilde{\rho}_k- \rho_k\norm_{L^2([0,T];L^2(\Omega))}+ \norm \tilde{\rho}_l- \rho_l\norm_{L^2([0,T];L^2(\Omega))})  \\
 &&\le \Big[\int_0^T\Big\{ \sup_{\varphi\in S}\Big|\Big(\rho_k(t,\cdot)v_k(t,\cdot)-\rho_l(t,\cdot)v_l(t,\cdot),\varphi \Big)_{L^2(\Omega)^3}\Big| \Big\}^2dt\Big]^\2
 +C_3(T)C_4(T)(\tau_k+\tau_l).
\end{eqnarray*} 
 Therefore, we obtain
\begin{eqnarray}\label{4key555}
&& \norm v_k-v_l \norm_{L^2([0,T];L^2(\Omega)^3)}\le \lambda(\norm v_k\norm_{L^2([0,T];H^1(\Omega)^3}+\norm v_l\norm_{L^2([0,T];H^1(\Omega)^3} )\\\nonumber
&&\qquad + \Big[\int_0^T\Big\{ \sup_{\varphi\in S}\Big|\Big(\rho_k(t,\cdot)v_k(t,\cdot)-\rho_l(t,\cdot)v_l(t,\cdot),\varphi \Big)_{L^2(\Omega)^3}\Big| \Big\}^2dt\Big]^\2
\\\nonumber
&&\qquad +(k^{-1}+l^{-1})(\lambda+A_\lambda)\sqrt{T}+C_3(T)C_4(T)(\tau_k+\tau_l).
 \end{eqnarray}
%%%%%%%%%%%%% 
\begin{Prop}\label{strong-convergence1}
Let $v$ be the weak limit of $\{v_k\}_{k\in\N}$ mentioned in Proposition \ref{weak-convergence}. It holds that $\{v_k\}_{k\in\N}$ converges to $v$ strongly  in $L^2([0,T];L^2(\Omega)^3)$ as $k\to\infty$.  Furthermore,  $v\in L^\infty([0,T],L^2(\Omega)^3)$.  
\end{Prop}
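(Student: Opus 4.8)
The plan is to exploit the estimate \eqref{4key555} that has just been established. Fix $\lambda>0$; then $A_\lambda$ is fixed, the last line of the right-hand side of \eqref{4key555} tends to $0$ as $k,l\to\infty$, and the first line is bounded by $2\lambda\,C_5(T)$. Consequently, if we can show that along a subsequence
\begin{eqnarray}\label{plan-mid}
J_{k,l}:=\int_0^T\Big\{\sup_{\varphi\in S}\Big|\Big(\rho_k(t,\cdot)v_k(t,\cdot)-\rho_l(t,\cdot)v_l(t,\cdot),\varphi\Big)_{L^2(\Omega)^3}\Big|\Big\}^2 dt\longrightarrow 0\q\mbox{as }k,l\to\infty,
\end{eqnarray}
then \eqref{4key555} yields $\limsup_{k,l\to\infty}\norm v_k-v_l\norm_{L^2([0,T];L^2(\Omega)^3)}\le 2\lambda\,C_5(T)$ for every $\lambda>0$, so $\{v_k\}_{k\in\N}$ is a Cauchy sequence in $L^2([0,T];L^2(\Omega)^3)$; its strong limit must coincide with the weak limit $v$ of Proposition \ref{weak-convergence}, which is the first claim. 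For the second claim, $\norm v_k(t,\cdot)\norm_{L^2(\Omega)^3}\le C_4(T)$ for all $t$ and $k$ (Proposition \ref{P35} with \eqref{333333bbbbb}), so passing to an a.e.-convergent subsequence of $\{v_k\}_{k\in\N}$ and applying Fatou's lemma gives $v\in L^\infty([0,T];L^2(\Omega)^3)$.

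To prove \eqref{plan-mid}, let $Y$ denote the dual of the closure of $C^\infty_{0,\sigma}(\Omega)$ in $W^{1,\infty}(\Omega)^3$; then the expression in braces in \eqref{plan-mid} equals $\norm\rho_k(t,\cdot)v_k(t,\cdot)-\rho_l(t,\cdot)v_l(t,\cdot)\norm_Y$, so \eqref{plan-mid} amounts to $\{\rho_k v_k\}_{k\in\N}$ being a Cauchy sequence (along a subsequence) in $L^2([0,T];Y)$. Since $\rho_k v_k\wto V$ in $L^2([0,T];L^2(\Omega)^3)$ and $L^2(\Omega)^3$ embeds continuously into $Y$, such a limit is necessarily $V$, so it suffices to prove that $\{\rho_k v_k\}_{k\in\N}$ is relatively compact in $L^2([0,T];Y)$. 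I would obtain this by an Aubin--Lions--Simon argument (a piecewise-constant-in-time version as in \cite{Soga2023}), resting on three ingredients: (i) $L^2(\Omega)^3$ embeds \emph{compactly} into $Y$, because $S$ is uniformly bounded and equi-Lipschitz, hence precompact in $C(\bar\Omega)^3$ by the Arzel\`a--Ascoli theorem, so any $L^2(\Omega)^3$-bounded, weakly convergent sequence is $\norm\cdot\norm_Y$-convergent; (ii) $\{\rho_k v_k\}_{k\in\N}$ is bounded in $L^\infty([0,T];L^2(\Omega)^3)$ since $m\le\rho_k\le M$ and $\norm v_k\norm_{L^\infty([0,T];L^2(\Omega)^3)}\le C_4(T)$; (iii) the discrete time-difference quotients of $\rho_k v_k$ are bounded in $L^2([0,T];Y)$ uniformly in $k$.

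Point (iii) is the crux, and is precisely where ``the weak norm is evaluated by inserting the Navier--Stokes part''. Testing the discrete momentum equation \eqref{ellip2} against an arbitrary $\varphi\in C^\infty_{0,\sigma}(\Omega)$ with $\norm\varphi\norm_{W^{1,\infty}(\Omega)^3}=1$ and using the vector-calculus identities \eqref{div1}--\eqref{div4} to transfer every spatial derivative onto $\varphi$, one expresses $\big((\rho^{n+1}_k v^{n+1}_k-\rho^{n}_k v^{n}_k)/\tau_k,\varphi\big)_{L^2(\Omega)^3}$ as a sum of the convective term $\sum_j(\rho^{n+1}_k v^n_{\tau_k j}v^{n+1}_k,\p_{x_j}\varphi)_{L^2(\Omega)^3}$, the viscosity bilinear form in $\nabla v^{n+1}_k$ and $\nabla\varphi$, the $\theta$- and $\theta^2$-terms (each an integral of $\nabla\rho^{n+1}_k$, resp.\ $\nabla\rho^{n+1}_k$ and $v^{n+1}_k$, against $\nabla\varphi$), and the forcing $(\rho^{n+1}_k f^{n+1},\varphi)_{L^2(\Omega)^3}$; the decisive point is that, by virtue of \eqref{div1}--\eqref{div4}, \emph{no} second-order derivative of $\rho^{n+1}_k$ and \emph{no} second-order derivative of $\varphi$ survive. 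Hence $\norm(\rho^{n+1}_k v^{n+1}_k-\rho^{n}_k v^{n}_k)/\tau_k\norm_Y\le C\,\sigma_n$, with $\sigma_n$ a combination of $\norm\nabla v^{n+1}_k\norm_{L^2}$, $\norm\nabla\rho^{n+1}_k\norm_{L^2}$, $\norm v^{n+1}_k\norm_{L^2}$, $\norm v^n_{\tau_k}\norm_{L^2}$ and $\norm f^{n+1}\norm_{L^2}$ (the convective term controlled via the uniform $L^\infty([0,T];L^2)$ bounds on the velocities, or via $H^1(\Omega)\hookrightarrow L^4(\Omega)$). Summing $\sum_{n:\,\tau_k n<T}\sigma_n^2\,\tau_k$, each piece is bounded uniformly in $k$ by the estimates of Section 3: $\sum_n\norm\nabla v^{n+1}_k\norm_{L^2}^2\tau_k\le C(T)$ from \eqref{bbb3}, $\sup_{n,k}\norm\nabla\rho^{n+1}_k\norm_{L^2}\le C(T)$ from \eqref{bbb2}, $\norm v^{n+1}_k\norm_{L^2},\norm v^n_{\tau_k}\norm_{L^2}\le C(T)$ from \eqref{bbb1} and \eqref{333333bbbbb}, and $\sum_n\norm f^{n+1}\norm_{L^2}^2\tau_k\le\norm f\norm_{L^2([0,T+\tau_k];L^2(\Omega)^3)}^2$. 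This delivers the uniform $L^2([0,T];Y)$ bound on the difference quotients and hence, with (i)--(ii), the relative compactness of $\{\rho_k v_k\}_{k\in\N}$ in $L^2([0,T];Y)$, proving \eqref{plan-mid}. The main obstacle is exactly this book-keeping: for each $\theta$- and $\theta^2$-term one must decide whether to absorb a factor into $\sup_{n,k}\norm\nabla\rho^{n+1}_k\norm_{L^2}$ or to keep it inside a $\tau_k$-weighted time sum handled by Cauchy--Schwarz, so that every term closes against \eqref{P333} and \eqref{bbb1}--\eqref{bbb3}; the remainder is the standard (piecewise-constant) Aubin--Lions--Simon machinery together with the elementary compact embedding $L^2(\Omega)^3\hookrightarrow\hookrightarrow Y$.
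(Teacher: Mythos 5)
Your proposal is correct and follows essentially the same route as the paper: both reduce the claim via \eqref{4key555} to showing that $\{\rho_k v_k\}_{k\in\N}$ is Cauchy in the weak norm $\sup_{\varphi\in S}|(\cdot,\varphi)_{L^2(\Omega)^3}|$, both control the discrete time-derivative of $\rho_k v_k$ by testing \eqref{ellip2} against $\varphi\in S$ so that, thanks to \eqref{div1}--\eqref{div4}, only first derivatives appear and the Section 3 estimates close the bound, and both identify the limit through the weak convergence $\rho_k v_k\weakto V$. The only difference is presentational: where you invoke a piecewise-constant Aubin--Lions--Simon compactness theorem for the chain $L^2(\Omega)^3\hookrightarrow\hookrightarrow Y$, the paper carries out that step by hand, splitting $\rho_k^{n_k+1}v_k^{n_k+1}=a_k+b_k$ into a time average plus a fluctuation, proving the Cauchy property pointwise in $t$, and concluding with dominated convergence.
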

%%%%%%%%%%%%%
\begin{proof}
Since $\{v_k\}_{k\in\N}$ is bounded in $L^\infty([0,T],L^2(\Omega)^3)$, its strong convergence implies that $v\in L^\infty([0,T],L^2(\Omega)^3)$ (consider an a.e. $t$-pointwise convergent subsequence). 
 
In \eqref{4key555}, we may choose $\lambda>0$ so that  $\lambda(\norm v_k\norm_{L^2([0,T];H^1(\Omega)^3)}+\norm v_l\norm_{L^2([0,T];H^1(\Omega)^3)})$ is arbitrarily small independently from $k,l\in\N$. Due to Lebesgue's dominated convergence theorem, the following pointwise convergence  
\begin{eqnarray}\label{454332}
\qquad  \sup_{\varphi\in S}\Big|\Big(\rho_k(t,\cdot)v_k(t,\cdot)-\rho_l(t,\cdot)v_l(t,\cdot),\varphi \Big)_{L^2(\Omega)^3}\Big| \to0 \mbox{ as $k,l\to\infty$,   $\forall\,t\in(0,T)$ }
 \end{eqnarray} 
implies that $\{v_k\}_{k\in\N}$ is a Cauchy sequence in $L^2([0,T];L^2(\Omega)^3)$ to conclude our assertion. 

We prove \eqref{454332} through the discrete time-derivative of $\rho_kv_k$ and $\rho_lv_l$.   Fix an arbitrary $t\in(0,T)$. Let $n_k\in \N$ be such that $t\in(\tau_k n_k,\tau_kn_k+\tau_k]$. For $\tilde{t}\in(t,T)$, let $\tilde{n}_k\in \N$ be such that $\tilde{t}\in(\tau_k \tilde{n},\tau_k\tilde{n}_k+\tau_k]$. We have 
$$0<\tau_{k}(\tilde{n}_{k}-n_{k})-\tau_{k}\le \tilde{t}-t\le \tau_{k}(\tilde{n}_{k}-n_{k})+\tau_{k}$$
 for all sufficiently large $k,l$. Later, $\tilde{t}$ is appropriately taken to be close enough to $t$.  
The time-discrete solutions that give the step functions $\rho_k,v_k$ are denoted by $\rho^n_k$, $v^n_k$, while $f^{n+1}_k$ denotes \eqref{333fff} with $\tau=\tau_k$.   Define   
\begin{eqnarray*}
a_{k}&:=&\frac{1}{\tau_{k}(\tilde{n}_{k}-n_{k})}\sum_{n=n_{k}+1}^{\tilde{n}_{k}} \rho_{k}^{n+1}v_{k}^{n+1} \tau_{k},\\
b_{k}&:=&\frac{1}{\tau_{k}(\tilde{n}_{k}-n_{k})}\!\!\sum_{n=n_{k}+1}^{\tilde{n}_{k}}\tau_{k}\{(n-1)-\tilde{n}_{k}\} \frac{\rho_{k}^{n+1}v_{k}^{n+1}-\rho_{k}^{n}v^{n}_{k} }{\tau_{k}} \tau_{k}\\
&=&\frac{1}{\tilde{n}_{k}-n_{k}}\sum_{n=n_{k}+1}^{\tilde{n}_{k}}\Big[ (n-\tilde{n}_{k}) \rho_{k}^{n+1}v_{k}^{n+1}- \{(n-1)-\tilde{n}_{k}\}\rho_{k}^{n}v^{n}_{k} \Big]
-a_{k},  
\end{eqnarray*}
which leads to 
$$\rho_{k}^{n_{k}+1}v_{k}^{n_{k}+1}=a_{k}+b_{k}.$$ 
We introduce $n_{l}$, $\tilde{n}_{l}$, $a_{l}$ and  $b_{l}$ in the same way with the same $t$ and $\tilde{t}$, to have $\rho_{l}^{n_{l}+1}v_{l}^{n_{l}+1}=a_{l}+b_{l}$. 
Fix an arbitrary $\varphi \in S$. We have 
\begin{eqnarray*}
&&\Big|\Big(\rho_k(t,\cdot)v_k(t,\cdot)-\rho_l(t,\cdot)v_l(t,\cdot),\varphi \Big)_{L^2(\Omega)^3}\Big| 
= \Big|\Big(\rho^{n_k+1}_kv^{n_k+1}_k-\rho^{n_l+1}_lv^{n_l+1}_l,\varphi \Big)_{L^2(\Omega)^3}\Big| \\
&&\le |(a_k-a_l,\varphi )_{L^2(\Omega)^3}|
+|(b_k,\varphi )_{L^2(\Omega)^3}|
+|(b_l,\varphi )_{L^2(\Omega)^3}|.
\end{eqnarray*}
We will show that $|(b_k,\varphi )_{L^2(\Omega)^3}|$ can be arbitrarily small as $\tilde{t}\to t$ independently from $k$ and  the choice of $\varphi\in S$. Hereafter,  $M_1,M_2,\ldots$ are some constants independent of $t$, $\tilde{t}$, $k$ and $\varphi\in S$.   Using \eqref{ellip2} in the form of \eqref{weak31}, we get  
\begin{eqnarray*}
&&|(b_k,\varphi )_{L^2(\Omega)^3}|
\le \sum_{n=n_{k}+1}^{\tilde{n}_{k}}\Big|\Big( \frac{\rho_{k}^{n+1}v_{k}^{n+1}-\rho_{k}^{n}v^{n}_{k} }{\tau_{k}} ,\varphi\Big)_{L^2(\Omega)^3}\Big|\tau_{k} \\
&&\le  \underline{ \sum_{n=n_{k}+1}^{\tilde{n}_{k}}
\sum_{j=1}^3\Big|\int_\Omega \rho^{n+1}_k v^n_{\tau_k j} v^{n+1}_{ki}\p_{x_j} \varphi_i dx\Big|\tau_{k} }_{\rm R_1}\\
&&\quad  +\underline{\sum_{n=n_{k}+1}^{\tilde{n}_{k}}\Big| \2\sum_{i,j=1}^3\int_\Omega \mu(\rho^{n+1}_k) (\p_{x_j} v^{n+1}_{ki}+\p_{x_i} v^{n+1}_{kj}  ) (\p_{x_j} \varphi_{i}+\p_{x_i} \varphi_{j} ) dx\Big|\tau_{k} }_{\rm R_2} \\
&&\quad +  \underline{\sum_{n=n_{k}+1}^{\tilde{n}_{k}}\Big| \theta\sum_{i,j=1}^3\int_\Omega \{ (\p_{x_i}\rho^{n+1}_k)v^{n+1}_{kj}(\p_{x_i}\varphi_j)+ (\p_{x_j}\rho^{n+1}_k)v^{n+1}_{ki}(\p_{x_i}\varphi_j)     
  \}dx\Big|\tau_{k}}_{\rm R_3}\\
&&\quad +  \underline{\sum_{n=n_{k}+1}^{\tilde{n}_{k}}\Big| 2\theta \sum_{i,j=1}^3\int_\Omega  \frac{\mu'(\rho^{n+1}_k)}{\rho^{n+1}_k} (\p_{x_i}\rho^{n+1}_k)(\p_{x_j}\rho^{n+1}_k)(\p_{x_j}\varphi_i)dx\Big|\tau_{k}}_{\rm R_4}\\
&&\quad +  \underline{\sum_{n=n_{k}+1}^{\tilde{n}_{k}}\Big|\theta^2\sum_{i,j=1}^3\int_\Omega\frac{1}{\rho^{n+1}_k}
 (\p_{x_i}\rho^{n+1}_k)(\p_{x_j}\rho^{n+1}_k)(\p_{x_i}\varphi_j)dx\Big|\tau_{k}}_{\rm R_5}\\
&&\quad  + \underline{ \sum_{n=n_{k}+1}^{\tilde{n}_{k}}\Big| \int_\Omega\rho^{n+1}_k f^{n+1}_k
\cdot\varphi dx\Big|\tau_{k} }_{\rm R_6}.
\end{eqnarray*}
By the results of Section 3, we see that 
\begin{eqnarray*}
&&\!\!\!R_1
%=\sum_{n=n_{k}+1}^{\tilde{n}_{k}}\sum_{i,j=1}^3\Big| \int_\Omega \rho^{n+1}_k v^n_{\tau_k j} v^{n+1}_{ki}\p_{x_j} \varphi_i dx\Big|\tau_{k} 
\le M_1(\tilde{t}-t),\\
&&\!\!\!R_2
%= \sum_{n=n_{k}+1}^{\tilde{n}_{k}}\Big| \2\sum_{i,j=1}^3\int_\Omega \mu(\rho^{n+1}_k) (\p_{x_j} v^{n+1}_{ki}+\p_{x_i} v^{n+1}_{kj}  ) (\p_{x_j} \varphi_{i}+\p_{x_i} \varphi_{j} ) dx  \Big|\tau_{k} \\
%&&\quad 
 \le    M_2\sum_{n=n_{k}+1}^{\tilde{n}_{k}} 
\sum_{i,j=1}^3 \norm \p_{x_j} v^{n+1}_{ki}\norm_{L^2(\Omega)}\tau_{k}
\le M_2 \sum_{i,j=1}^3 \norm \p_{x_j} v_{ki}\norm_{L^2([0,T];L^2(\Omega))}\sqrt{\tilde{t}-t}\\
&&\quad \le M_3\sqrt{\tilde{t}-t};
%\\
%&&\!\!\!R_3
%=\sum_{n=n_{k}+1}^{\tilde{n}_{k}}\Big| \theta\sum_{i,j=1}^3\int_\Omega \{ (\p_{x_i}\rho^{n+1}_k)v^{n+1}_{kj}(\p_{x_i}\varphi_j)+ (\p_{x_j}\rho^{n+1}_k)v^{n+1}_{ki}(\p_{x_i}\varphi_j)     
 % \}dx\Big|\tau_{k}\\
 %&& \quad 
% \le M_4(\tilde{t}-t),\\
%&&\!\!\!R_4
%=\sum_{n=n_{k}+1}^{\tilde{n}_{k}}\Big| 2\theta\sum_{i,j=1}^3 \int_\Omega  \frac{\mu'(\rho^{n+1}_k)}{\rho^{n+1}_k} (\p_{x_i}\rho^{n+1}_k)(\p_{x_j}\rho^{n+1}_k)(\p_{x_j}\varphi_i)dx\Big|\tau_{k}
%\le M_5(\tilde{t}-t),\\
%&&\!\!\!R_5
%=\sum_{n=n_{k}+1}^{\tilde{n}_{k}}\Big|  
%\theta^2\sum_{i,j=1}^3\int_\Omega\frac{1}{\rho^{n+1}_k}
% (\p_{x_i}\rho^{n+1}_k)(\p_{x_j}\rho^{n+1}_k)(\p_{x_i}\varphi_j)dx\Big|\tau_{k}
 %\le M_6(\tilde{t}-t),\\
%&&\!\!\!R_6\le  M_7(\tilde{t}-t).
\end{eqnarray*}
in the same way, $R_3\le M_4 (\tilde{t}-t)$,  $R_4\le M_5 (\tilde{t}-t)$,  $R_5\le M_6 (\tilde{t}-t)$,  $R_6\le M_7 \sqrt{\tilde{t}-t}$. 
Hence, for any $\ep>0$, we may choose $\tilde{t}>t$ so that  $|(b_k,\varphi )_{L^2(\Omega)^3}|<\ep$ holds for all $\varphi\in S$ and $k\in\N$.   
The same reasoning yields $|(b_l,\varphi )_{L^2(\Omega)^3}|<\ep$. 

With this $\tilde{t}$ and the weak limit $V$ of $\{\rho_kv_k\}_{k\in\N}$ from Proposition \ref{weak-convergence}, we see that 
\begin{eqnarray*}
&&|(a_k-a_l,\varphi)_{L^2(\Omega)^3}|\\
&&\le\Big| \frac{1}{\tilde{t}-t}\int_{t}^{\tilde{t}} ( \rho_k(s,\cdot)v_k(s,\cdot),\varphi)_{L^2(\Omega)^3} -\frac{1}{\tilde{t}-t}\int_t^{\tilde{t}} (V(s,\cdot),\varphi)_{L^2(\Omega)^3} ds\Big|\\
&&\quad +\Big|\frac{1}{\tilde{t}-t}\int_t^{\tilde{t}} (V(s,\cdot),\varphi)_{L^2(\Omega)^3} ds
- \frac{1}{\tilde{t}-t}\int_{t}^{\tilde{t}} ( \rho_l(s,\cdot)v_l(s,\cdot),\varphi)_{L^2(\Omega)^3}
 \Big|+ M_8\frac{\tau_k+\tau_l}{\tilde{t}-t}\\
 &&\to0 \mbox{ as $l,k\to\infty$},
\end{eqnarray*}
where this convergence is uniform with respect to $\varphi\in S$. Thus, we conclude \eqref{454332} and complete the proof. 
\end{proof}
%%%%%%%%%%%
In order to take care of the nonlinearity of $\p_{x_i}\rho$ \eqref{NS1}, we need to  prove the strong convergence of  $\{\p_{x_i}\rho_k\}_{k\in\N}$ in $L^2([0,T];L^2(\Omega))$ for $i=1,2,3$. This issue is done with the interpolation inequality: {\it for each $\lambda>0$, there exists a constant $\tilde{A}_\lambda$ such that  
\begin{eqnarray}\label{4AL}
\norm g\norm_{L^2(\Omega)}\le\lambda \norm g\norm_{H^1(\Omega)}+\tilde{A}_\lambda\sup_{\phi\in\tilde{S}} |(g,\phi)_{L^2(\Omega)}|,\quad\forall\,g\in H^1(\Omega),
\end{eqnarray}
where $\tilde{S}:=\{\phi\in C^\infty_0(\Omega)\,|\,\norm \phi\norm_{W^{1,\infty}(\Omega)}=1\}$.} Note that \eqref{4AL} is an example of interpolation inequalities appearing in the Aubin-Lions lemma (see, e.g., Chapter 3.2 in \cite{Temam-book}), which is proven in a similar way to the proof of Lemma  \ref{key-lemma2} with the fact that $C^\infty_0(\Omega)$ is dense in $L^2(\Omega)$.
%%%%%%%%%
%%%%%%%%%%%%% 
\begin{Prop}\label{strong-convergence2}
Let $\rho$ be the weak limit of $\{\rho_k\}_{k\in\N}$ mentioned in Proposition \ref{weak-convergence}. It holds that $\{\rho_k\}_{k\in\N}$, $\{\p_{x_i}\rho_k\}_{k\in\N}$  ($i=1,2,3$) converge to $\rho$, $\p_{x_i}\rho$, respectively, strongly  in $L^2([0,T];L^2(\Omega))$ as $k\to\infty$. Furthermore,  $\rho\in L^\infty([0,T],H^1(\Omega))$.     
\end{Prop}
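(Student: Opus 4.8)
The plan is to upgrade the weak convergence of Proposition~\ref{weak-convergence} in two stages --- first strong $L^2([0,T];L^2(\Omega))$-convergence of $\{\rho_k\}_{k\in\N}$ itself, then of $\{\p_{x_i}\rho_k\}_{k\in\N}$ via the interpolation inequality \eqref{4AL} --- and then to read off the $L^\infty$-in-time bound from the uniform $H^1$ estimate. For the first stage I would route through the piecewise-linear interpolant $\tilde\rho_k$. Since the (re-denoted) sequence $\{\tilde\rho_k\}_{k\in\N}$ satisfies (A1)--(A3), Lemma~\ref{key-lemma1} produces a function $\tilde\rho$ with $\tilde\rho_k(t,\cdot)\weakto\tilde\rho(t,\cdot)$ in $L^2(\Omega)$ for every $t\in[0,T]$ and $m\le\tilde\rho\le M$. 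Combining $\|\rho_k-\tilde\rho_k\|_{L^2([0,T];L^2(\Omega))}\le C_3(T)\tau_k\to0$ with $\rho_k\weakto\rho$ in $L^2([0,T];L^2(\Omega))$, and using that $t\mapsto(\tilde\rho_k(t,\cdot),\psi(t,\cdot))_{L^2(\Omega)}$ is dominated by $M\,{\rm vol}(\Omega)^{1/2}\|\psi(t,\cdot)\|_{L^2(\Omega)}\in L^1([0,T])$ for any $\psi\in L^2([0,T];L^2(\Omega))$, one identifies $\tilde\rho=\rho$ a.e.\ (both are weak $L^2([0,T];L^2(\Omega))$-limits of $\{\tilde\rho_k\}_{k\in\N}$). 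For each fixed $t$ the family $\{\tilde\rho_k(t,\cdot)\}_{k\in\N}$ is bounded in $H^1(\Omega)$ --- it is a convex combination of $\rho^n_k,\rho^{n+1}_k$, whose $H^1(\Omega)$-norms are $\le C_1(T)$ --- so by Rellich--Kondrachov the pointwise weak $L^2$-convergence is in fact strong: $\tilde\rho_k(t,\cdot)\to\rho(t,\cdot)$ in $L^2(\Omega)$ for every $t$. Since $\|\tilde\rho_k(t,\cdot)-\rho(t,\cdot)\|_{L^2(\Omega)}^2\le(M-m)^2{\rm vol}(\Omega)$, dominated convergence gives $\tilde\rho_k\to\rho$ in $L^2([0,T];L^2(\Omega))$, whence $\|\rho_k-\rho\|_{L^2([0,T];L^2(\Omega))}\le C_3(T)\tau_k+\|\tilde\rho_k-\rho\|_{L^2([0,T];L^2(\Omega))}\to0$.

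For the gradients, fix $i$ and apply \eqref{4AL} to $g=\p_{x_i}(\rho_k-\rho_l)(t,\cdot)\in H^1(\Omega)$ (recall $\rho_k(t,\cdot)=\rho^{n+1}_k\in H^2_N(\Omega)$) for each $t$, square, integrate over $[0,T]$, and bound $\|\p_{x_i}(\rho_k-\rho_l)(t,\cdot)\|_{H^1(\Omega)}\le\|(\rho_k-\rho_l)(t,\cdot)\|_{H^2(\Omega)}$ in the first term:
\begin{eqnarray*}
\|\p_{x_i}(\rho_k-\rho_l)\|_{L^2([0,T];L^2(\Omega))}^2
&\le& 2\lambda^2\big(\|\rho_k\|_{L^2([0,T];H^2(\Omega))}+\|\rho_l\|_{L^2([0,T];H^2(\Omega))}\big)^2\\
&&{}+2\tilde A_\lambda^2\int_0^T\Big(\sup_{\phi\in\tilde S}\big|(\p_{x_i}(\rho_k-\rho_l)(t,\cdot),\phi)_{L^2(\Omega)}\big|\Big)^2dt.
\end{eqnarray*}
The first term is $\le 2\lambda^2(2C_2(T))^2$, hence arbitrarily small uniformly in $k,l$ after choosing $\lambda$ small. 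For the second, integrating by parts ($\phi\in C^\infty_0(\Omega)$) and using $\|\phi\|_{W^{1,\infty}(\Omega)}=1$ gives $|(\p_{x_i}(\rho_k-\rho_l)(t,\cdot),\phi)_{L^2(\Omega)}|=|((\rho_k-\rho_l)(t,\cdot),\p_{x_i}\phi)_{L^2(\Omega)}|\le\|(\rho_k-\rho_l)(t,\cdot)\|_{L^1(\Omega)}\le{\rm vol}(\Omega)^{1/2}\|(\rho_k-\rho_l)(t,\cdot)\|_{L^2(\Omega)}$, so the second term is $\le 2\tilde A_\lambda^2\,{\rm vol}(\Omega)\,\|\rho_k-\rho_l\|_{L^2([0,T];L^2(\Omega))}^2\to0$ as $k,l\to\infty$ by the first stage. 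Thus $\{\p_{x_i}\rho_k\}_{k\in\N}$ is Cauchy in $L^2([0,T];L^2(\Omega))$, and its strong limit coincides with the weak limit $\p_{x_i}\rho$ of Proposition~\ref{weak-convergence}.

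Finally, $\rho_k\to\rho$ strongly in $L^2([0,T];H^1(\Omega))$, so along a subsequence $\rho_k(t,\cdot)\to\rho(t,\cdot)$ in $H^1(\Omega)$ for a.e.\ $t$, giving $\|\rho(t,\cdot)\|_{H^1(\Omega)}=\lim_k\|\rho_k(t,\cdot)\|_{H^1(\Omega)}\le C_1(T)$ for a.e.\ $t$, i.e.\ $\rho\in L^\infty([0,T];H^1(\Omega))$. I expect the only genuinely delicate points to lie in the first stage: identifying the pointwise-in-$t$ weak limit of $\{\tilde\rho_k\}_{k\in\N}$ with the global weak limit $\rho$, and the Rellich upgrade from pointwise weak to pointwise strong $L^2$-convergence of $\{\tilde\rho_k(t,\cdot)\}_{k\in\N}$. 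Once those are in place, the gradient stage reduces cleanly to the $L^2$-convergence of $\{\rho_k\}_{k\in\N}$ via the elementary bound $\sup_{\phi\in\tilde S}|((\rho_k-\rho_l)(t,\cdot),\p_{x_i}\phi)_{L^2(\Omega)}|\le{\rm vol}(\Omega)^{1/2}\|(\rho_k-\rho_l)(t,\cdot)\|_{L^2(\Omega)}$, and the last step is routine lower semicontinuity.
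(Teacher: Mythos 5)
Your proof is correct, but it takes a genuinely different route from the paper's. The paper first uses Poincar\'e's inequality (plus conservation of $\int_\Omega\rho_k\,dx$) to reduce the convergence of $\{\rho_k\}$ to that of $\{\nabla\rho_k\}$, and then, inside the interpolation inequality \eqref{4AL}, proves the pointwise-in-$t$ decay \eqref{SSSS} of the weak norm by the same Aubin--Lions--Simon device as for the velocity: the $a_k$/$b_k$ splitting into a time average plus a discrete time derivative, with the latter controlled through the equation \eqref{ellip1}. You instead establish the strong $L^2([0,T];L^2(\Omega))$-convergence of $\{\rho_k\}$ \emph{first}, by combining the pointwise-in-$t$ weak convergence $\tilde\rho_k(t,\cdot)\weakto\rho(t,\cdot)$ already extracted via Lemma \ref{key-lemma1} with the uniform $L^\infty([0,T];H^1(\Omega))$ bound and Rellich--Kondrachov (the standard subsequence principle upgrades weak to strong $L^2(\Omega)$-convergence at each $t$, and dominated convergence integrates this in time); once that is in hand, the weak-norm term in \eqref{4AL} is bounded after integration by parts simply by ${\rm vol}(\Omega)^{1/2}\|(\rho_k-\rho_l)(t,\cdot)\|_{L^2(\Omega)}$, and no recourse to the discrete equation is needed for the gradient stage. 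This works precisely because the density, unlike the velocity, enjoys a uniform-in-time $H^1$ bound; your argument is therefore more elementary and decouples the density compactness from the dynamics, while the paper's keeps the two strong-convergence proofs structurally parallel. Two small points: the identification $\tilde\rho=\rho$ holds only for a.e.\ $t$, so the pointwise strong convergence $\tilde\rho_k(t,\cdot)\to\rho(t,\cdot)$ should be asserted for a.e.\ $t$ rather than every $t$ (which is all the dominated-convergence step needs); and one should note, as you implicitly do, that you are working with the subsequence already fixed after Proposition \ref{P41}, for which (A1)--(A3) hold.
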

%%%%%%%%%%%%%
\begin{proof} 
Since $\{\p_{x_i}\rho_k\}_{k\in\N}$ is bounded in $L^\infty([0,T],L^2(\Omega))$, its strong convergence implies that $\rho\in L^\infty([0,T],H^1(\Omega))$ (consider an a.e. $t$-pointwise convergent subsequence). 

Our proof is essentially the same as the proof of Proposition \ref{strong-convergence1}. 
Poincare's inequality gives 
$\norm \rho_k-\rho_l\norm_{L^2([0,T];L^2(\Omega))}\le A_P\norm \nabla \rho_k- \nabla\rho_l\norm_{L^2([0,T];L^2(\Omega))}+\sqrt{T}\tau_k+\sqrt{T}\tau_l$, 
where we note that  $\int_\Omega(\rho_k(t,\cdot)-\rho_l(t,\cdot))dx\equiv\int_\Omega(\eta_{\tau_k}(x)-\eta_{\tau_l}(x))dx$ and $\norm\eta_{\tau_k}-\eta_l\norm_{H^1(\Omega)}\le \tau_k+\tau_l$.    
Hence, it is enough to prove the strong convergence of $\{\p_{x_i}\rho_k\}_{k\in\N}$. We apply \eqref{4AL} to $\p_{x_i}\rho_k(t,\cdot)-\p_{x_i}\rho_l(t,\cdot)$ for each $t\in[0,T]$ and $k,l\in\N$ to obtain 
\begin{eqnarray}\label{24key555} 
&&\norm \p_{x_i}\rho_k-\p_{x_i}\rho_l\norm_{L^2([0,T];L^2(\Omega))}
\le \lambda \norm \p_{x_i}\rho_k-\p_{x_i}\rho_l\norm_{L^2([0,T];H^1(\Omega))}\\\nonumber
&&\quad +\tilde{A}_\lambda\Big[\int_0^T\Big\{\sup_{\phi\in\tilde{S}}\Big|\Big(\p_{x_i}\rho_k(t,\cdot)-\p_{x_i}\rho_l(t,\cdot),\phi\Big)_{L^2(\Omega)}\Big|\Big\}^2dt\Big]^\2.  
\end{eqnarray} 
Since $ \norm \p_{x_i}\rho_k-\p_{x_i}\rho_l\norm_{L^2([0,T];H^1(\Omega))}$ is bounded independently from $k,l$, it is enough to show the following pointwise convergence  in \eqref{24key555}   
\begin{eqnarray}\label{SSSS}
&& \sup_{\phi\in\tilde{S}}\Big|\Big(\p_{x_i}\rho_k(t,\cdot)-\p_{x_i}\rho_l(t,\cdot),\phi\Big)_{L^2(\Omega)}\Big| 
=\sup_{\phi\in\tilde{S}}\Big|\Big(\rho_k(t,\cdot)-\rho_l(t,\cdot),\p_{x_i}\phi\Big)_{L^2(\Omega)}\Big| \\\nonumber
&&\qquad \to0 \mbox{ as $k,l\to\infty$,   $\forall\,t\in(0,T)$ }.
 \end{eqnarray}  
Fix an arbitrary $t\in(0,T)$. Let $n_k\in \N$ be such that $t\in(\tau_k n_k,\tau_kn_k+\tau_k]$. For $\tilde{t}\in(t,T)$, let $\tilde{n}_k\in \N$ be such that $\tilde{t}\in(\tau_k \tilde{n},\tau_k\tilde{n}_k+\tau_k]$. We have 
$$0<\tau_{k}(\tilde{n}_{k}-n_{k})-\tau_{k}\le \tilde{t}-t\le \tau_{k}(\tilde{n}_{k}-n_{k})+\tau_{k}$$
 for all sufficiently large $k,l$. Later, $\tilde{t}$ is appropriately taken to be close enough to $t$.  
The time-discrete solutions that give the step function $\rho_k$ is denoted by $\rho^n_k$.   Define   
\begin{eqnarray*}
a_{k}&:=&\frac{1}{\tau_{k}(\tilde{n}_{k}-n_{k})}\sum_{n=n_{k}+1}^{\tilde{n}_{k}} \rho_{k}^{n+1} \tau_{k},\\
b_{k}&:=&\frac{1}{\tau_{k}(\tilde{n}_{k}-n_{k})}\!\!\sum_{n=n_{k}+1}^{\tilde{n}_{k}}\tau_{k}\{(n-1)-\tilde{n}_{k}\} \frac{\rho_{k}^{n+1}-\rho_{k}^{n} }{\tau_{k}} \tau_{k}.
%&=&\frac{1}{\tilde{n}_{k}-n_{k}}\sum_{n=n_{k}+1}^{\tilde{n}_{k}}\Big[ (n-\tilde{n}_{k}) \rho_{k}^{n+1}v_{k}^{n+1}- \{(n-1)-\tilde{n}_{k}\}\rho_{k}^{n}v^{n}_{k} \Big]
%-a_{k},  
\end{eqnarray*}
which leads to 
$$\rho_{k}^{n_{k}+1}=a_{k}+b_{k}.$$ 
We introduce $n_{l}$, $\tilde{n}_{l}$, $a_{l}$ and  $b_{l}$ in the same way with the same $t$ and $\tilde{t}$, to have $\rho_{l}^{n_{l}+1}=a_{l}+b_{l}$. 
Fix an arbitrary $\phi \in \tilde{S}$. We have 
\begin{eqnarray*}
&&\Big|\Big(\rho_k(t,\cdot)-\rho_l(t,\cdot),\p_{x_i}\phi \Big)_{L^2(\Omega)}\Big| 
= \Big|\Big(\rho^{n_k+1}_k-\rho^{n_l+1}_l,\p_{x_i}\phi \Big)_{L^2(\Omega)}\Big| \\
&&\le |(a_k-a_l,\p_{x_i}\phi )_{L^2(\Omega)}|
+|(b_k,\p_{x_i}\phi )_{L^2(\Omega)}|
+|(b_l,\p_{x_i}\phi )_{L^2(\Omega)}|.
\end{eqnarray*} 
We will see that $|(b_k,\p_{x_i}\phi )_{L^2(\Omega)}|$ can be arbitrarily small as $\tilde{t}\to t$ independently from $k$ and  the choice of $\phi\in \tilde{S}$. Hereafter,  $M'_1,M'_2,M'_3$ are some constants independent of $t$, $\tilde{t}$, $k$ and $\phi\in \tilde{S}$.   Using \eqref{ellip1}, we get  
\begin{eqnarray*}
&&|(b_k,\varphi )_{L^2(\Omega)}|
\le \sum_{n=n_{k}+1}^{\tilde{n}_{k}}\Big|\Big( \frac{\rho_{k}^{n+1}-\rho_{k}^{n} }{\tau_{k}} ,\p_{x_i}\phi\Big)_{L^2(\Omega)}\Big|\tau_{k} \\
&&\le  \underline{ \sum_{n=n_{k}+1}^{\tilde{n}_{k}}
\Big|\Big( v^n_{\tau_k}\cdot\nabla\rho^{n+1}_k,\p_{x_i}\phi\Big)_{L^2(\Omega)}\Big|\tau_{k} }_{\rm R_1}
 +\underline{\sum_{n=n_{k}+1}^{\tilde{n}_{k}}\Big| \theta\Big( \Delta\rho^{n+1}_k,\p_{x_i}\phi\Big)_{L^2(\Omega)}\Big|\tau_{k} }_{\rm R_2}.
\end{eqnarray*}
By the results of Section 3, we have $R_1\le M'_1(\tilde{t}-t)$ and $R_2\le M'_2\sqrt{\tilde{t}-t}$. 
Hence, for any $\ep>0$, we may choose $\tilde{t}>t$ so that  $|(b_k,\p_{x_i}\phi )_{L^2(\Omega)}|<\ep$,  $|(b_l,\p_{x_i}\phi)_{L^2(\Omega)}|<3$ holds for all $\phi\in \tilde{S}$ and $k,l\in\N$.   
With this $\tilde{t}$ and the weak limit $\rho$ of $\rho_k$ and Proposition \ref{weak-convergence}, we see that 
\begin{eqnarray*}
&&|(a_k-a_l,\p_{x_i}\phi)_{L^2(\Omega)}|=|(\p_{x_i}a_k-\p_{x_i}a_l,\phi)_{L^2(\Omega)}|\\
&&\le\Big| \frac{1}{\tilde{t}-t}\int_{t}^{\tilde{t}} (\p_{x_i} \rho_k(s,\cdot),\phi)_{L^2(\Omega)} -\frac{1}{\tilde{t}-t}\int_t^{\tilde{t}} (\p_{x_i}\rho(s,\cdot),\phi)_{L^2(\Omega)} ds\Big|\\
&&\quad +\Big|\frac{1}{\tilde{t}-t}\int_t^{\tilde{t}} (\p_{x_i}\rho(s,\cdot),\phi)_{L^2(\Omega)} ds
- \frac{1}{\tilde{t}-t}\int_{t}^{\tilde{t}} ( \p_{x_i}\rho_l(s,\cdot),\phi)_{L^2(\Omega)}
 \Big|+ M'_3\frac{\tau_k+\tau_l}{\tilde{t}-t}\\
 &&\to0 \mbox{ as $l,k\to\infty$},
\end{eqnarray*}
where this convergence is uniform with respect to $\phi\in \tilde{S}$. Thus, we conclude \eqref{SSSS} and complete the proof. 
\end{proof}
%%%%%%%%%
%%%%%%%%%%%%%%%%%%%
\setcounter{section}{5}
\setcounter{equation}{0}
\section{Proof of main result} 
We prove Theorem \ref{main1}. Let $T>0$ be an arbitrary number. We first show that the pair of the limits $\rho,v$ obtained in Section 5 is a weak $[0,T]$-solution, and then extend it to $[0,\infty)$.
For this purpose, we convert \eqref{ellip1} and \eqref{ellip2} into weak forms. 
Let $T_\tau$ be such that $T\in[\tau T_\tau-\tau,\tau T_\tau)$. Set $t_n=\tau n$. Hereafter,  $\tilde{M}_1,\tilde{M}_2$ are some constants independent of $\tau$. 
For each $\phi\in C^\infty([0,T]\times\Omega;\R)$ with supp($\phi)\subset[0,T)\times\Omega$, we have for all sufficiently small $\tau>0$, 
\begin{eqnarray*}
&&\sum_{n=0}^{T_\tau-1}\frac{1}{\tau}(\rho^{n+1}-\rho^n)\phi(t_{n},\cdot)\tau
=\sum_{n=0}^{T_\tau-1}\frac{1}{\tau}(\rho^{n+1}\phi(t_{n+1},\cdot)
-\rho^{n}\phi(t_{n},\cdot))\tau\\
&&-\sum_{n=0}^{T_\tau-1}\rho^{n+1}\frac{1}{\tau}(\phi(t_{n+1},\cdot)-\phi(t_{n},\cdot))\tau
=-\rho^0\phi(0,\cdot)-\sum_{n=0}^{T_\tau-1}\rho^{n+1}\p_t\phi(t_{n+1},\cdot)\tau+ O(\tau),
\end{eqnarray*}
where $\phi\equiv0$ near $T$ and $\norm O(\tau)\norm_{L^2(\Omega)}\le\tilde{M}_1\tau$.   Hence,  $\int_\Omega$\eqref{ellip1}$\times \phi(t_n,\cdot)$ leads to  
\begin{eqnarray}\label{5weak1}
&& -\sum_{n=0}^{T_\tau-1}\int_\Omega\rho^{n+1}\p_t\phi(t_{n+1},\cdot)dx\tau 
+\sum_{n=0}^{T_\tau-1}\int_\Omega(v^n_\tau\cdot\nabla\rho^{n+1} -\theta\Delta\rho^{n+1})\phi(t_{n},\cdot)dx\tau\\\nonumber
&&\quad -\int_\Omega\eta_\tau\phi(0,\cdot)dx+O(\tau)=0.
\end{eqnarray}
Similarly,  for each $\varphi\in C^\infty([0,T]\times\Omega;\R^3)$ with $\nabla\cdot\varphi=0$ and supp($\varphi)\subset[0,T)\times\Omega$ we get  the weak form of \eqref{ellip2} as 
\begin{eqnarray}\label{5weak2}
&& -\sum_{n=0}^{T_\tau-1}\int_\Omega \rho^{n+1}v^{n+1}\cdot\p_t\varphi(t_{n+1},\cdot)dx\tau 
-\sum_{n=0}^{T_\tau-1} \sum_{j=1}^3\int_\Omega
\rho^{n+1}v^n_{\tau j} v^{n+1}\cdot\p_{x_j}\varphi(t_n,\cdot)dx\tau\\\nonumber 
&&+\sum_{n=0}^{T_\tau-1} \2\sum_{i,j=1}^3\int_\Omega
\mu(\rho^{n+1})(\p_{x_j}v^{n+1}_i+\p_{x_i}v^{n+1}_j)
(\p_{x_j}\varphi_i(t_{n},\cdot)+\p_{x_i}\varphi_j(t_{n},\cdot))dx\tau\\\nonumber
&&+\sum_{n=0}^{T_\tau-1} \sum_{i,j=1}^3\int_\Omega\theta\{(\p_{x_i}\rho^{n+1})v^{n+1}_j(\p_{x_i}\varphi_j(t_{n},\cdot))+(\p_{x_j}\rho^{n+1})v^{n+1}_i(\p_{x_i}\varphi_j(t_n,\cdot))\}dx\tau\\\nonumber 
&&+\sum_{n=0}^{T_\tau-1} \sum_{i,j=1}^3\int_\Omega2\theta\frac{\mu'(\rho^{n+1})}{\rho^{n+1}}(\p_{x_i}\rho^{n+1})(\p_{x_j}\rho^{n+1})(\p_{x_j}\varphi_i(t_n,\cdot))dx\tau\\\nonumber
&&-\sum_{n=0}^{T_\tau-1} \sum_{i,j=1}^3\int_\Omega\theta^2 \frac{1}{\rho^{n+1}}(\p_{x_i}\rho^{n+1})(\p_{x_j}\rho^{n+1})(\p_{x_i}\varphi_j(t_n,\cdot))dx\tau \\\nonumber 
&&-\sum_{n=0}^{T_\tau-1} \int_\Omega \rho^{n+1}f^{n+1}\cdot \varphi(t_n,\cdot)dx\tau -\int_\Omega\eta_\tau u\cdot\varphi(0,\cdot)dx+O(\tau)=0.
\end{eqnarray}
%%%%%%%%%%%%%
\begin{proof}[{\bf   Proof of Theorem \ref{main1}.}] We first prove the existence of a weak $[0,T]$-solution. 
Let $\{\rho_k\}_{k\in\N}$, $\{v_k\}_{k\in\N}$, $\rho$, $v$ be the ones mentioned in Proposition \ref{strong-convergence1} and Proposition \ref{strong-convergence2}. 
We show that the pair $\rho$, $v$ satisfies \eqref{weak1} and \eqref{weak2}. 
 
Since $ v^0_\tau=0$, $v^0=u$, $\norm v^{n+1}_\tau-v^{n+1}\norm_{H^1(\Omega)^3}\le \tau$ and $\norm \eta_\tau-\eta\norm_{H^1(\Omega)}\le \tau$, we have 
\begin{eqnarray*}
\Big|\sum_{n=0}^{T_\tau-1}\int_\Omega (v^n_\tau-v^n)\cdot\nabla\rho^{n+1} \phi(t_{n},\cdot)dx\tau\Big|
+\Big| \int_\Omega(\eta_\tau-\eta)\phi(0,\cdot)dx \Big|
\le \tilde{M}_2\tau. 
\end{eqnarray*}
Hence, \eqref{5weak1} can be re-written with $\rho_k$ and $v_k$ as 
\begin{eqnarray}\label{54321}
&&\int_0^T\int_\Omega\{ \rho_k(t,x) \p_t\phi(t,x) +(v_k(t-\tau_k,x)\cdot\nabla\rho_k(t,x) -\theta\Delta\rho_k(t,x) )\phi(t,x) \}dxdt\\\nonumber
&&\quad -\int_\Omega\eta(x)\phi(0,x)dx+O(\tau_k)=0. 
\end{eqnarray}
The weak/strong convergence of $\rho_k,\nabla \rho_k, \Delta\rho_k,v_k, \nabla v_k$ in \eqref{54321} leads to \eqref{weak1} as $k\to\infty$, where we use $\norm v(\cdot-\tau_k,\cdot)-v\norm_{L^2({\rm supp}(\phi))^3}\to0$ as $k\to\infty$ to take care of the small-time-shift in $v_k(t-\tau_k,x)$. 
In a similar way,  \eqref{5weak2} can be re-written with $\rho_k$ and $v_k$ as 
\begin{eqnarray}\label{5weak3}
&& \int_0^T\int_\Omega\Big[ -\rho_k(t,x)v_k(t,x)\cdot\p_t\varphi(t,x)  
-\sum_{j=1}\rho_k(t,x)v_{k j}(t-\tau_k,x) v_k(t,x)\cdot\p_{x_j}\varphi(t,x)\\\nonumber 
&&+ \sum_{i,j=1}^3\2
\mu(\rho_k(t,x))(\p_{x_j}v_{ki}(t,x)+\p_{x_i}v_{kj}(t,x))
(\p_{x_j}\varphi_i(t,x)+\p_{x_i}\varphi_j(t,x))\\\nonumber
&&+ \sum_{i,j=1}^3\theta\Big(\p_{x_i}\rho_k(t,x)v_{kj}(t,x)\p_{x_i}\varphi_j(t,x)+\p_{x_j}\rho_k(t,x)v_{ki}(t,x)\p_{x_i}\varphi_j(t,x)\Big)\\\nonumber 
&&+\sum_{i,j=1}^32\theta\frac{\mu'(\rho_k(t,x))}{\rho_k(t,x)}\p_{x_i}\rho_k(t,x)\p_{x_j}\rho_k(t,x)\p_{x_j}\varphi_i(t,x)\\\nonumber
&&- \sum_{i,j=1}^3\theta^2 \frac{1}{\rho_k(t,x)}\p_{x_i}\rho_k(t,x)\p_{x_j}\rho_k(t,x)\p_{x_i}\varphi_j(t,x) \Big]dxdt\\\nonumber 
&&-\sum_{n=0}^{T_{\tau_k}-1} \int_\Omega \rho^{n+1}_kf^{n+1}_k\cdot \varphi(t_n,\cdot)dx\tau_k -\int_\Omega\eta(x) u(x)\cdot\varphi(0,x)dx+O(\tau_k)=0.
\end{eqnarray}
Observe that 
\begin{eqnarray*}
&&\sum_{n=0}^{T_{\tau_k}-1} \int_\Omega \rho^{n+1}_kf^{n+1}_k\cdot \varphi(t_n,\cdot)dx\tau_k
=\sum_{n=0}^{T_{\tau_k}-1} \int_\Omega \rho^{n+1}_k(x)\Big(\int_{\tau_k n}^{\tau_k n+\tau_k} f(t,x)dt \Big)\cdot \varphi(t_n,x)dx\\
&&=\sum_{n=0}^{T_{\tau_k}-1}\int_{\tau_k n}^{\tau_k n+\tau_k}\int_\Omega \rho_k(t,x)f(t,x)\cdot \varphi(t,x)dxdt +O(\tau_k)\\
&&\to \int_0^T\int_\Omega \rho f\cdot\varphi dxdt\mbox{\quad as $k\to\infty$.}
\end{eqnarray*}
Since $\mu$ is $C^1$-smooth and $\{\rho_k\}_{k\in\N}$  converges to $\rho$ strongly in $L^2([0,T];L^2(\Omega))$ as $k\to\infty$ with $m\le \rho_k\le M$, we have $\mu(\rho_k)\to\mu(\rho)$, $\mu'(\rho_k)/\rho_k\to\mu'(\rho)/\rho$  strongly in $L^2([0,T];L^2(\Omega))$ as $k\to\infty$, where the second convergence is verified through a subsequence of $\{\rho_k\}_{k\in\N}$ that converges to $\rho$ pointwise almost everywhere.  Therefore, we conclude that the weak/strong convergence of $\rho_k,\nabla \rho_k, \Delta\rho_k,v_k, \nabla v_k$ in \eqref{5weak3} leads to \eqref{weak2} as $k\to\infty$ to obtain a weak $[0,T]$-solution of \eqref{NS1}. 

Recall that the sequences $\{\rho_k\}_{k\in\N}$, $\{ v_k\}_{k\in\N}$ that yield a weak $[0,T]$-solution of \eqref{NS1} is defined in $[0,\infty)\times\Omega$, i.e.,   $\{\rho_k|_{[0,T]\times\Omega}\}_{k\in\N}$,  $\{ v_k|_{[0,T]\times\Omega}\}_{k\in\N}$  yield the weak $[0,T]$-solution $\rho=\rho^T, v=v^T$. We may follows the above argument to obtain  subsequences  $\{\rho_{a_2(k)}\}_{k\in\N}\subset \{\rho_k\}_{k\in\N}$, $\{ v_{a_2(k)}\}_{k\in\N}\subset \{ v_k\}_{k\in\N}$    
that  yield a weak $[0,2T]$-solution $\rho^{2T}, v^{2T}$  of \eqref{NS1}, where we note that 
$$\rho^{2T}|_{[0,T]\times\Omega}= \rho^T, \quad v^{2T}|_{[0,T]\times\Omega}=v^T.$$  
In this way, for each $l\in\N$, we find subsequences  $\{\rho_{a_{l+1}(k)}\}_{k\in\N}\subset \{\rho_{a_{l}(k)}\}_{k\in\N}$, $\{v_{a_{l+1}(k)}\}_{k\in\N}$$\subset \{ v_{a_l(k)}\}_{k\in\N}$   ($a_1(k):=k$)  
that  yield a weak $[0,(l+1)T]$-solution $\rho^{(l+1)T}, v^{(l+1)T}$  of \eqref{NS1}, where we note that 
$$\rho^{(l+1)T}|_{[0,\tilde{l}T]\times\Omega}= \rho^{\tilde{l}T}, \quad v^{(l+1)T}|_{[0,\tilde{l}T]\times\Omega}=v^{\tilde{l}T},\quad \forall\,\tilde{l}\le l+1.$$  
Hence, the functions $\rho^\ast:[0,\infty)\times\Omega\to\R$,  $\rho^\ast(t,\cdot):= \rho^{lT}(t,\cdot)$ if $t\le lT$ and $v^\ast:[0,\infty)\times\Omega\to\R^3$, $v^\ast(t,\cdot):= v^{lT}(t,\cdot)$ if $t\le lT$ are well-defined. 
We conclude that the pair $\rho^\ast,v^\ast$  is a global weak solution of \eqref{NS1}.
% 
%Define $\rho_l:=\rho_{a_{l}(l)}$, $v_l:=v_{a_{l}(l)}$. Then, it is clear that for any $\tilde{T}>0$ we have  $\rho_l\to \rho^\ast|_{[0,\tilde{T}]\times\Omega}$ in $L^2([0,T];L^2(\Omega))$ and $v_l\to v^\ast|_{[0,\tilde{T}]\times\Omega}$ in $L^2([0,T];L^2(\Omega)^3)$  
\end{proof}

%%%%%%%%%%%%%%%%%

\medskip\medskip\medskip

\noindent{\bf Acknowledgement.} 
This work was written during Kohei Soga's one-year research stay  in Fachbereich Mathematik, Technische Universit\"at Darmstadt, Germany, with the grant Fukuzawa Fund (Keio Gijuku Fukuzawa Memorial Fund for the Advancement of Education and Research). He would like to express special thanks to Professor Dieter Bothe for his kind hosting in TU-Darmstadt.  
This work is also supported by JSPS Grant-in-aid for Young Scientists \#18K13443 and JSPS Grants-in-Aid for Scientific Research (C) \#22K03391. 

\medskip\medskip\medskip

\noindent{\bf Data availability.} 
Data sharing not applicable to this article as no datasets were generated or analysed during the current study.
%%%%%%%%%%%%%%%%%
\medskip\medskip

%%%%%%%%%%%%% 
%%%%%%%%%%%%%

\end{document}